\documentclass{amsart}

\usepackage{graphicx}
\usepackage{amsfonts}
\usepackage{amsmath}
\usepackage{amsthm}

\newtheorem{Thm}{Theorem}

\newtheorem{Cor}{Corollary}
\newtheorem{Lem}{Lemma}
\newtheorem{Prop}{Proposition}

\newtheorem{Def}{Definition}
\newtheorem{Rem}{Remark}

\newcommand{\q}{\mathbb{Q}}

\newcommand{\n}{\mathbb{N}}
\newcommand{\z}{\mathbb{Z}}
\newcommand{\f}{\mathbb{F}}

\begin{document}
\title{Combinatorial Aspects of Elliptic Curves}
\author{Gregg Musiker}
\thanks{This work was supported by the NSF, grant DMS-0500557}
\address{Mathematics Department, University of California, San
Diego} \email{gmusiker@math.ucsd.edu}

\date{July 20, 2007} \maketitle

\begin{abstract}
Given an elliptic curve $C$, we study here $N_k = \# C(\f_{q^k})$,
the number of points of $C$ over the finite field $\f_{q^k}$. This
sequence of numbers, as $k$ runs over positive integers, has
numerous remarkable properties of a combinatorial flavor in addition
to the usual number theoretical interpretations.  In particular we
prove that $N_k = -\mathcal{W}_k(q,-N_1)$ where $\mathcal{W}_k(q,t)$
is a $(q,t)$-analogue of the number of spanning trees of the wheel
graph. Additionally we develop a determinantal formula for $N_k$
where the eigenvalues can be explicitly written in terms of $q$,
$N_1$, and roots of unity. We also discuss here a new sequence of
bivariate polynomials related to the factorization of $N_k$, which
we refer to as elliptic cyclotomic polynomials because of their
various properties.
\end{abstract}

\tableofcontents

\maketitle  

\section{Introduction}

An interesting problem at the cross-roads between combinatorics,
number theory, and algebraic geometry, is that of counting the
number of points on an algebraic curve over a finite field. Over a
finite field, the locus of solutions of an algebraic equation is a
discrete subset, but since they satisfy a certain type of algebraic
equation this imposes a lot of extra structure beneath the surface.
One of the ways to detect this additional structure is by looking at
field extensions: the infinite sequence of cardinalities is only
dependent on a finite set of data. Specifically the number of points
over $\f_q$, $\f_{q^2}$, \dots, and $\f_{q^g}$ will be sufficient
data to determine the number of points on a genus $g$ algebraic
curve over any other algebraic field extension.  This observation
motivates the question of how the points over higher field
extensions correspond to points over the first $g$ extensions.

To see this more clearly, we specialize to the case of elliptic
curves, where $g=1$, and examine the expressions for $N_k$, the
number of points on $C$ over $\f_{q^k}$, as functions of $q$ and
$N_1$.  It follows from the well-known rationality of the zeta
function that \begin{eqnarray} \label{preCheb} N_k(q,N_1) = 1+q^k
-\alpha_1^k-\alpha_2^k,\end{eqnarray} where $\alpha_1$ and
$\alpha_2$ are the two roots of the quadratic $1-(1+q-N_1)T+qT^2.$
Additionally, we observe, see Theorem \ref{Gar}, that
\begin{eqnarray} \label{preBiv}
N_k(q,N_1)\mathrm{~are~integral~polynomials~whose~coefficients~alternate~in~sign}.\end{eqnarray}

In this paper, we use formulas arising from (\ref{preCheb}) and
(\ref{preBiv}) to connect elliptic curves to several different areas
of combinatorics. Specifically, (\ref{preCheb}) implies that the
family of polynomials $1+q^k - N_k$ are Chebyshev polynomials of the
first kind, a well-studied example of orthogonal polynomials.  In Section $4$, we describe this perspective in further detail.
Alternatively, we can interpret statement (\ref{preCheb}) as the
plethystic expression $N_k = p_k[1+q-\alpha_1-\alpha_2]$ where the
$p_k$'s are the power symmetric functions.  In summary, we exploit
both the fields of orthogonal polynomials and symmetric functions to
illustrate numerous identities involving the $N_k$'s.

Moreover, we find that the polynomial expressions for $N_k$ due to
(\ref{preBiv}) are related to a $(q,t)$-deformation of the Lucas
numbers (Theorem \ref{qtLucEll}), and also lead to a combinatorial
interpretation involving spanning trees of the wheel graph (Theorem
\ref{whser}).  Thus the aforementioned identities also indicate
properties of the Lucas numbers and spanning trees as well.

Using these new combinatorial interpretations for $N_k$, we develop further properties of this sequence,
obtaining determinantal formulas (Theorem \ref{detformu}), as well
as formulas involving a certain bivariate version of the Fibonacci
polynomials (Theorem \ref{EkasFib}).  Another surprising by-product
of our analysis is a factorization of $N_k$ into a new sequence of
polynomials, which we refer to as elliptic cyclotomic polynomials.
Both of these families of polynomials are interesting in their own
right and have numerous properties which justify their names.
We give a geometric interpretation of the elliptic cyclotomic
polynomials as Theorem \ref{geomEcyc} and close with some
combinatorial identities involving this new family of expressions.

\section{$N_k$ as an alternating sum}

\vspace{1em}The zeta function of a curve $C$ is defined to be the
exponential generating function \begin{eqnarray} \label{zetaF}
Z(C,T) = \exp\bigg(\sum_{k\geq 1} N_k {T^k \over
k}\bigg).\end{eqnarray} A result due to
Weil \cite{Weil} is that the zeta function of a curve is rational
with specific formula given as
\begin{eqnarray} \label{rationalityC} Z(C,T) =
{(1-\alpha_1T)(1-\alpha_2T)\cdots (1-\alpha_{2g}T) \over
(1-T)(1-qT)}.\end{eqnarray} Here $g$ is the genus of curve $C$, and
the numerator is sometimes written as $L(C,T)$, a degree $2g$
polynomial with integer coefficients.
Moreover when $E$ is an elliptic curve, $Z(E,T)$ can be expressed as
\begin{eqnarray*}
{1-(\alpha_1+\alpha_2)T + \alpha_1\alpha_2T^2 \over (1-T)(1-qT)}
.\end{eqnarray*} The zeta function of a curve also satisfies a
functional equation which in the elliptic case is simply equivalent
to
$$\alpha_1\alpha_2 = q.$$

Among other things, (\ref{zetaF}) and (\ref{rationalityC}) imply
that $N_k = 1 + q^k - \alpha_1^k - \alpha_2^k-\dots -
\alpha_{2g}^k$, which can be written in plethystic notation as
$p_k[1+q-\alpha_1-\alpha_2]$.  We describe symmetric functions and
plethystic notation in more depth in Section \ref{bifibsec}. In the
case that $E$ is a curve of genus one and $k=1$ we get
$$\alpha_1 + \alpha_2 = 1 + q - N_1.$$  Hence we can rewrite the zeta function
$Z(E,T)$ totally in terms of $q$ and $N_1$ and as a consequence, all
the $N_k$'s are actually dependent on these two quantities.  The
first few formulas are given below.
\begin{eqnarray*}
N_2 &=& (2+2q)N_1 - N_1^2 \\
N_3 &=& (3+3q+3q^2)N_1 - (3+3q)N_1^2 + N_1^3 \\
N_4 &=& (4+4q+4q^2+4q^3)N_1 - (6+8q+6q^2)N_1^2 + (4+4q)N_1^3 - N_1^4 \\
N_5 &=& (5+5q+5q^2+5q^3+5q^4)N_1 - (10+15q+15q^2+10q^3)N_1^2 \\
&+& (10+15q+10q^2)N_1^3 - (5+5q)N_1^4+N_1^5
\end{eqnarray*}
This data gives rise to the following observation of Adriano Garsia.

\begin{Thm} \label{Gar}
$$N_k = \sum_{i=1}^k (-1)^{i-1}P_{i,k}(q)N_1^i$$
where the $P_{i,k}$'s are polynomials with \emph{positive integer}
coefficients.
\end{Thm}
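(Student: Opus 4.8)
The plan is to start from the closed form $N_k = 1 + q^k - \alpha_1^k - \alpha_2^k$, where $\alpha_1,\alpha_2$ are the roots of $1-(1+q-N_1)T+qT^2$, and re-express the power sum $\alpha_1^k+\alpha_2^k$ in terms of the elementary symmetric functions $e_1 = \alpha_1+\alpha_2 = 1+q-N_1$ and $e_2 = \alpha_1\alpha_2 = q$. Since $1+q^k$ is a constant with respect to $N_1$, the entire $N_1$-dependence of $N_k$ is carried by $-(\alpha_1^k+\alpha_2^k)$, so it suffices to understand the power sum $p_k = \alpha_1^k+\alpha_2^k$ as a polynomial in $N_1$ (with coefficients in $q$). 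The first thing I would do is record Newton's identity in this two-variable setting, namely the recurrence $p_k = e_1 p_{k-1} - e_2 p_{k-2} = (1+q-N_1)p_{k-1} - q\,p_{k-2}$, with initial data $p_0 = 2$ and $p_1 = 1+q-N_1$.

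\medskip

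The main claim then becomes an assertion about signs: writing $N_k = \sum_{i=1}^k (-1)^{i-1}P_{i,k}(q)N_1^i$, I must show each $P_{i,k}(q)$ is a polynomial in $q$ with nonnegative (indeed positive) integer coefficients. I would prove this by induction on $k$ using the Newton recurrence, but the cleanest route is to track the sign pattern directly. Setting $x = N_1$, I would substitute $e_1 = (1+q) - x$ into the recurrence and argue that if $p_{k-1}$ and $p_{k-2}$ both have the alternating-sign structure $\sum_i (-1)^{i}(\text{positive in }q)\,x^i$ in degree $x$, then the product $(1+q-x)p_{k-1}$ preserves it: multiplying by the positive constant $(1+q)$ keeps signs, while multiplying by $-x$ shifts the degree up by one and flips the sign, which is exactly consistent with the alternating pattern; subtracting $q\,p_{k-2}$ (a positive multiple) likewise respects the pattern. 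The constant terms $1+q^k$ only affect the $i=0$ coefficient, which is discarded since the sum starts at $i=1$, so the top part of $N_k$ inherits the sign structure of $-p_k$. The positivity of the integer coefficients in $q$ should fall out of the same induction, since every operation — multiplication by $1+q$, multiplication by $q$, and addition — preserves positivity of $q$-coefficients.

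\medskip

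The step I expect to be the genuine obstacle is making the sign bookkeeping airtight rather than merely plausible, because the recurrence mixes a term that raises the $x$-degree ($-x\,p_{k-1}$) with a term that does not ($q\,p_{k-2}$), so I must verify that contributions to a \emph{fixed} power $N_1^i$ coming from these two sources carry the same sign before they are combined. Concretely, the coefficient of $N_1^i$ in $p_k$ receives a contribution $(1+q)\cdot[\text{coeff of }N_1^i\text{ in }p_{k-1}]$, a contribution $-[\text{coeff of }N_1^{i-1}\text{ in }p_{k-1}]$, and a contribution $-q\cdot[\text{coeff of }N_1^i\text{ in }p_{k-2}]$; I would check by the inductive hypothesis on the sign $(-1)^i$ that all three land with the correct common sign, so no cancellation can destroy positivity. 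To strengthen the conclusion from nonnegative to strictly positive integer coefficients, I would verify the base cases $k=1,2$ explicitly and then confirm that the leading behavior (the $N_1^k$ term, with coefficient $(-1)^{k-1}$, and the linear $N_1^1$ term) stays strictly positive at each inductive step; an alternative, if the direct induction proves awkward, is to invoke the explicit binomial description of the $P_{i,k}$ that the combinatorial interpretations later in the paper supply, but the self-contained Newton-recurrence argument is the one I would present here.
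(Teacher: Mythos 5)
Your overall route---induction on $k$ via the three-term recurrence $p_k = (1+q-N_1)p_{k-1} - q\,p_{k-2}$ for $p_k = \alpha_1^k+\alpha_2^k$---is essentially the argument the paper attributes to Garsia and cites rather than reproduces; the paper's own alternate proof is the combinatorial one (Theorems \ref{qtLucEll} and \ref{whser}), where positivity is automatic because $P_{i,k}(q)$ is exhibited as a weighted count of spanning trees of the wheel graph. However, your execution breaks at exactly the step you yourself flagged as the crux. Write $p_k = \sum_{i}(-1)^i c_{k,i}(q)N_1^i$, so that the goal is nonnegativity (indeed positivity) of the $c_{k,i}$. The three contributions to the coefficient of $N_1^i$ in $p_k$ are $(1+q)\cdot(-1)^i c_{k-1,i}$, then $-(-1)^{i-1}c_{k-1,i-1} = (-1)^i c_{k-1,i-1}$, and finally $-q\cdot(-1)^i c_{k-2,i} = (-1)^{i+1}q\,c_{k-2,i}$. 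The first two do carry the sign $(-1)^i$, but the third carries the \emph{opposite} sign, so your assertion that all three land with a common sign and hence ``no cancellation can destroy positivity'' is false. What the recurrence actually forces you to prove is the coefficientwise inequality
\begin{equation*}
(1+q)\,c_{k-1,i} + c_{k-1,i-1} \;\geq\; q\,c_{k-2,i},
\end{equation*}
which does not follow from nonnegativity of the $c$'s alone. The cancellation is real, not hypothetical: for $i=0$ one has $(1+q)(1+q^{k-1}) - q(1+q^{k-2}) = 1+q^k$, with the terms $q+q^{k-1}$ cancelling exactly; and for $k=3$, $i=1$ the two ``good'' contributions sum to $3+4q+3q^2$ while the third subtracts $q$, leaving $P_{1,3}=3+3q+3q^2$.

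The gap is repairable, but only by strengthening the induction hypothesis. For instance, prove simultaneously that $c_{k,i} \geq q\,c_{k-1,i}$ coefficientwise for all $i\geq 1$ (note this domination fails at $i=0$, where instead one computes $c_{k,0}=1+q^k$ exactly from the recurrence). Granting it, the needed inequality follows from $(1+q)c_{k-1,i} + c_{k-1,i-1} - q\,c_{k-2,i} \geq q\,c_{k-1,i} + c_{k-1,i-1} \geq 0$, and the domination itself propagates because $c_{k,i} - q\,c_{k-1,i} = \bigl(c_{k-1,i}-q\,c_{k-2,i}\bigr) + c_{k-1,i-1}$. As written, though, your induction does not close; moreover, strict positivity of every coefficient of $P_{i,k}$ for $1\leq i\leq k$ (as opposed to mere nonnegativity) still needs its own argument, which the strengthened induction can supply but which checking only the extreme terms $N_1^1$ and $N_1^k$, as you propose, does not.
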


This theorem is proved by Garsia using induction and the fact that
the sequence of $N_k$'s satisfy a simple recurrence.  For the
details, see \cite[Chap. 7]{Gars}.  This result motivates the
combinatorial question: what are the objects that the family of
polynomials, $\{P_{i,k}\}$, enumerate?  We answer this question in
due course in multiple ways, thus providing an alternate,
combinatorial, proof of Theorem \ref{Gar}.

\label{Nkexpanse}

\subsection{The Lucas numbers and a $(q,t)$-analogue} \label{LucSec}
\begin{Def}
Let $S_1^{(n)}$ be the circular shift of set $S \subseteq
\{1,2,\dots, n\}$ modulo $n$, i.e. element $x \in S_1^{(n)}$ if and
only if $x-1 ~(\mod~ n~)\in S$.  We define the {\bf $(q,t)-$Lucas
polynomials} to be the sequence of polynomials in variables $q$ and
$t$
\begin{eqnarray}
\label{Lucdeff} L_n(q,t) = \sum_{S \subseteq \{1,2,\dots,n\}~~:~~ S
\cap S_1^{(n)} = \emptyset}
q^{\#\mathrm{~even~elements~in~}S}\hspace{0.7em}t^{\lfloor{n\over
2}\rfloor-\#S}.
\end{eqnarray}
Note that this sum is over subsets $S$ with no two numbers
circularly consecutive.
\end{Def}

These polynomials are a generalization of the sequence of Lucas
polynomials $L_n$ which have the initial conditions $L_1=1$, $L_2=3$
(or $L_0=2$ and $L_1=1$) and satisfy the Fibonacci recurrence $L_n =
L_{n-1} + L_{n-2}$.  The first few Lucas numbers are
$$1,3,4,7,11,18,29,47,76,123,\dots$$
As described in numerous sources, e.g. \cite{SpLuc}, $L_n$ is equal
to the number of ways to color an $n-$beaded necklace black and
white so that no two black beads are consecutive.  You can also
think of this as choosing a subset of $\{1,2,\dots, n\}$ with no
consecutive elements, nor the pair $1,n$.  (We call this circularly
consecutive.) Thus letting $q$ and $t$ both equal one, we get by
definition that $L_n(1,1,)=L_n$.

We prove the following theorem, which relates our newly defined
$(q,t)-$Lucas polynomials to the polynomials of interest, namely the
$N_k$'s.

\begin{Thm}
\label{qtLucEll} \begin{eqnarray} \label{bothsides} 1+q^k-N_k =
L_{2k}(q,-N_1)\end{eqnarray} for all $k\geq 1$.
\end{Thm}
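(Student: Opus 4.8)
The plan is to reduce the claimed identity to a statement about power sums of two algebraic numbers, and then to realize the $(q,t)$-Lucas polynomial $L_{2k}(q,t)$ as the trace of the $k$-th power of an explicit $2\times 2$ transfer matrix whose eigenvalues are exactly $\alpha_1,\alpha_2$. By (\ref{preCheb}) we have $N_k = 1 + q^k - \alpha_1^k - \alpha_2^k$, so the left-hand side of (\ref{bothsides}) is $1 + q^k - N_k = \alpha_1^k + \alpha_2^k$. The excerpt records $\DD = 1+q-N_1$ together with the functional equation $\alpha_1\alpha_2 = q$; writing $t = -N_1$, this says precisely that $\alpha_1,\alpha_2$ are the two roots of $X^2 - (1+q+t)X + q$. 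Hence it suffices to prove the identity $L_{2k}(q,t) = \alpha_1^k + \alpha_2^k$ as a polynomial identity in $q$ and $t$, after which the substitution $t = -N_1$ yields the theorem.

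Next I would set up a transfer matrix for the sum defining $L_{2k}(q,t)$. Reading the weight in (\ref{Lucdeff}) as $t^{\lfloor n/2\rfloor}$ times $\prod_{x\in S} w_x$, an odd element contributes $w_x = t^{-1}$ and an even element contributes $w_x = qt^{-1}$, while the circular no-two-consecutive condition is encoded by a product of $2\times 2$ matrices indexed by position, the two states recording membership in $S$. Pairing each odd position with the following even position produces the two matrices
$$A = \begin{pmatrix} 1 & qt^{-1} \\ 1 & 0 \end{pmatrix}, \qquad B = \begin{pmatrix} 1 & t^{-1} \\ 1 & 0 \end{pmatrix},$$
and the standard transfer-matrix count on the cycle of length $2k$ then gives $L_{2k}(q,t) = t^{k}\,\mathrm{tr}\big((AB)^k\big)$.

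I would finish by computing eigenvalues. The matrix $M = AB$ has $\mathrm{tr}(M) = 1 + (q+1)t^{-1}$ and $\det(M) = qt^{-2}$, so its eigenvalues $\lambda_1,\lambda_2$ satisfy $\mathrm{tr}((AB)^k) = \lambda_1^k + \lambda_2^k$. Setting $\mu_i = t\lambda_i$ gives $\mu_1 + \mu_2 = 1 + q + t$ and $\mu_1\mu_2 = q$, so $\{\mu_1,\mu_2\} = \{\alpha_1,\alpha_2\}$ and
$$L_{2k}(q,t) = t^k\big(\lambda_1^k + \lambda_2^k\big) = \mu_1^k + \mu_2^k = \alpha_1^k + \alpha_2^k,$$
as required.

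The main obstacle is establishing the trace formula $L_{2k}(q,t) = t^k\,\mathrm{tr}((AB)^k)$ rigorously: one must check that the wrap-around constraint across the seam of the cycle is handled correctly and that the familiar degenerate discrepancy at length $0$ (where the necklace count gives $1$ while the trace of the identity gives $2$) does not intrude for $k \geq 1$. I would guard against this by verifying the base cases $L_2 = 1+q+t$ and $L_4 = (1+q+t)^2 - 2q$ directly from (\ref{Lucdeff}) and confirming agreement with $\mu_1^k + \mu_2^k$. Equivalently, one may bypass the matrix language entirely by proving the three-term recurrence $L_{2k}(q,t) = (1+q+t)L_{2k-2}(q,t) - q\,L_{2k-4}(q,t)$ combinatorially, via casework on whether $2k$ and $1$ lie in $S$, and matching it against the Newton recurrence $p_k = (1+q+t)p_{k-1} - q\,p_{k-2}$ satisfied by the power sums $p_k = \alpha_1^k + \alpha_2^k$.
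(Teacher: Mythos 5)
Your proposal is correct, but it takes a genuinely different route from the paper's. The paper proves Theorem \ref{qtLucEll} by checking the base cases $k=1,2$ and showing that both sides satisfy the three-term recurrence $G_{k+1}=(1+q-N_1)G_k-qG_{k-1}$: on the curve side this is a one-line computation with $\alpha_1^k+\alpha_2^k$ and $\alpha_1\alpha_2=q$ (Proposition \ref{akId}), but on the Lucas side it requires the delicate combinatorial argument of Proposition \ref{Lrecur} --- decomposing $(2k+2)$-necklaces into a $2$-necklace and a $2k$-necklace, and correcting for non-decomposable necklaces and non-recombinable pairs via the twisted $(q,t)$-Fibonacci polynomials $\tilde{F}_n$. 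You instead realize $t^{-k}L_{2k}(q,t)$ as $\mathrm{tr}\big((AB)^k\big)$ for an explicit pair of transfer matrices and identify the scaled eigenvalues $\mu_i=t\lambda_i$ as the roots of $X^2-(1+q+t)X+q$, i.e. as $\alpha_1,\alpha_2$ once $t=-N_1$; your arithmetic checks out ($\mathrm{tr}(AB)=1+(q+1)t^{-1}$, $\det(AB)=qt^{-2}$), and the identity then holds for all $k$ simultaneously, with no induction. The ``main obstacle'' you flag is in fact routine: in the trace expansion each factor $T_{i\to i+1}$ enforces exactly one adjacency constraint and carries the weight of position $i+1$, and the trace itself closes the seam between positions $2k$ and $1$, so the wrap-around is handled automatically; the only cosmetic issue is that $t^{-1}$ forces you to work in $\mathbb{Q}(q,t)$, which is harmless since both sides of the final identity are polynomials (Newton's identities express $\mu_1^k+\mu_2^k$ polynomially in $1+q+t$ and $q$). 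What your approach buys is brevity and a conceptual explanation of \emph{why} $L_{2k}(q,-N_1)$ is a power sum of $\alpha_1,\alpha_2$: they are, up to the scaling by $t$, the eigenvalues of the transfer matrix. What the paper's longer route buys is reusable machinery: the three-term recurrence reappears in the Chebyshev interpretation (Theorem \ref{Chebk}) and in the first proof of Theorem \ref{detformu}, and the twisted Fibonacci polynomials return in Theorem \ref{EkasFib}. Note that your fallback suggestion at the end --- prove the recurrence $L_{2k}=(1+q+t)L_{2k-2}-qL_{2k-4}$ combinatorially and match it against the Newton recurrence for power sums --- is essentially the paper's own proof.
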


To prove this result it suffices to prove that both sides are equal
for $k \in \{1,2\}$, and that both sides satisfy the same three-term
recurrence relation.  Since
\begin{eqnarray*}
L_2(q,t) &=& 1 + q   + t  \mathrm{~~~~~and}\\
L_4(q,t) &=& 1 + q^2 + (2q+2)t+ t^2
\end{eqnarray*}
we have proven that the initial conditions agree.  Note that the
sets of (\ref{Lucdeff}) yielding the terms of these sums are
respectively
$$\{1\},~\{2\},~\{\hspace{0.5em}\}\mathrm{~~~and~~~~}\{1,3\},~\{2,4\},~\{1\},~\{2\},~\{3\},~\{4\},~\{\hspace{0.5em}\}.$$
It remains to prove that both sides of (\ref{bothsides}) satisfy the
recursion
$$G_{k+1} = (1+q-N_1)G_{k} - qG_{k-1}$$ for $k \geq 1$.

\begin{Prop}
\label{Lrecur} For the $(q,t)-$Lucas polynomials $L_k(q,t)$ defined
as above,
\begin{eqnarray}
\label{l2nrecur} L_{2k+2}(q,t) = (1+q+t)L_{2k}(q,t) -
qL_{2k-2}(q,t). \end{eqnarray}
\end{Prop}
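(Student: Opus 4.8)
The plan is to prove the recursion
\begin{eqnarray*}
L_{2k+2}(q,t) = (1+q+t)L_{2k}(q,t) - qL_{2k-2}(q,t)
\end{eqnarray*}
combinatorially, directly from the definition (\ref{Lucdeff}), by classifying the subsets $S \subseteq \{1,2,\dots,2k+2\}$ that avoid circularly consecutive pairs according to their behavior near the ``top'' of the cycle, i.e. on the elements $2k+1$ and $2k+2$. The guiding principle is that removing two beads from a necklace of length $2k+2$ should, after a sign/weight bookkeeping, reduce to necklaces of length $2k$ and $2k-2$, matching the two terms on the right-hand side. I would first set up notation: write $\mathcal{L}_{2n}$ for the collection of admissible $S$ (no two circularly consecutive) on $\{1,\dots,2n\}$, and note that each $S$ carries the weight $q^{\#\{\text{even elements of }S\}}\,t^{\,n-\#S}$ appearing in the sum.

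The key step is a casework decomposition of $\mathcal{L}_{2k+2}$ based on whether the two new elements $2k+1,2k+2$ lie in $S$. Because $2k+2$ is circularly adjacent to $1$, and $2k+1$ is adjacent to $2k+2$ and $2k$, at most one of $2k+1,2k+2$ can belong to $S$, giving three cases: (i) neither is in $S$; (ii) $2k+1 \in S$ (forcing $2k,2k+2 \notin S$, and leaving $1$ unconstrained by the top); (iii) $2k+2 \in S$ (forcing $2k+1,1 \notin S$). In each case the restriction of $S$ to a smaller index set should be an admissible configuration on $\{1,\dots,2k\}$ or $\{1,\dots,2k-2\}$, and I must track exactly how the $q$-weight (parity of the deleted elements, noting $2k+2$ is even and $2k+1$ is odd) and the $t$-weight (the floor exponent drops as $n$ decreases and as $\#S$ changes) transform. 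The target is to show these three cases reassemble precisely into $(1+q+t)L_{2k} - qL_{2k-2}$: informally, the ``neither'' case and the removal of the top pair should produce the $1\cdot L_{2k}$ together with the $-q L_{2k-2}$ correction, while the two ``one new element'' cases supply the $q\,L_{2k}$ and $t\,L_{2k}$ contributions via the parity of $2k+2$ and the change in $\#S$.

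I expect the main obstacle to be precisely the boundary bookkeeping: the circular adjacency couples the new elements $2k+1,2k+2$ to element $1$, so naively deleting the top pair does not yield a clean cyclic configuration on $\{1,\dots,2k\}$ — one must correct for whether element $1$ (and element $2k$) was constrained, and the subtracted $q L_{2k-2}$ term is exactly the inclusion–exclusion correction that repairs the over- or under-counting of configurations containing element $1$ or element $2k$. Getting the parity contributions right (which deleted beads are even, hence carry a factor $q$) and the exponent shift in $t$ to line up with the stated coefficients $1$, $q$, $t$, and $-q$ will require care but is routine once the cases are fixed.

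As an alternative in case the purely combinatorial matching proves fragile at the boundary, I would fall back on a generating-function or transfer-matrix argument: encode the admissible subsets of a cycle via a $2\times 2$ transfer matrix whose entries track the $q$- and $t$-weights of including or excluding each bead, so that $L_{2n}(q,t)$ becomes (up to boundary terms enforcing the circular no-two-consecutive condition) the trace of the $2n$-th power of that matrix. The recursion (\ref{l2nrecur}) would then follow from the characteristic (Cayley–Hamilton) relation satisfied by the square of the transfer matrix, with the coefficients $1+q+t$ and $q$ emerging as its trace and determinant; this route sidesteps the delicate boundary casework at the cost of a short linear-algebra computation.
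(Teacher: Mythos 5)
Your primary route --- conditioning on which of $2k+1$, $2k+2$ lies in $S$ --- is in essence the same strategy as the paper's own proof, which glues a $2$-necklace on beads $\{2k+1,2k+2\}$ to a $2k$-necklace and then corrects by inclusion--exclusion. But the assembly you sketch is wrong in detail, and the step you defer as ``routine'' is precisely where all the work lies. Conditioning on the top beads destroys the circular adjacency between $2k$ and $1$, so in each of your three cases the residual configuration is an admissible subset of a \emph{path} (on $\{1,\dots,2k\}$, $\{1,\dots,2k-1\}$, or $\{2,\dots,2k\}$ respectively), and a path count is never a monomial multiple of the cyclic count $L_{2k}$. Concretely, for $k=2$, in the paper's auxiliary normalization $\tilde{L}_{2n}(q,t)=\sum_S q^{\#\mathrm{even}(S)}t^{\#S}$ (so $L_{2n}(q,t)=t^n\tilde{L}_{2n}(q,t^{-1})$), your case $2k+1\in S$ contributes $t\,(1+2t+qt+t^2)$, whereas $t\,\tilde{L}_4=t\,(1+2t+2qt+t^2+q^2t^2)$; these differ. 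So \emph{every} case, not only the ``neither'' case, requires a path-to-cycle (Fibonacci-to-Lucas) correction, and one must then prove that the three discrepancies combine into the single term $-qL_{2k-2}$. That interlocking is exactly what the paper's machinery handles: it introduces the twisted Fibonacci polynomials $\tilde{F}_n$, computes the two error terms $2qt^2\tilde{F}_{2k-2}$ (Lemma \ref{quant1}) and $qt^2\tilde{F}_{2k-3}$ (Lemma \ref{quant2}), and then proves $2qt^2\tilde{F}_{2k-2}-qt^2\tilde{F}_{2k-3}=qt^2\tilde{L}_{2k-2}$. Your sketch correctly locates the difficulty (the coupling to beads $1$ and $2k$), but as written the primary route does not close without supplying this extra layer of identities.

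Your fallback, by contrast, is a complete proof, genuinely different from the paper's, and I would promote it to the main argument. It is also better than you fear: the trace formula for vertex-weighted independent sets on a cycle is \emph{exact} --- the trace itself enforces the circular no-two-consecutive condition, with no boundary corrections. In the normalization above, take
$$M_{\mathrm{odd}}=\left[\begin{matrix}1 & t\\ 1 & 0\end{matrix}\right],\qquad M_{\mathrm{even}}=\left[\begin{matrix}1 & qt\\ 1 & 0\end{matrix}\right],\qquad P=M_{\mathrm{odd}}M_{\mathrm{even}}=\left[\begin{matrix}1+t & qt\\ 1 & qt\end{matrix}\right],$$
where rows and columns index white/black, the off-diagonal and diagonal entries record the weight of the current bead, and the zero entry forbids two consecutive black beads. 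Then $\tilde{L}_{2n}=\mathrm{tr}(P^n)$, and since $\mathrm{tr}(P)=1+t+qt$ and $\det P=qt^2$, Cayley--Hamilton gives $P^{n+1}=(1+t+qt)P^n-qt^2P^{n-1}$, hence $\tilde{L}_{2n+2}=(1+t+qt)\tilde{L}_{2n}-qt^2\tilde{L}_{2n-2}$, which becomes exactly (\ref{l2nrecur}) after substituting $t\mapsto t^{-1}$ and multiplying by $t^{n+1}$. Compared with the paper's proof, this trades the bijective information (and the polynomials $\tilde{F}_n$, which the paper reuses in Section \ref{bifibsec}) for a few lines of linear algebra in which the coefficients $1+q+t$ and $q$ arise for precisely the structural reason you predicted: as the trace and determinant of the one-period transfer matrix.
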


\begin{proof}
To prove this we actually define an auxiliary set of polynomials,
$\{\tilde{L}_{2k}\}$, such that
$$L_{2k}(q,t) = t^{k}\tilde{L}_{2k}(q,t^{-1}).$$
Thus recurrence (\ref{l2nrecur}) for the $L_{2k}$'s translates into
\begin{eqnarray}
\label{l2tnrecur} \tilde{L}_{2k+2}(q,t) =
(1+t+qt)\tilde{L}_{2k}(q,t) - qt^2\tilde{L}_{2k-2}(q,t)
\end{eqnarray}
for the $\tilde{L}_{2k}$'s.  The $\tilde{L}_{2k}$'s happen to have a
nice combinatorial interpretation also, namely

$$\tilde{L}_{2k}(q,t) = \sum_{S \subseteq \{1,2,\dots,2k\}~~:~~ S \cap S_1^{(2k)} = \emptyset}
q^{\#\mathrm{~even~elements~in~}S}~t^{\#S}.$$
Recall our slightly different description which considers these as
the generating function of $2$-colored, labeled necklaces.  We find
this terminology slightly easier to work with.  We can think of the
beads labeled $1$ through $2k+2$ to be constructed from a pair of
necklaces; one of length $2k$ with beads labeled $1$ through $2k$,
and one of length $2$ with beads labeled $2k+1$ and $2k+2$.

Almost all possible necklaces of length $2k+2$ can be decomposed in
such a way since the coloring requirements of the $2k+2$ necklace
are more stringent than those of the pairs.  However not all
necklaces can be decomposed this way, nor can all pairs be pulled
apart and reformed as a $(2k+2)$-necklace.  For example, if $k=2$:

\vspace{2em}

Decomposable \hspace{2.8em} \includegraphics[width = 1.5in, height =
1.5in]{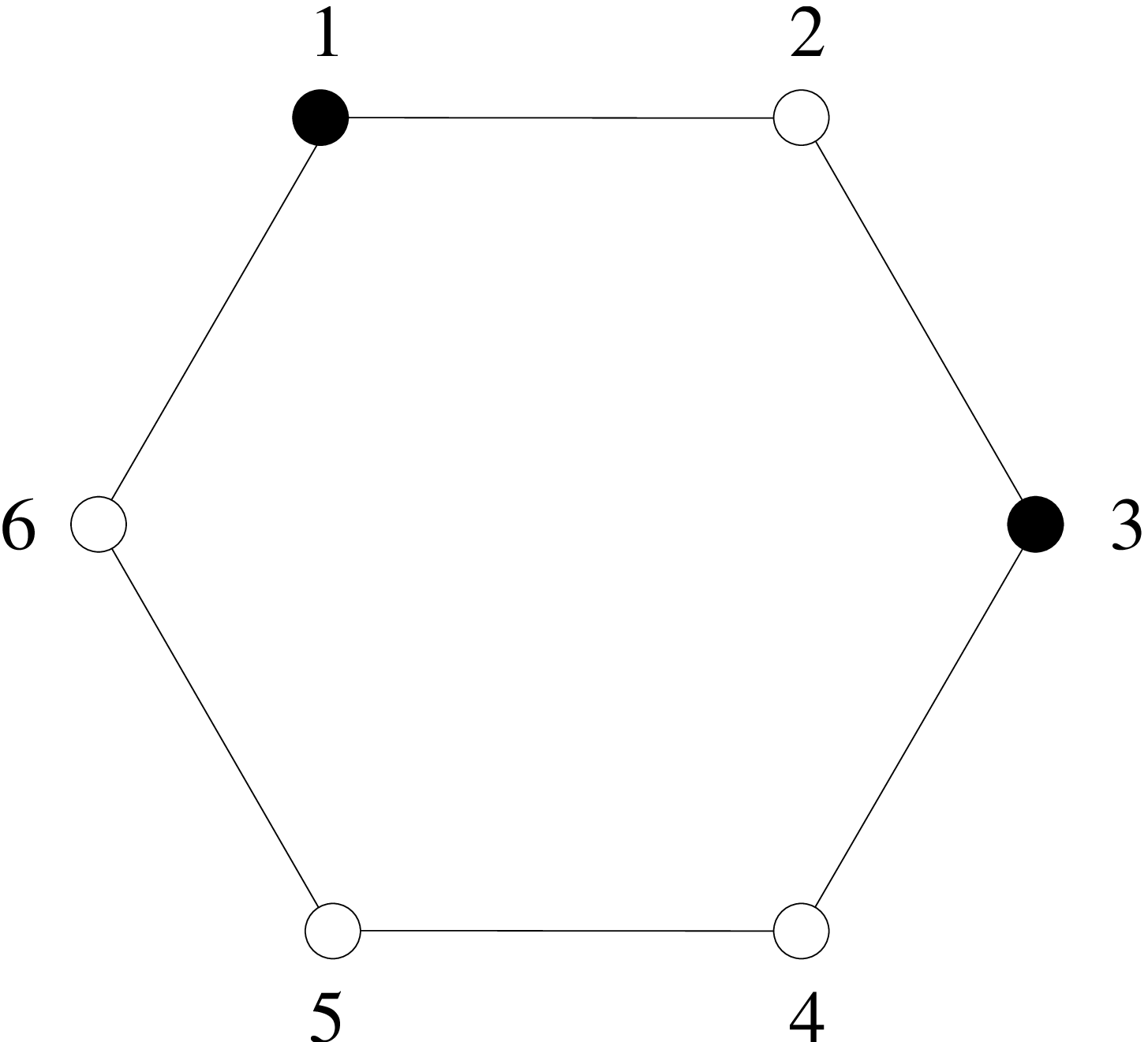} \hspace{1em} $\rightarrow$ \hspace{1em}
\includegraphics[width = 1.5in, height = 1.5in]{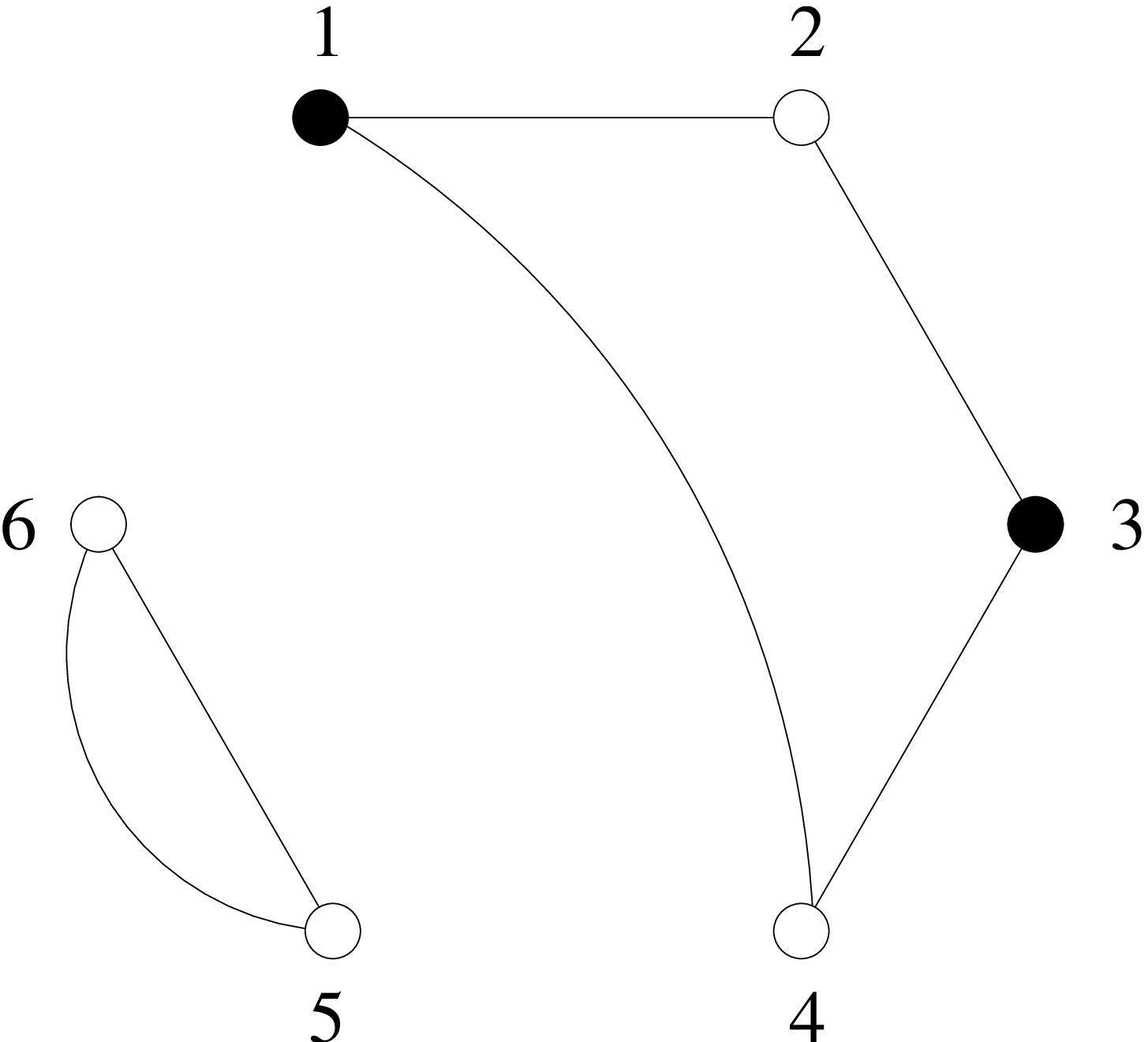}
\vspace{1em}

Not Decomposable \hspace{1em} \includegraphics[width = 1.5in, height
= 1.5in]{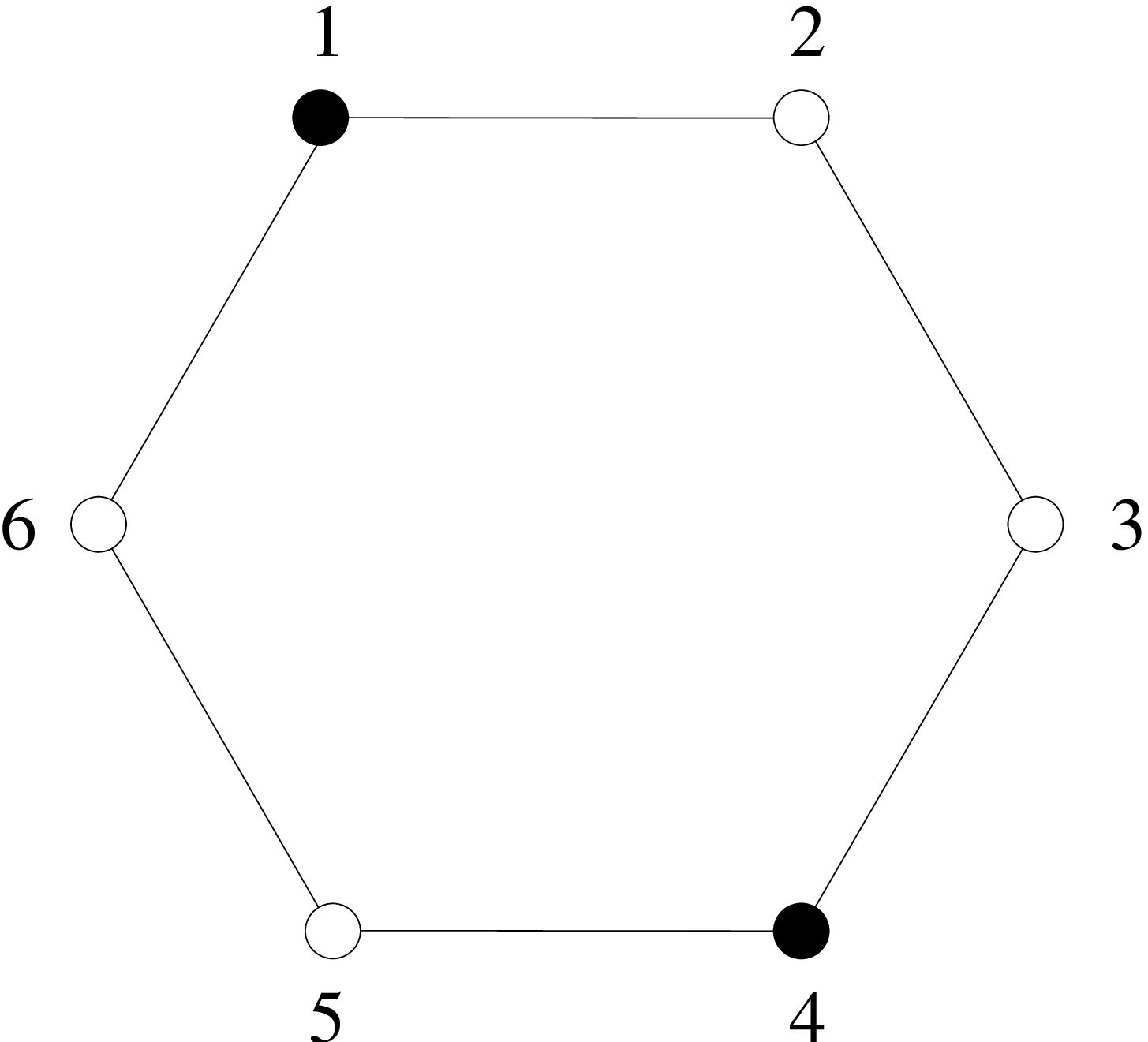} \hspace{1em} $\not \rightarrow$ \hspace{1em}
\includegraphics[width = 1.5in, height = 1.5in]{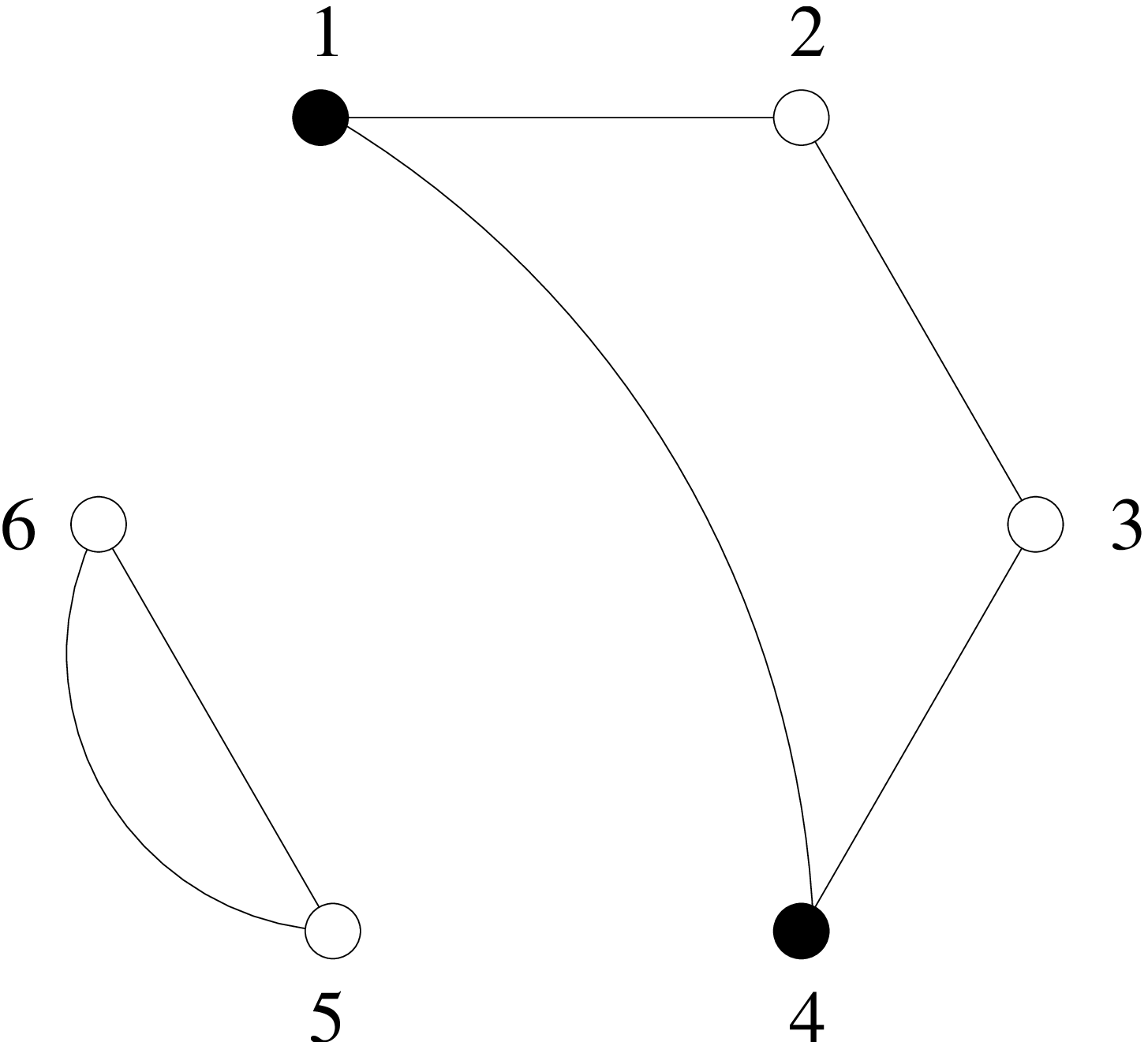}

\vspace{2em}

In these figures, the first necklace is decomposable but the second
one is not since black beads $1$ and $4$ would be adjacent, thus
violating the rule. It is clear enough that the number of pairs is
$\tilde{L}_2(q,t)\tilde{L}_{2k}(q,t) = (1+t+qt)\tilde{L}_{2k}(q,t)$.
To get the third term of the recurrence, i.e.
$qt^2\tilde{L}_{2k-2}$, we must define linear analogues,
$\tilde{F}_{n}(q,t)$'s, of the previous generating function.  Just
as the $\tilde{L}_{n}(1,1)$'s were Lucas numbers, the
$\tilde{F}_{n}(1,1)$'s are Fibonacci numbers.

\begin{Def}
The (twisted) $(q,t)-$Fibonacci polynomials, denoted as
$\tilde{F}_n(q,t)$, are defined as

$$\tilde{F}_k(q,t) = \sum_{S \subseteq \{1,2,\dots,k-1\}~~:~~ S \cap (S_1^{(k-1)} - \{1\}) = \emptyset}
q^{\#\mathrm{~even~elements~in~}S}~t^{\#S}.$$

\end{Def}

The summands here are subsets of $\{1,2,\dots,k-1\}$ such that no
two elements are \emph{linearly} consecutive, i.e. we now allow a
subset with both the first and last elements.  An alternate
description of the objects involved are as (linear) chains of $k-1$
beads which are black or white with no two consecutive black beads.
With these new polynomials at our disposal, we can calculate the
third term of the recurrence, which is the difference between the
number of pairs that cannot be recombined and the number of
necklaces that cannot be decomposed.

\begin{Lem} \label{quant1}
The number of pairs that cannot be recombined into a longer necklace
is $2qt^2\tilde{F}_{2k-2}(q,t).$
\end{Lem}

\begin{proof}
We have two cases: either both $1$ and $2k+2$ are black, or both
$2k$ and $2k+1$ are black.  These contribute a factor of $qt^2$, and
imply that beads $2$, $2k$, and $2k+1$ are white, or that $1$,
$2k-1$, and $2k+2$ are white, respectively.  In either case, we are
left counting chains of length $2k-3$, which have no consecutive
black beads.  In one case we start at an odd-labeled bead and go to
an evenly labeled one, and the other case is the reverse, thus
summing over all possibilities yields the same generating function
in both cases.
\end{proof}

\begin{Lem} \label{quant2}
The number of $(2k+2)$-necklaces that cannot be decomposed into a
$2$-necklace and a $2k$-necklace is $qt^2\tilde{F}_{2k-3}(q,t)$.
\end{Lem}

\begin{proof}
The only ones that cannot be decomposed are those which have beads
$1$ and $2k$ both black. Since such a necklace would have no
consecutive black beads, this implies that beads $2$, $2k-1$,
$2k+1$, and $2k+2$ are all white.  Thus we are reduced to looking at
chains of length $2k-4$, starting at an odd, $3$, which have no
consecutive black beads.
\end{proof}

\begin{Lem} The difference of the quantity referred to in Lemma \ref{quant2} from the quantity in Lemma \ref{quant1}
is exactly $qt^2\tilde{L}_{2k-2}(q,t)$.
\end{Lem}

\begin{proof}
It suffices to prove the relation
$$qt^2\tilde{L}_{2k-2}(q,t) =
2qt^2\tilde{F}_{2k-2}(q,t) -qt^2\tilde{F}_{2k-3}(q,t)$$ which is
equivalent to
\begin{eqnarray}
\label{Lucide} qt^2\tilde{L}_{2k-2}(q,t) =
qt^2\tilde{F}_{2k-2}(q,t)+q^2t^3\tilde{F}_{2k-4}(q,t) \end{eqnarray}
since
\begin{eqnarray} \label{FibId}
\tilde{F}_{2k-2}(q,t) = qt\tilde{F}_{2k-4}(q,t) +
\tilde{F}_{2k-3}(q,t).
\end{eqnarray}
Note that identity (\ref{FibId}) simply comes from the fact that the
$(2k-2)$nd bead can be black or white.  Finally we prove
(\ref{Lucide}) by dividing by $qt^2$, and then breaking it into the
cases where bead $1$ is white or black.  If bead $1$ is white, we
remove that bead and cut the necklace accordingly.  If bead $1$ is
black, then beads $2$ and $2k+2$ must be white, and we remove all
three of the beads.
\end{proof}

With this lemma proven, the recursion for the $\tilde{L}_{2k}$'s, hence the
$L_{2k}$'s follows immediately.
\end{proof}

\begin{Prop} \label{akId}
For an elliptic curve $C$ with $N_k$ points over $\f_{q^k}$ we have
that $$1+q^{k+1} - N_{k+1} = (1+q-N_1)(1+q^k-N_k) -
q(1+q^{k-1}-N_{k-1}).$$
\end{Prop}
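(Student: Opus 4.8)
The plan is to reduce this identity to the standard linear recurrence satisfied by the power sums of the two roots $\alpha_1,\alpha_2$. From (\ref{preCheb}) we have the closed form $N_k = 1 + q^k - \alpha_1^k - \alpha_2^k$, so if I write $s_k = \alpha_1^k + \alpha_2^k$, then the quantity appearing on every side of the proposition is exactly $1 + q^k - N_k = s_k$. Thus the entire statement is equivalent to showing
$$s_{k+1} = (1+q-N_1)\,s_k - q\,s_{k-1}.$$

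First I would record the two symmetric functions of the roots. The discussion following (\ref{rationalityC}) gives $\alpha_1 + \alpha_2 = 1 + q - N_1$ and, via the functional equation, $\alpha_1\alpha_2 = q$; equivalently, $\alpha_1$ and $\alpha_2$ are the two roots of $T^2 - (1+q-N_1)T + q$. Consequently each root individually satisfies $\alpha_i^2 = (1+q-N_1)\alpha_i - q$, and multiplying through by $\alpha_i^{k-1}$ yields $\alpha_i^{k+1} = (1+q-N_1)\alpha_i^k - q\,\alpha_i^{k-1}$ for $i = 1,2$.

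Summing these two identities over $i$ gives the power-sum recurrence $s_{k+1} = (1+q-N_1)s_k - q\,s_{k-1}$ directly; this is simply Newton's identity for a degree-two polynomial, i.e.\ the statement that a power sum of roots obeys the recurrence whose characteristic polynomial is the minimal polynomial of those roots. Substituting $s_j = 1 + q^j - N_j$ back in recovers the claimed formula, completing the proof.

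There is no serious obstacle here: the content lies entirely in recognizing that $1 + q^k - N_k$ is the $k$th power sum of $\alpha_1,\alpha_2$, after which the recurrence is forced by their characteristic polynomial. The only point demanding care is the bookkeeping of which quadratic the $\alpha_i$ satisfy — one must use the convention in which $\alpha_1+\alpha_2 = 1+q-N_1$ and $\alpha_1\alpha_2 = q$, so that the characteristic polynomial is $T^2-(1+q-N_1)T+q$ and its roots obey the recurrence with exactly the stated coefficients. I note finally that this proposition is precisely the $N_k$-side of the three-term recursion promised in the proof of Theorem \ref{qtLucEll}, so together with Proposition \ref{Lrecur} it closes that argument.
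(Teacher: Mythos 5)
Your proof is correct and follows essentially the same route as the paper: both reduce the identity to the power-sum recurrence $\alpha_1^{k+1}+\alpha_2^{k+1} = (\alpha_1+\alpha_2)(\alpha_1^k+\alpha_2^k) - \alpha_1\alpha_2(\alpha_1^{k-1}+\alpha_2^{k-1})$ using $\alpha_1+\alpha_2 = 1+q-N_1$ and $\alpha_1\alpha_2 = q$. Your derivation via the characteristic polynomial of each root, rather than the paper's direct algebraic expansion of the right-hand side, is only a cosmetic difference.
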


\begin{proof}
Recalling that for an elliptic curve $C$ we have the identity
$$N_k = 1 + q^k - \alpha_1^k -\alpha_2^k,$$ we can rewrite the
statement of this proposition as
\begin{eqnarray}
\label{alpheqeasy} \alpha_1^{k+1}+\alpha_2^{k+1} =
(\alpha_1+\alpha_2)(\alpha_1^k+\alpha_2^k) - q(\alpha_1^{k-1}+\alpha_2^{k-1}).
\end{eqnarray} Noting that $q=\alpha_1\alpha_2$ we obtain this proposition after
expanding out algebraically the right-hand-side of (\ref{alpheqeasy}).
\end{proof}

With the proof of Propositions \ref{Lrecur} and \ref{akId}, we have
proven Theorem \ref{qtLucEll}.

\subsection{$(q,t)-$Wheel polynomials} \label{whnums}

Given that the Lucas numbers are related to the polynomial formulas
$N_k(q,N_1)$, a natural question concerns how alternative
interpretations of the Lucas numbers can help us better understand
$N_k$.  As noted in \cite{SpLuc}, \cite{firsttree}, and \cite[Seq.
A004146]{intseq}, the sequence $\{L_{2n}-2\}$ counts the number of
spanning trees in the wheel graph $W_n$; a graph which consists of
$n+1$ vertices, $n$ of which lie on a circle and one vertex in the
center, a hub, which is connected to all the other vertices.

We note that a spanning tree $T$ of $W_n$ consists of spokes and a
collection of disconnected arcs on the rim. Further, since there are
no cycles and $T$ is connected, each spoke intersects exactly one
arc. (Since it will turn out to be convenient in the subsequent
considerations, we make the -- somewhat counter-intuitive --
convention that an isolated vertex is considered to be an arc of
length $1$, and more generally, an arc consisting of $k$
\emph{vertices} is considered as
an arc of \emph{length} $k$.) %
%
We imagine the circle being oriented clockwise, and imagine the tail
of each arc being the vertex which is the sink for that arc. In the
case of an isolated vertex, the lone vertex is the tail of that arc.
Since the spoke intersects each arc exactly once, if an arc has
length $k$, meaning that it contains $k$ vertices, there are $k$
choices of where the spoke and the arc meet.  We define the
$q-$weight of an arc to be
$q^{\mathrm{~number~of~edges~between~the~spoke~and~the~tail}}$,
abbreviating this exponent as $spoke-tail$ distance. We define the
$q-$weight of the tree to be the product of the $q-$weights for all
arcs on the rim of the tree. This combinatorial interpretation
motivates the following definition.

\begin{Def}
$$\mathcal{W}_n(q,t) = \sum_{T\mathrm{~a~spanning~tree~of~}W_n}
q^{\mathrm{sum~of~spoke-tail~distance~in~}T}~t^{\#~\mathrm{spokes~of~}T}.$$

Here the exponent of $t$ counts the number of edges emanating from
the central vertex, and the exponent of $q$ is as above.
\end{Def}

\vspace{2em}

$q^2t^3$ \hspace{1em} \includegraphics[width = 1.5in, height =
1.5in]{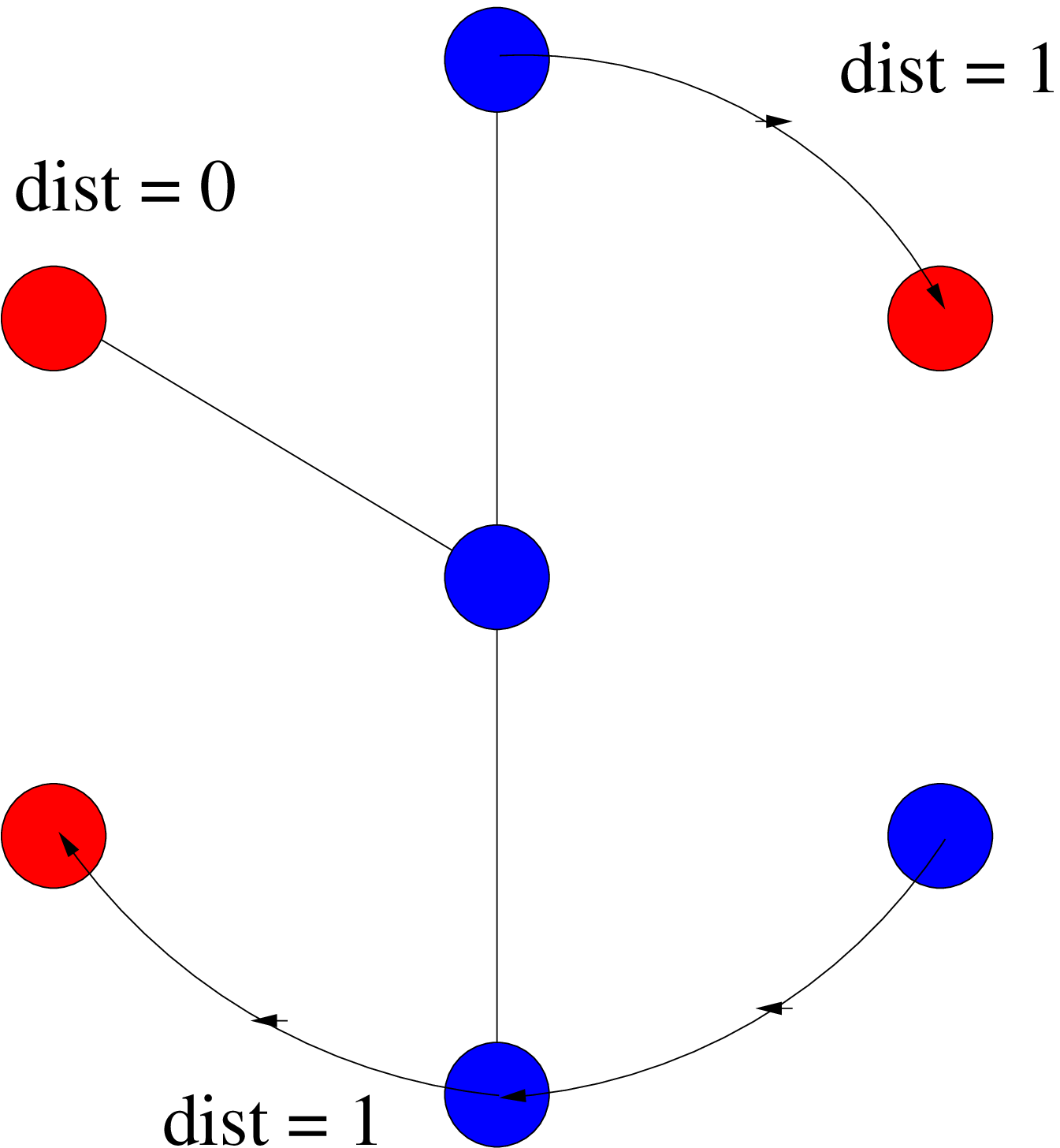} \hspace{1em} $q^3t^3$ \hspace{1em}
\includegraphics[width = 1.5in, height = 1.5in]{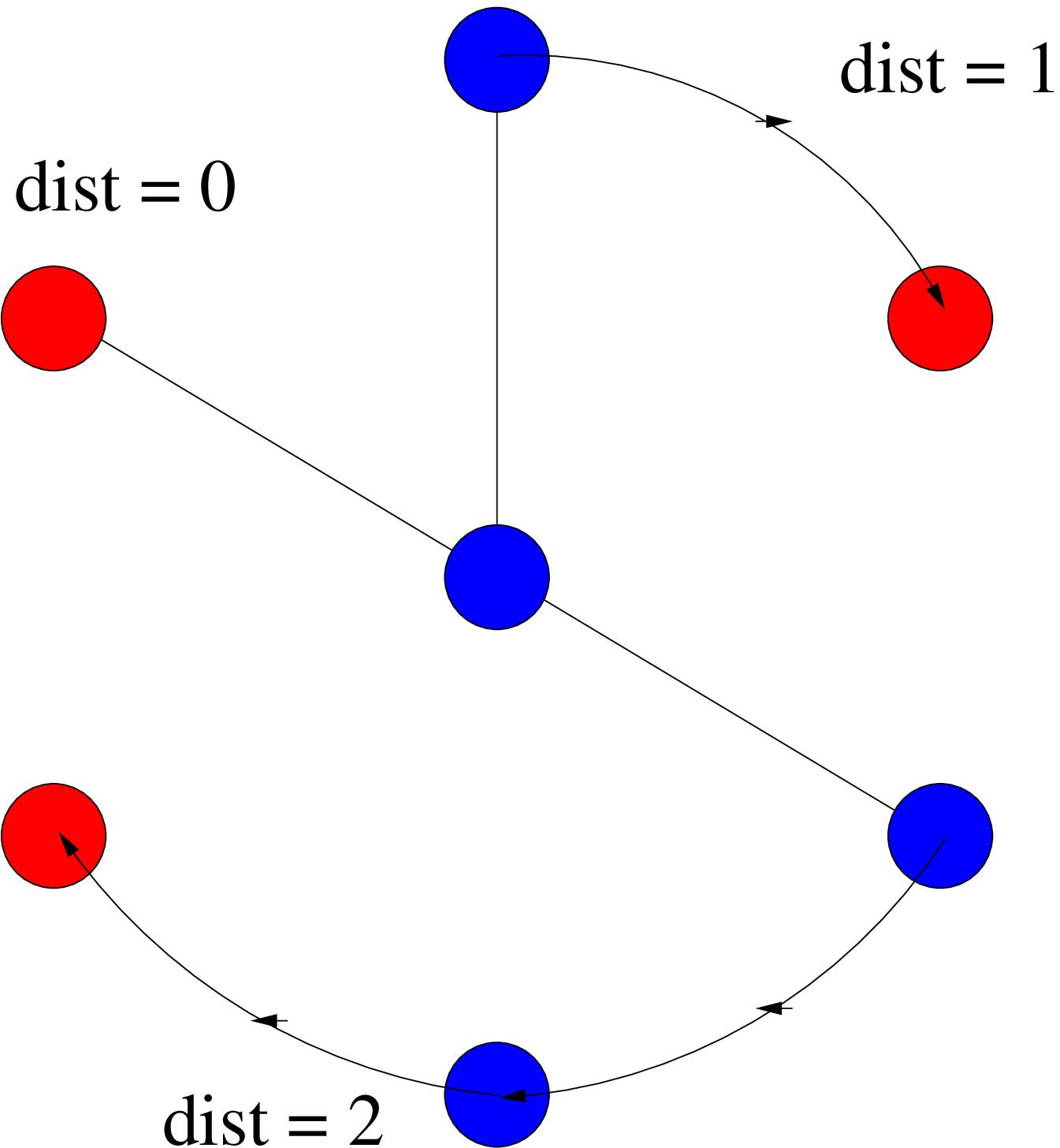}

\vspace{2em}

\noindent This definition actually provides exactly the generating
function that we desired.

\begin{Thm}
\label{whser}
$$N_k = -\mathcal{W}_k(q,-N_1)$$ for all $k\geq 1$.
\end{Thm}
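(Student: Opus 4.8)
The plan is to reduce the statement to a single polynomial identity in $q$ and $t$ and then to prove that identity combinatorially. Combining the desired equation $N_k = -\mathcal{W}_k(q,-N_1)$ with Theorem \ref{qtLucEll}, which gives $N_k = 1 + q^k - L_{2k}(q,-N_1)$, we see that it suffices to establish
\[ \mathcal{W}_k(q,t) = L_{2k}(q,t) - 1 - q^k \]
as an identity of polynomials in $q$ and $t$; substituting $t = -N_1$ and invoking Theorem \ref{qtLucEll} then yields the theorem at once. This reduction is attractive because it removes all reference to elliptic curves: what remains is a purely enumerative comparison between spanning trees of $W_k$ and circularly sparse subsets of $\{1,\dots,2k\}$, which is consistent with the classical fact that $\mathcal{W}_k(1,1) = L_{2k}-2$ counts spanning trees of $W_k$.

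First I would account for the correction term $1 + q^k$. Since $\lfloor 2k/2\rfloor = k$, the weight in (\ref{Lucdeff}) is $q^{\#\,\mathrm{even}(S)}\,t^{\,k - \#S}$, so the terms of $L_{2k}(q,t)$ carrying no factor of $t$ are exactly those coming from circularly sparse subsets $S \subseteq \{1,\dots,2k\}$ of maximal size $\#S = k$. The only such subsets are the all-odd set $\{1,3,\dots,2k-1\}$ and the all-even set $\{2,4,\dots,2k\}$, contributing $q^0 = 1$ and $q^k$ respectively. Thus $L_{2k}(q,t) - 1 - q^k$ is precisely the generating function of circularly sparse subsets with $\#S \le k-1$, and the task becomes matching these with the spanning trees of $W_k$.

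Next I would rewrite $\mathcal{W}_k(q,t)$ through its arc decomposition. A spanning tree of $W_k$ is obtained by deleting a nonempty set $D$ of the $k$ rim edges, which cuts the rim into $\#D$ arcs, and then attaching one spoke to each arc; an arc of length $\ell$ contributes the factor $1 + q + \cdots + q^{\ell-1}$ as the spoke ranges over its $\ell$ vertices, and each arc carries one factor of $t$. Hence
\[ \mathcal{W}_k(q,t) = \sum_{j=1}^{k} t^{\,j} \sum_{\#D = j} \ \prod_{A} \bigl(1 + q + \cdots + q^{\,\ell(A)-1}\bigr), \]
where $D$ ranges over the size-$j$ subsets of the $k$ rim edges and $A$ ranges over the arcs of lengths $\ell(A)$ produced by deleting $D$; these deletions are enumerated by cyclic compositions of $k$ into $j$ positive parts. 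Matching the coefficient of $t^j$ against $\sum q^{\#\,\mathrm{even}(S)}$ over circularly sparse $S$ with $\#S = k-j$ then reduces the whole identity to a coefficientwise comparison.

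The heart of the argument, and the step I expect to be the main obstacle, is this last coefficientwise identity: constructing an explicit weight-preserving bijection between spanning trees of $W_k$ with $j$ spokes and circularly sparse $(k-j)$-subsets of the $2k$-cycle, under which the total spoke-tail distance is carried to the number of even elements of $S$. I would seek such a bijection by interleaving the $k$ rim vertices and $k$ rim edges around a $2k$-gon and reading off $S$ from the cut positions together with the spoke offsets within each arc, using the orientation and tail conventions already fixed in the text to pin down the alternation between odd and even labels that produces the exponent of $q$. Should a clean bijection prove elusive, a fallback is to verify $\mathcal{W}_k(q,t) = L_{2k}(q,t) - 1 - q^k$ by checking agreement for $k \in \{1,2\}$ and showing both sides satisfy the same recurrence: the right-hand side inherits the three-term recurrence of Proposition \ref{Lrecur} up to an inhomogeneous term $t(1+q^k)$ contributed by the correction $1+q^k$, and a direct arc-splitting argument on $W_{k+1}$, paralleling the necklace decomposition used for Proposition \ref{Lrecur}, should produce the matching inhomogeneous recurrence for $\mathcal{W}_k$.
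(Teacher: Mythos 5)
Your reduction is sound, and it is in fact the skeleton of the paper's own first proof: combining the desired identity with Theorem \ref{qtLucEll} reduces everything to the polynomial identity $\mathcal{W}_k(q,t) = L_{2k}(q,t) - 1 - q^k$, and your identification of the correction term $1+q^k$ with the two maximal circularly sparse subsets (all odds, all evens) is exactly the paper's accounting of the two sets it excludes from its bijection; your arc-decomposition display for $\mathcal{W}_k$ is essentially the paper's Proposition \ref{Wsy}. But the proposal stops precisely where the real work begins. The coefficientwise identity --- that spanning trees of $W_k$ with $j$ spokes, weighted by $q$ raised to the total spoke-tail distance, have the same generating function as circularly sparse $(k-j)$-subsets of $\{1,\dots,2k\}$ weighted by $q^{\#\,\mathrm{even}(S)}$ --- is never proved; you only say you ``would seek'' a bijection. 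The paper's proof consists of an explicit construction (the spoke $w_0w_i$ lies in $T_S$ iff neither $2i-2$ nor $2i-1$ is in $S$, and the rim edge $w_iw_{i+1}$ lies in $T_S$ iff $2i-1$ or $2i$ is in $S$) together with a genuinely nontrivial induction on $\#S$ showing that adjoining an odd element to $S$ leaves the $q$-weight unchanged while adjoining an even element multiplies it by $q$. That weight-preservation argument is the heart of the theorem, and it is absent from your proposal.

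The fallback does not close the gap either. Your bookkeeping is correct: setting $R_k = L_{2k}(q,t)-1-q^k$, Proposition \ref{Lrecur} gives $R_{k+1} = (1+q+t)R_k - qR_{k-1} + t(1+q^k)$, so it would indeed suffice to prove this inhomogeneous recurrence for $\mathcal{W}_k$ directly from the spanning-tree definition. But that recurrence is only asserted (``should produce the matching inhomogeneous recurrence''), and establishing it combinatorially is a task of the same order as the necklace decomposition behind Proposition \ref{Lrecur}, which required auxiliary Fibonacci-type polynomials and three lemmas to handle the configurations that fail to decompose. So on both of your routes the setup is correct and matches the paper, but the central combinatorial step --- either the explicit weight-preserving bijection or a direct proof of the wheel recurrence --- still has to be supplied before this counts as a proof.
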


Notice that this yields an exact interpretation of the $P_{i,k}$
polynomials as follows:

$$P_{i,k}(q) = \sum_{T\mathrm{~a~spanning~tree~of~}W_n\mathrm{~with~exactly~}i\mathrm{~spokes}}
q^{\mathrm{sum~of~spoke-tail~distance~in~}T}.$$

We prove this theorem in two different ways.  The first method
utilizes Theorem \ref{qtLucEll} and an analogue of the bijection
given in \cite{SpLuc} which relates perfect and imperfect matchings
of the circle of length $2k$ and spanning trees of $W_k$. Our second
proof uses the observation that we can categorize the spanning trees
based on the sizes of the various connected arcs on the rims. Since
this categorization corresponds to partitions, this method exploits
formulas for decomposing power symmetric function $p_k$ into a
linear combination of $h_{\lambda}$'s, as described in Section
\ref{circtabloid}.

\subsection{First proof of Theorem \ref{whser}: Bijective}
\label{firpfwhser}

There is a simple bijection between subsets of $\{1,2,\dots, 2n\}$
with size at most $n-1$ as well as no two elements circularly
consecutive and spanning trees of the wheel graph $W_n$. We use this
bijection to give our first proof of Theorem \ref{whser}. The
bijection is as follows:

Given a subset $S$ of the set $\{1,2,\dots, 2n-1,2n\}$ with no
circularly consecutive elements, we define the corresponding
spanning tree $T_S$ of $W_n$ (with the correct $q$ and $t$ weight)
in the following way:

1) We use the convention that the vertices of the graph $W_n$ are
labeled so that the vertices on the rim are $w_1$ through $w_n$, and
the central vertex is $w_0$.

2) We exclude the two subsets which consist of all the odds or all
the evens from this bijection.  Thus we only look at subsets which
contain $n-1$ or fewer elements.

3) For $1 \leq i \leq n$, an edge exists from $w_0$ to $w_i$ if and
only if neither $2i-2$ nor $2i-1$ (element $0$ is identified with
element $2n$) is contained in $S$.

4) For $1 \leq i \leq n$, an edge exists from $w_i$ to $w_{i+1}$
($w_{n+1}$ is identified with $w_1$) if and only if element $2i-1$
or element $2i$ is contained in $S$.

\vspace{2em}

\includegraphics[width = 2in, height =
2in]{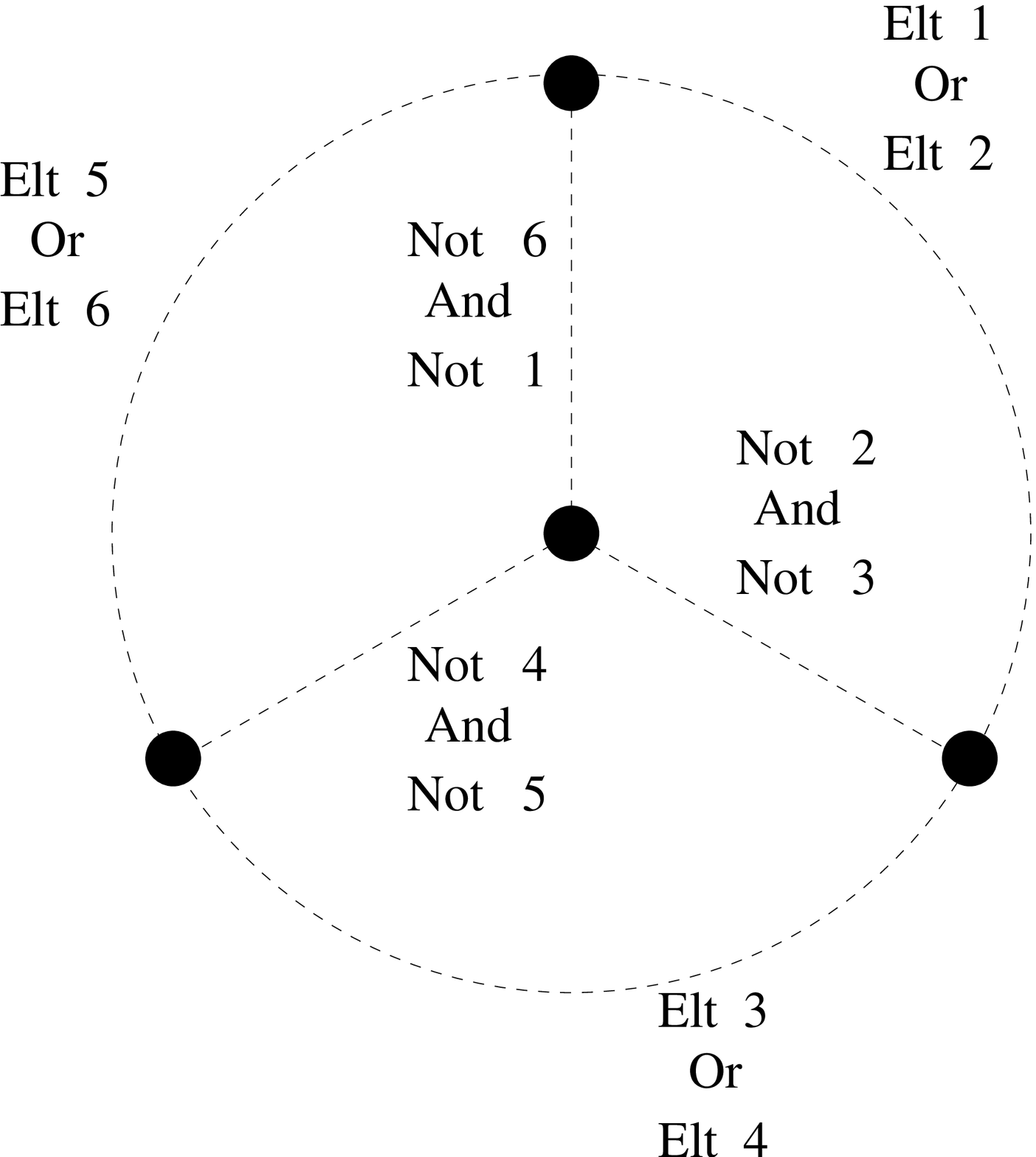} \hspace{2em}  $\bigg\{\hspace{0.5em}\bigg\}
\longleftrightarrow$ \hspace{1em}
\includegraphics[width = 1.8in, height = 1.3in]{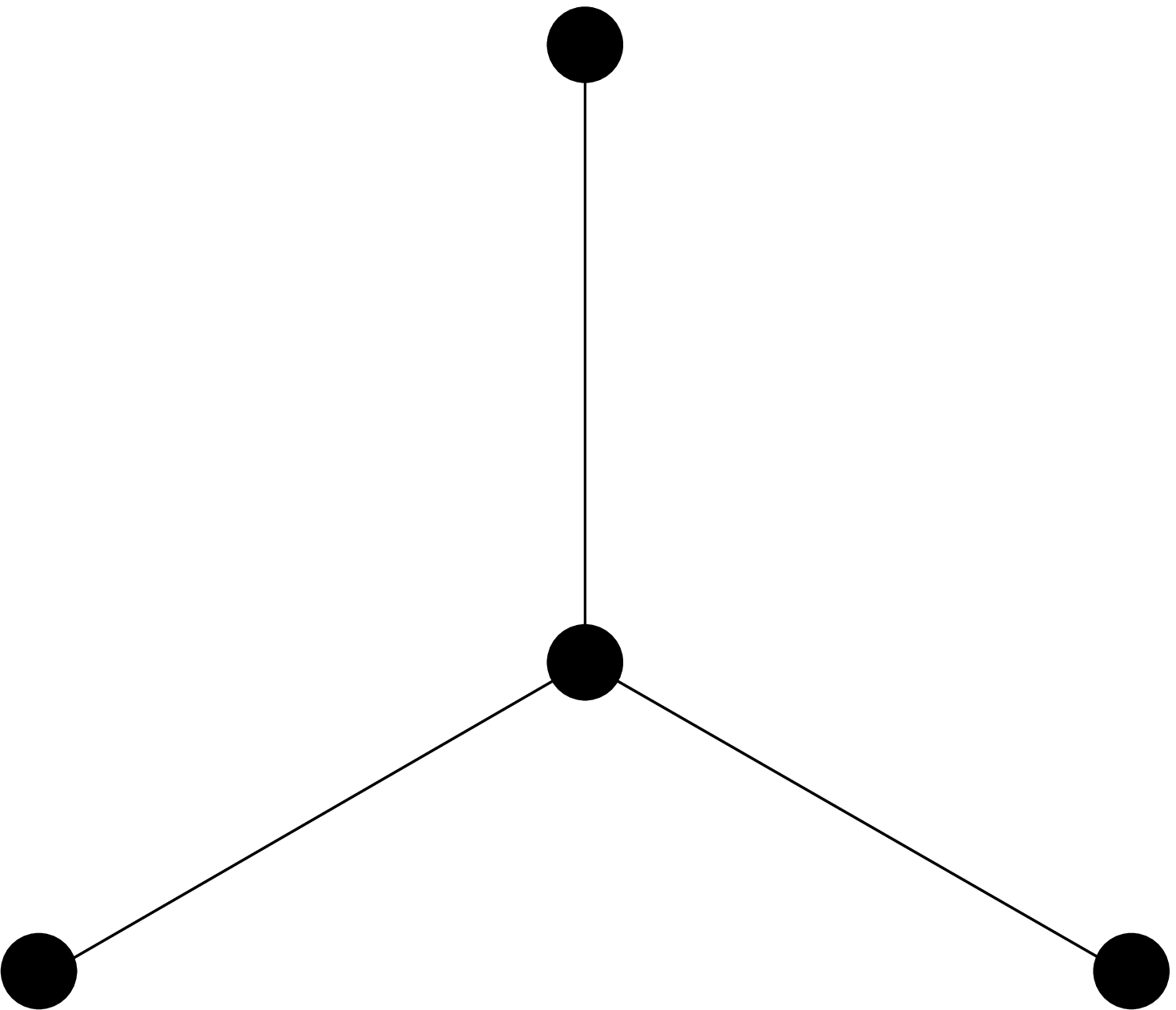}

$\bigg\{3\bigg\} \longleftrightarrow$
\includegraphics[width = 1.3in, height = 1.3in]{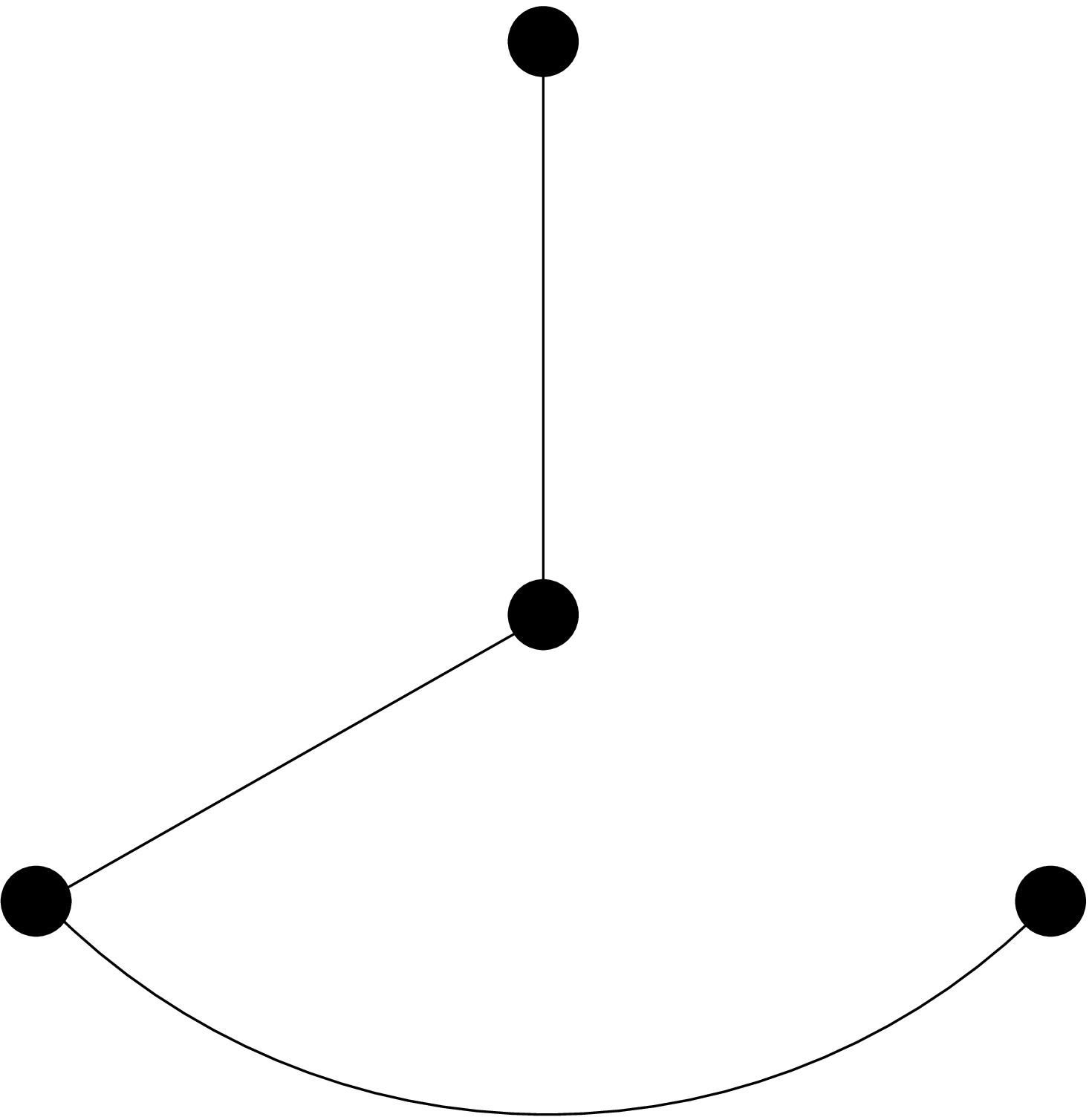} \hspace{2em}
 $\bigg\{2,5\bigg\} \longleftrightarrow$
\includegraphics[width = 1.8in, height = 1.3in]{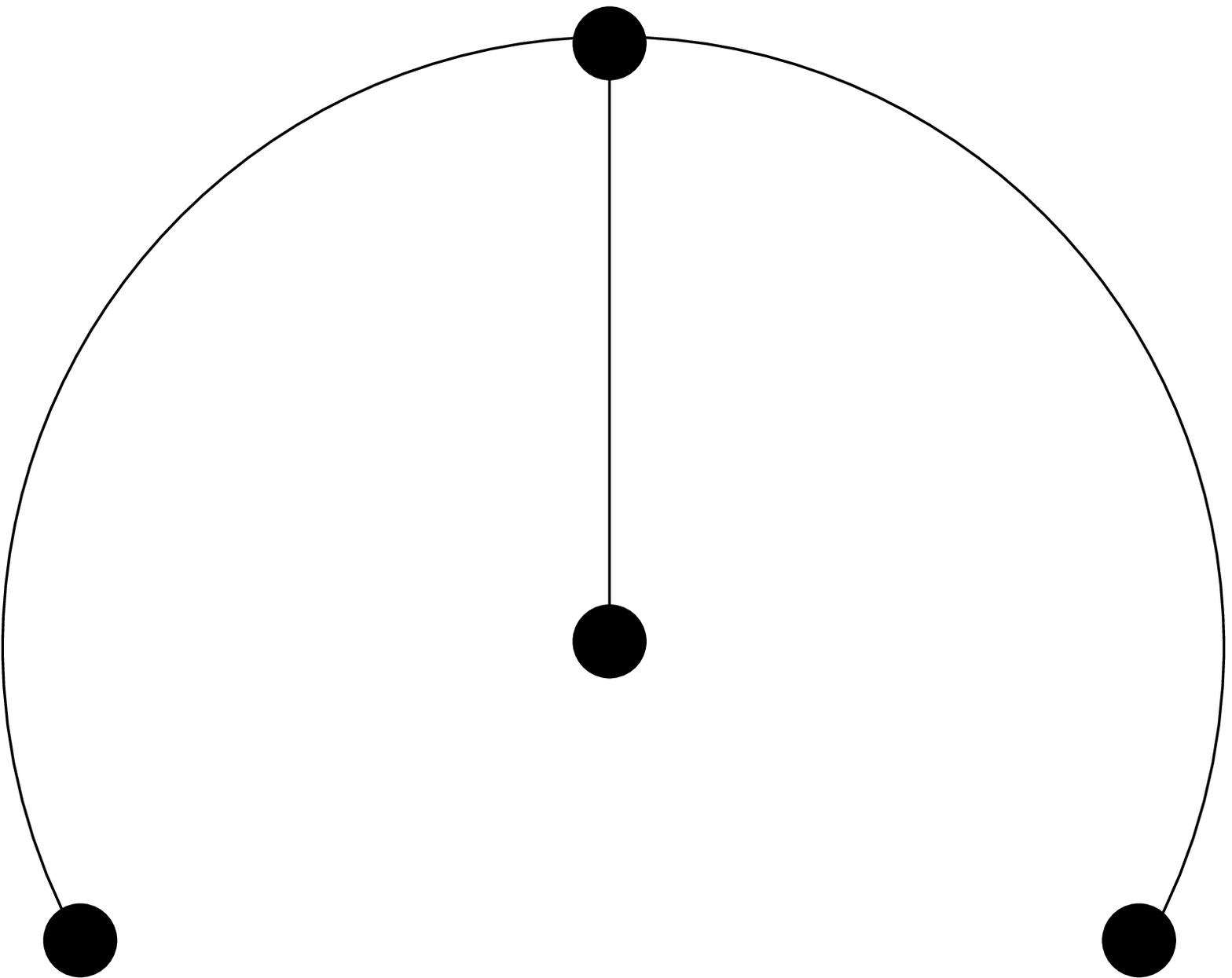}

\vspace{2em}

\begin{Prop}
Given this construction, $T_S$ is in fact a spanning tree of $W_n$
and further, tree $T_S$ has the same $q-$weights and $t-$weights as
set $S$.
\end{Prop}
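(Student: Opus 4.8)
The plan is to verify the three assertions in turn: that $T_S$ has the right number of edges, that it is in fact a spanning tree, and that its $q$- and $t$-weights agree with those of $S$. First I would reformulate rules $3$ and $4$ in terms of the two families of ``windows'' $A_i=\{2i-2,2i-1\}$ and $B_i=\{2i-1,2i\}$ (indices in $\z/2n\z$): the spoke $w_0w_i$ is present iff $A_i\cap S=\emptyset$, and the rim edge $w_iw_{i+1}$ is present iff $B_i\cap S\neq\emptyset$. Each of $\{A_i\}$ and $\{B_i\}$ partitions $\{1,\dots,2n\}$ into $n$ pairs of consecutive elements, so the hypothesis that $S$ has no circularly consecutive elements guarantees that each window meets $S$ in at most one point. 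This immediately yields two bijections between $S$ and, respectively, the met $B$-windows and the occupied $A$-windows; hence $T_S$ has exactly $\#S$ rim edges and $n-\#S$ spokes, for a total of $n$ edges, and the rim has exactly $n-\#S$ gaps.

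Next I would establish the spanning-tree property. Deleting the $n-\#S$ gap edges (there is at least one, since $\#S\leq n-1$) from the rim cycle leaves a forest, so the number of arcs equals the number of gaps, namely $n-\#S$, which is also the number of spokes. The crucial step is the lemma that no arc carries two spokes: if $w_j$ and $w_{j'}$ with $j<j'$ both had spokes inside one arc, then the $j'-j$ rim edges joining them would be triggered by $j'-j$ distinct elements of $S$ lying in the interval $[2j,\,2j'-3]$ (the two endpoints $2j-1$ and $2j'-2$ being excluded by the spoke conditions at $w_j$ and $w_{j'}$); but an interval of $2(j'-j-1)$ consecutive integers contains at most $j'-j-1$ pairwise non-consecutive elements, a contradiction. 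Since the $n-\#S$ spokes are then distributed at most one per arc among exactly $n-\#S$ arcs, each arc carries precisely one spoke. This makes $T_S$ connected (every arc hangs from the hub) and acyclic (a cycle would require either all $n$ rim edges, impossible as $\#S<n$, or two spokes on one arc), so $T_S$ is a spanning tree of $W_n$.

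It remains to match the weights. The $t$-weight is immediate: the exponent of $t$ in $T_S$ is the number of spokes, $n-\#S$, which equals the $t$-exponent $\lfloor 2n/2\rfloor-\#S=n-\#S$ that $S$ carries in the sum defining $L_{2n}$. For the $q$-weight I would argue arc by arc, proving that the spoke--tail distance of an arc equals the number of even elements of $S$ triggering that arc's rim edges; summing over arcs then gives total $q$-exponent $=\#\{\text{even elements of }S\}$, since every element of $S$ triggers exactly one rim edge. Fix an arc $w_a,\dots,w_b$ and let $s_i\in\{2i-1,2i\}$ be the unique element triggering rim edge $w_iw_{i+1}$ for $a\le i\le b-1$; set $v_i=0$ if $s_i$ is odd and $v_i=1$ if $s_i$ is even, so that $s_i$ occupies the spoke-window $A_{i+v_i}$. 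The pattern $v_i=1,\ v_{i+1}=0$ is forbidden, since it would force the consecutive elements $2i,2i+1$ both into $S$; hence $(v_i)$ is nondecreasing, of the shape $0^p1^q$ with $p+q=b-a$. The occupied spoke-windows are then $A_a,\dots,A_{a+p-1}$ and $A_{a+p+1},\dots,A_b$, leaving $A_{a+p}=A_{b-q}$ as the sole empty window, so the unique spoke sits at $w_{b-q}$. With the clockwise orientation the tail (sink) of the arc is $w_b$, giving spoke--tail distance $b-(b-q)=q$, precisely the number of even $s_i$.

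The hard part is this last $q$-weight identity: one must locate the spoke inside each arc and show its distance to the clockwise tail counts exactly the even triggers. Both it and the one-spoke-per-arc lemma hinge on the non-consecutiveness of $S$ --- once through the ``antichain-in-an-interval'' bound, and once through the monotone $0^p1^q$ parity pattern. A final routine point is the wraparound: since $\#S\le n-1$ forces at least one gap, I would cut the rim at a gap so that no arc crosses the seam between $w_n$ and $w_1$, after which the local arguments above apply verbatim, the parities of the $s_i$ being fixed by the global labeling.
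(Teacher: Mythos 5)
Your proof is correct, and it reaches the proposition by a genuinely different route than the paper's own argument. The edge count and the $t$-weight step coincide (your $A$- and $B$-windows are exactly the paper's pairs $\{2i-2,2i-1\}$ and $\{2i-1,2i\}$), but the two substantive steps diverge. For the tree property, the paper only checks that every vertex of $W_n$ is incident to an edge of $T_S$ and then appeals to the edge count $n = \#V(W_n)-1$; taken literally this is not sufficient (a subgraph of $W_n$ with $n$ edges meeting every vertex can still contain a $3$-cycle formed by two spokes and a rim edge, with the remaining edges covering the other vertices but disconnected from it), so your one-spoke-per-arc lemma --- proved via the bound that $2m$ consecutive integers contain at most $m$ pairwise non-consecutive elements --- supplies connectivity and acyclicity explicitly and in fact closes a small gap in the paper's reasoning. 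For the $q$-weight, the paper argues dynamically, by induction on $\#S$: every admissible $S$ is built up one element at a time, and adjoining an odd element $2i-1$ trades the spoke at $w_i$ for the rim edge $w_iw_{i+1}$ while leaving all spoke--tail distances unchanged, whereas adjoining an even element $2i$ trades the spoke at $w_{i+1}$ for the same rim edge and increases the total distance by exactly one. Your argument is the static counterpart of this: the monotone parity pattern $0^p1^q$ pins the unique spoke of an arc at $w_{b-q}$, at distance from the tail $w_b$ equal to the number of even triggers, and summing over arcs gives the count of even elements directly. What the paper's induction buys is brevity and a transparent local mechanism (each even element pushes a spoke one step away from its tail, which is \emph{why} the exponent of $q$ counts even elements); what your version buys is an explicit closed-form location of the spoke inside each arc, no need to verify that every admissible $S$ can be reached by admissible one-element extensions, and a fully self-contained proof that $T_S$ is a spanning tree. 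Note finally that your identification of the tail with the clockwise-last vertex $w_b$ is precisely the orientation convention forced by the paper's induction step (with the opposite convention, adjoining an even element would leave the weight unchanged), so the two computations are consistent.
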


\begin{proof}
Suppose that set $S$ contains $k$ elements.  From our above
restriction, we have that $0 \leq k \leq n-1$. Since $S$ is a
$k$-subset of a $2n$ element set with no circularly consecutive
elements, there are $(n-k)
\mathrm{~pairs~}\{2i-2,2i-1\}\mathrm{~with~neither~element~in~set~}S$,
$\mathrm{and~}k
\mathrm{~pairs~}\{2i-1,2i\}\mathrm{~with~one~element~in~set~}S$.
Consequently, subgraph $T_S$ consists of exactly $(n-k)+k=n$ edges.
Since $n=(\#~$vertices of $W_n)-1$, to prove $T_S$ is a spanning
tree, it suffices to show that each vertex of $W_n$ is included. For
every oddly-labeled element of $\{1,2,\dots, 2n\}$, i.e. $2i-1$ for
$1\leq i \leq n$, we have the following rubric:

1) If $(2i-1)\in S$ then the subgraph $T_S$ contains the edge from
$w_i$ to $w_{i+1}$.

2) If $(2i-1)\not\in S$ and additionally $(2i-2)\not\in S$, then
$T_S$ contains the spoke from $w_0$ to $w_i$.

3) If $(2i-1)\not\in S$ and additionally $(2i-2)\in S$, then $T_S$
contains the edge from $w_{i-1}$ to $w_i$.

\noindent Since one of these three cases happens for all $1\leq
i\leq n$, vertex $w_i$ is incident to an edge in $T_S$.  Also, the
central vertex, $w_0$, has to be included since by our restriction,
$0 \leq k \leq n-1$, there are $(n-k) \geq 1$ pairs $\{2i-2,2i-1\}$
which contain no elements of $S$.

The number of spokes in $T_S$ is $(n-k)$ which agrees with the
$t-$weight of a set $S$ with $k$ elements.  Finally, we prove that
the $q$-weight is preserved, by induction on the number of elements
in the set $S$.  If set $S$ has no elements, the $q-$weight should
be $q^0$, and spanning tree $T_S$ will consist of $n$ spokes which
also has $q-$weight $q^0$.

Now given a $k$ element subset $S$ ($0\leq k\leq n-2$), it is only
possible to adjoin an odd number if there is a sequence of three
consecutive numbers starting with an even, i.e. $\{2i-2,2i-1,2i\}$,
which is disjoint from $S$.  Such a sequence of $S$ corresponds to a
segment of $T_S$ where a spoke and tail of an arc intersect.  (Note
this includes the case of vertex $w_i$ being an isolated vertex.)

In this case, subset $S^\prime = S \cup \{2i-1\}$ corresponds to
$T_{S^\prime}$, which is equivalent to spanning tree $T_S$ except
that one of the spokes $w_0$ to $w_i$ has been deleted and replaced
with an edge from $w_i$ to $w_{i+1}$.  The arc corresponding to the
spoke from $w_i$ will now be connected to the next arc, clockwise.
Thus the distance between the spoke and the tail of this arc will
not have changed, hence the $q-$weight of $T_{S^\prime}$ will be the
same as the $q-$weight of $T_{S}$.

Alternatively, it is only possible to adjoin an even number to $S$
if there is a sequence $\{2i-1,2i,2i+1\}$ which is disjoint from
$S$. Such a sequence of $S$ corresponds to a segment of $T_S$ where
a spoke meets the \emph{end} of an arc.  (Note this includes the
case of vertex $w_i$ being an isolated vertex.)

Here, subset $S^{\prime\prime} = S \cup \{2i\}$ corresponds to
$T_{S^{\prime\prime}}$, which is equivalent to spanning tree $T_S$
except that one of the spokes $w_0$ to $w_{i+1}$ has been deleted
and replaced with an edge from $w_i$ to $w_{i+1}$. The arc
corresponding to the spoke from $w_{i+1}$ will now be connected to
the \emph{previous} arc, clockwise. Thus the cumulative change to
the total distance between spokes and the tails of arcs will be an
increase of one, hence the $q-$weight of $T_{S^{\prime\prime}}$ will
be $q^1$ times the $q-$weight of $T_{S}$.

Since any subset $S$ can be built up this way from the empty set,
our proof is complete via this induction.
\end{proof}

Since the two sets we excluded, of size $k$ had $(q,t)-$weights
$q^0t^0$ and $q^kt^0$ respectively, we have proven Theorem
\ref{whser}.

\subsection{Second proof of Theorem \ref{whser}: Via generating
function identities} \label{circtabloid}

For our second proof of Theorem \ref{whser}, we consider writing the
zeta function as an ordinary generating function instead, i.e.
\begin{eqnarray} \label{zetaAsH} Z(C,T) = 1+ \sum_{k\geq 1}H_k T^k.\end{eqnarray}  In such a form, the
$H_k$'s are positive integers which enumerate the number of
effective $C(\f_q)$-divisors of degree $k$, as noted in several
places, such as \cite{Mor}.

\begin{Prop} \label{Nsy}
\begin{eqnarray} \label{Npowhog} N_k = \sum_{\lambda \vdash k} (-1)^{l(\lambda)-1}{k \over
l(\lambda)}{l(\lambda) \choose d_1,~d_2,~\dots~d_m}
\prod_{i=1}^{l(\lambda)} H_{\lambda_i}.\end{eqnarray}
\end{Prop}

\begin{proof}

Comparing formulas (\ref{zetaF}) and (\ref{zetaAsH}) for $Z(C,T)$
and taking logarithms, we obtain
\begin{eqnarray*}
{N_k \over k} &=& \log Z(C,T)\bigg|_{T^k} = \log \bigg(1+\sum_{n\geq
1} H_n T^n\bigg)\bigg|_{T^k} = \sum_{m\geq 1} {(-1)^{m-1}
\bigg(\sum_{n= 1}^k H_n T^n\bigg)^m \over m}\bigg|_{T^k}.
\end{eqnarray*}
To obtain the coefficient of $T^k$ in \begin{eqnarray}
\label{insideprod} \bigg(H_1T+H_2T^2+\dots +H_k
T^k\bigg)^m,\end{eqnarray} we first select a partition of $k$ with
length $\ell(\lambda)=m$.  In other words, $\lambda$ is a vector of
positive integers satisfying $\lambda_1\geq \lambda_2\geq \dots \geq
\lambda_m$.  Each occurrence of $\lambda_i=j$ in this partition
corresponds to choosing summand $H_jT^j$ in the $i$th term in
product (\ref{insideprod}).  Secondly, since the order of these
terms does not matter, we include multinomial coefficients. Finally,
multiplying through by $k$ yields formula (\ref{Npowhog}) for $N_k$.
\end{proof}

\begin{Rem}
The same manipulations done above for the generating functions are
analogous to identities which relate the power symmetric functions
and homogeneous symmetric functions.  See for example \cite{brick},
\cite{MacDon}, or \cite[pg. 21]{EC2}.  This is no coincidence, and
in particular the terminology of plethysm provides a rigorous
connection between symmetric functions and the enumeration of points
on curves. See Section \ref{bifibsec} below, \cite{Gars}, or
\cite{MusThesis} for more details on plethysm and this connection.
\end{Rem}

\begin{Rem}
The above algebraic reasoning can also be translated into a
combinatorial description of how points on $C$ over $\f_{q^k}$ can
be enumerated using inclusion-exclusion, and points over smaller
extension fields. See \cite[Chap 4.]{MusThesis} for more details.
\end{Rem}

We now specialize to the case of $g=1$.  Here we can write $H_k$ in
terms of $N_1$ and $q$.  We expand the series \begin{eqnarray}
\label{Zsimp}Z(E,T) = {1-(1+q-N_1)T + qT^2 \over (1-T)(1-qT)} = 1 +
{N_1T \over (1-T)(1-qT)}\end{eqnarray} with respect to $T$, and
obtain $H_0 = 1$ and $H_k = N_1(1+q+q^2+\dots+q^{k-1})$ for $k\geq
1$. Plugging these into formula (\ref{Npowhog}), we get polynomial
formulas for $N_k$ in terms of $q$ and $N_1$
$$N_k = \sum_{\lambda \vdash k} (-1)^{l(\lambda)-1}{k \over l(\lambda)}{l(\lambda)
\choose d_1,~d_2,~\dots~d_k} \bigg(\prod_{i=1}^{l(\lambda)}
(1+q+q^2+\dots + q^{\lambda_i-1})\bigg)N_1^{l(\lambda)}.$$
Consequently, Theorem \ref{whser} is true if and only if we can
replace $N_1$ with $-t$ and then multiply by $(-1)$ and get a true
expression for $\mathcal{W}_k$, the $(q,t)$-weighted number of
spanning trees on the wheel graph $W_k$.
We thus provide the following combinatorial argument for the
required formula.

\begin{Prop} \label{Wsy}
\begin{eqnarray} \label{Wform} ~~~~~\mathcal{W}_k = \sum_{\lambda \vdash k} {k \over
l(\lambda)}{l(\lambda) \choose d_1,~d_2,~\dots~d_k}
\bigg(\prod_{i=1}^{l(\lambda)} (1+q+q^2+\dots +
q^{\lambda_i-1})\bigg)t^{l(\lambda)}.\end{eqnarray}
\end{Prop}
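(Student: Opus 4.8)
The plan is to prove formula (\ref{Wform}) directly by a combinatorial classification of the spanning trees of $W_k$ according to the partition determined by their arc structure on the rim, thereby showing that the right-hand side is exactly the generating function $\mathcal{W}_k(q,t)$. Recall that a spanning tree $T$ of $W_k$ decomposes the rim into a collection of disjoint arcs, each arc meeting exactly one spoke. If the arcs have lengths $\lambda_1 \geq \lambda_2 \geq \dots \geq \lambda_m$ (where $m = l(\lambda)$ equals the number of arcs, which equals the number of spokes), then since the total number of rim vertices is $k$, the multiset of arc lengths is precisely a partition $\lambda \vdash k$. This is the organizing principle: I would group the sum defining $\mathcal{W}_k$ by the \emph{cyclic composition} of $k$ that records the arc lengths in clockwise order, and then show that the contribution of each underlying partition $\lambda$ matches the summand in (\ref{Wform}).

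The key steps are as follows. First, I would fix a partition $\lambda$ and account for the $q$-weight. For a single arc of length $\ell$, the spoke can meet any of its $\ell$ vertices, and the spoke-tail distance ranges over $0,1,\dots,\ell-1$; summing the $q$-weight $q^{\textrm{spoke-tail distance}}$ over these choices gives exactly $1+q+q^2+\dots+q^{\ell-1}$. Taking the product over all arcs reproduces the factor $\prod_{i=1}^{l(\lambda)}(1+q+\dots+q^{\lambda_i-1})$, and the $t$-weight contributes $t^{l(\lambda)}$ since there are $l(\lambda)$ spokes. Second, I would count the number of distinct ways to arrange the arcs of lengths given by $\lambda$ around the oriented rim. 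Since the rim is a labeled cycle of $k$ vertices, arranging $m$ arcs of lengths forming the partition $\lambda$ around it is a cyclic-arrangement count; the standard necklace-counting identity yields the coefficient $\frac{k}{l(\lambda)}\binom{l(\lambda)}{d_1,d_2,\dots,d_k}$, where $d_j$ is the number of parts of $\lambda$ equal to $j$. Combining the arrangement count with the weight computation gives precisely the $\lambda$-summand of (\ref{Wform}), and summing over all $\lambda \vdash k$ completes the identity.

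The main obstacle I anticipate is justifying the combinatorial coefficient $\frac{k}{l(\lambda)}\binom{l(\lambda)}{d_1,\dots,d_k}$ as the exact number of cyclic placements of the arcs on the labeled rim. One must be careful that the rim vertices are labeled (so rotations give genuinely different trees) while the arcs of equal length are indistinguishable as blocks, and that placing a cyclic sequence of arc-lengths on $k$ labeled positions is governed by the same enumeration that produces the factor $\frac{k}{l(\lambda)}$ found in the power-sum-to-complete-homogeneous transition. The cleanest route is to observe that this is exactly the coefficient appearing in Proposition \ref{Nsy} with $H_{\lambda_i}$ replaced by the arc-weight $1+q+\dots+q^{\lambda_i-1}$ and with the sign suppressed; since that proposition was derived from the logarithmic expansion of the zeta function, I expect the arc-arrangement count to match term by term once the correspondence between arcs and the factors $H_{\lambda_i}$ is made precise.

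Granting this matching, formula (\ref{Wform}) follows, and comparing with the specialization of Proposition \ref{Nsy} via $H_k = N_1(1+q+\dots+q^{k-1})$ shows that substituting $N_1 \mapsto -t$ and multiplying by $-1$ carries the expression for $N_k$ exactly onto $\mathcal{W}_k(q,t)$, which is the content of Theorem \ref{whser}.
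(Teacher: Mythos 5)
Your overall strategy is the same as the paper's second proof: classify spanning trees of $W_k$ by the partition $\lambda \vdash k$ recording arc lengths, note that each arc of length $\ell$ contributes $1+q+\cdots+q^{\ell-1}$ (one term per choice of where its spoke attaches) and each spoke a factor of $t$, and multiply by the number of ways to lay arcs with length multiset $\lambda$ around the labeled rim. Your weight computation is correct and matches the paper. The problem is the arrangement count, which you yourself flag as the main obstacle and then never actually close. Your proposed ``cleanest route'' --- reading the coefficient off Proposition \ref{Nsy} --- cannot work: that proposition is a purely algebraic identity obtained by expanding $\log Z(C,T)$, and its coefficient $\frac{k}{l(\lambda)}\binom{l(\lambda)}{d_1,\dots,d_k}$ arises from counting \emph{compositions} (orderings of the parts of $\lambda$) in the $m$-th power of a power series. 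It says nothing about spanning trees, so it cannot certify that the same number counts arc placements on the rim; asserting that the two ``match term by term'' is precisely the content of Proposition \ref{Wsy} (equivalently of Theorem \ref{whser} after $N_1 \mapsto -t$), so this route is circular. Likewise, ``the standard necklace-counting identity'' is not a proof: the placements to be counted live on a \emph{labeled} cycle, not on necklaces up to rotation, and the quantity $\frac{1}{l(\lambda)}\binom{l(\lambda)}{d_1,\dots,d_k}$ is not even an integer in general (for $\lambda = (2,2,1,1)$ it equals $3/2$, while the number of cyclic orderings of that multiset is $2$), so no argument of the form ``count cyclic orders, then multiply by $k$'' can be made literal without care.

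The gap is filled by a short double-counting argument, which is what the paper's second and third steps encode by ``picking one arc to start with, without loss of generality since we are on a circle,'' and then choosing a starting vertex. Count pairs consisting of a rim decomposition together with a distinguished arc. Such a pair is uniquely determined by the tail vertex of the distinguished arc ($k$ choices, since rim vertices are labeled) together with the sequence of arc lengths read clockwise starting from that arc (a rearrangement of $\lambda$, giving $\binom{l(\lambda)}{d_1,\dots,d_k}$ choices). Since every decomposition into $l(\lambda)$ arcs arises from exactly $l(\lambda)$ such pairs, the number of decompositions is $\frac{k}{l(\lambda)}\binom{l(\lambda)}{d_1,\dots,d_k}$, and this is valid regardless of any rotational symmetries of the length sequence. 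With this lemma inserted in place of your appeal to Proposition \ref{Nsy}, your argument is complete and coincides with the paper's proof.
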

\begin{proof}
We construct a spanning tree of $W_k$ from the following choices:
First we choose a partition $\lambda = 1^{d_1}2^{d_2}\cdots k^{d_m}$
of $k$.  We let this dictate how many arcs of each length occur,
i.e. we have $d_1$ isolated vertices, $d_2$ arcs of length $2$, etc.
Note that this choice also dictates the number of spokes, which is
equal to the number of arcs, i.e. the length of the partition.

Second, we pick an arrangement of the $l(\lambda)$ arcs on the
circle. After picking one arc to start with, without loss of
generality since we are on a circle, we have $${1 \over l(\lambda)}
{l(\lambda) \choose d_1,~d_2,~\dots~d_m}$$ choices for such an
arrangement.
Third, we pick which vertex $w_i$ of the rim to start with. There
are $k$ such choices.
Fourth, we pick where the $l(\lambda)$ spokes actually intersect the
arcs.  There are $|$arc$|$ choices for each arc, and the $q-$weight
of this sum is $(1+q+q^2+\dots+q^{|\mathrm{arc}|})$ for each arc.
Summing up all the possibilities yields (\ref{Wform}) as desired.
\end{proof}

Thus we have given a second proof of Theorem \ref{whser}.

\section{More on bivariate Fibonacci polynomials via duality}
\label{bifibsec}

In this section we explore further properties of various sequences
of coefficients arising from the zeta function of a curve, and also
more properties regarding bivariate Fibonacci polynomials.  Our
tools for such investigations consists of two different
manifestations of duality.

\subsection{Duality between the symmetric functions $h_k$ and $e_k$}

Given the usefulness of symmetric functions in discovering the
identities described by Propositions \ref{Nsy} and \ref{Wsy}, we now
illustrate further applications of the plethystic view of the zeta
function.

The symmetric functions that we utilize in this paper are the
power symmetric functions $p_k$, the complete homogeneous symmetric
functions $h_k$, and the elementary symmetric functions $e_k$. Given
the alphabet $\{x_1,x_2,\dots, x_n\}$, each of these
can be written as \begin{eqnarray*} p_k &=& x_1^k + x_2^k + \dots + x_n^k, \\
h_k &=& \sum_{\stackrel{0 \leq i_1, i_2, \dots, i_n \leq
k}{i_1+i_2+\dots + i_n
= k}} x_1^{i_1}x_2^{i_2}\cdots x_n^{i_n}, \mathrm{~~and} \\
e_k &=& \sum_{1 \leq i_1 < i_2 < \dots < i_k \leq n}
x_{i_1}x_{i_2}\cdots x_{i_k}.
\end{eqnarray*}
In general, a plethystic substitution of a formal power series
$F(t_1,t_2,\dots)$ into a symmetric polynomial $A(x)$, denoted as
$A[E]$, is obtained by setting $$A[E] =
Q_A(p_1,p_2,\dots)|_{p_k\rightarrow E(t_1^k,t_2^k,\dots)},$$ where
$Q_A(p_1,p_2,\dots)$ gives the expansion of $A$ in terms of the
power sums basis $\{p_\alpha\}_\alpha$.  The main example of this
technique that we use is $N_k = p_k[1+q-\alpha_1-\alpha_2-\dots -
\alpha_{2g}]$ for a genus $g$ curve.

To begin, we use the following well-known symmetric function
identity

\begin{eqnarray*}
\prod_{k\in \mathcal{I}} {1 \over 1-
t_kT} &=& \exp\bigg(\sum_{n\geq 1} p_n {T^n \over n}  \bigg) \\
&=& \sum_{n\geq 0} h_nT^n \\
&=& {1 \over \sum_{n \geq 0} (-1)^n e_nT^n}
\end{eqnarray*}
where $h_n$, $p_n$, and $e_n$ are symmetric functions in the
variables $\{t_k\}_{k\in\mathcal{I}}$. \cite[pgs. 21, 296]{EC2}
The zeta function $Z(C,T)$ is equal to all of these for a certain
choice of $\{t_k\}_{k\in \mathcal{I}}$ and consequently, we get that
\begin{eqnarray}\label{ZasE} Z(C,T) = {1 \over \sum_{k\geq 0} (-1)^k E_k \cdot T^k}\end{eqnarray}
where $E_k = e_k[1+q-\alpha_1-\alpha_2-\dots - \alpha_{2g}]$.

\begin{Rem}
Like the $N_k$'s and $H_k$'s, the $E_k$'s also have an algebraic
geometric interpretation, namely $E_k$ equals the \emph{signed}
number of positive divisors $D$ of degree $k$ on curve $C$
\emph{such that no prime divisor appears more than once in} $D$.
This follows from the reciprocity between $h_k$ and $e_k$ which is
analogous to the reciprocity between \emph{choose} and
\emph{multi-choose}, i.e. choice with replacement.
\end{Rem}

Recall that in Section \ref{LucSec}, we defined $\tilde{F}_k(q,t)$,
i.e. the twisted $(q,t)$-Fibonacci polynomials.  Here we define
$F_k(q,t)$, an alternative bivariate analogue of the Fibonacci
numbers. The definition of $F_k(q,t)$ is identical to that of
$\tilde{F}_k(q,t)$ except for the weighting of parameter $t$.

\begin{Def}
We define the $(q,t)$-{\bf Fibonacci polynomials} to be the sequence
of polynomials in variables $q$ and $t$ given by
$${F}_k(q,t) = \sum_{S \subseteq \{1,2,\dots,k-1\}~~:~~ S \cap (S_1^{(k-1)} - \{1\}) = \emptyset}
q^{\#\mathrm{~even~elements~in~}S}~t^{\lceil {k\over 2}\rceil-\#S}.$$
\end{Def}

From this definition we obtain the following formulas for the
$E_k$'s in the elliptic case.
\begin{Thm} \label{EkasFib}
If $C$ is a genus one curve, and the $E_k$'s are as above, then for
$n\geq 1$, $E_{-n}=0$, $E_0=1$, and
$$E_n = (-1)^n F_{2n-1}(q,-N_1)$$ where $E_k$ and $F_k(q,t)$ are as
defined above.
\end{Thm}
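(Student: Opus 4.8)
The plan is to mirror the proof of Theorem \ref{qtLucEll}: pin down a three-term recurrence satisfied by the $E_n$'s, show that the candidate expressions $(-1)^nF_{2n-1}(q,-N_1)$ obey the same recurrence, and then match two initial conditions. The assertions $E_{-n}=0$ and $E_0=1$ are immediate, since $e_k$ vanishes for $k<0$ and the generating function (\ref{ZasE}) has constant term $1$; so the content is the formula for $n\geq 1$. First I would make the left-hand side explicit. Combining (\ref{ZasE}) with the genus-one expansion (\ref{Zsimp}) gives
$$\sum_{k\geq 0}(-1)^kE_kT^k=\frac{1}{Z(E,T)}=\frac{(1-T)(1-qT)}{1-(1+q-N_1)T+qT^2}=1-\frac{N_1T}{1-(1+q-N_1)T+qT^2},$$
the last equality because the numerator equals the denominator minus $N_1T$. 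Reading off coefficients yields $E_0=1$, $E_1=N_1$, $E_2=N_1^2-(1+q)N_1$, and, since the right-hand side is $1$ minus a rational function whose denominator is $1-(1+q-N_1)T+qT^2$, for $n\geq 3$ the $E_n$ satisfy
$$E_n=-(1+q-N_1)E_{n-1}-qE_{n-2}.$$
The inhomogeneous numerator only perturbs the coefficients of $T^1$ and $T^2$, which is why the clean recurrence starts at $n=3$ and we retain $E_1,E_2$ as initial data.

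Next I would establish the companion recurrence
$$F_{2n-1}(q,t)=(1+q+t)\,F_{2n-3}(q,t)-q\,F_{2n-5}(q,t),\qquad n\geq 3,$$
which is exactly the recurrence (\ref{l2nrecur}) that the $L_{2k}$ obey. The cleanest route passes through the twisted polynomials $\tilde F_k$ via $F_k(q,t)=t^{\lceil k/2\rceil}\tilde F_k(q,t^{-1})$. A linear chain of beads with no two consecutive black, weighting a black bead in an even position by $q$ and every black bead by $t$, is governed by a two-periodic transfer matrix; multiplying the odd-position and even-position steps produces a single matrix whose characteristic polynomial is $\lambda^2-(1+t+qt)\lambda+qt^2$. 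By Cayley--Hamilton this forces
$$\tilde F_{2n-1}(q,t)=(1+t+qt)\,\tilde F_{2n-3}(q,t)-qt^2\,\tilde F_{2n-5}(q,t),$$
which is precisely (\ref{l2tnrecur}); substituting $t\mapsto t^{-1}$ and multiplying through by the appropriate power of $t$ (the $\lceil k/2\rceil$ bookkeeping) converts this into the displayed recurrence for $F_{2n-1}$. Alternatively one can argue directly by deleting the last one or two beads of the chain, in the spirit of Lemmas \ref{quant1}--\ref{quant2}.

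Finally I would set $t=-N_1$ and compare. Writing $\mathcal F_n=(-1)^nF_{2n-1}(q,-N_1)$, the $F$-recurrence becomes $\mathcal F_n=-(1+q-N_1)\mathcal F_{n-1}-q\mathcal F_{n-2}$ for $n\geq 3$, identical to the $E_n$ recurrence, while the base cases match: $F_1=t$ gives $\mathcal F_1=N_1=E_1$, and $F_3=t^2+(1+q)t$ gives $\mathcal F_2=N_1^2-(1+q)N_1=E_2$. A straightforward induction then yields $E_n=\mathcal F_n$ for all $n\geq 1$.

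The hard part will be the second step: proving the three-term recurrence for the odd-indexed Fibonacci polynomials while correctly tracking the position-parity of the $q$-weighting together with the $t\leftrightarrow t^{-1}$ twist relating $F_k$ and $\tilde F_k$. Once that recurrence is in hand --- and it coincides with the Lucas recurrence already proved in Proposition \ref{Lrecur} --- everything else is routine verification of initial conditions.
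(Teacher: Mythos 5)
Your proposal is correct, and its skeleton is exactly the paper's: the paper likewise proves the theorem by extracting the three-term recurrence for the $(-1)^nE_n$'s from the generating function $\sum_{n\geq 0}(-1)^nE_nT^n = (1-T)(1-qT)\big/\big(1-(1+q-N_1)T+qT^2\big)$ (Proposition \ref{plethEg1}), establishing the companion recurrence $F_{2n+1}=(1+q+t)F_{2n-1}-qF_{2n-3}$ (Proposition \ref{FibRecurEk}), and then matching initial conditions. The only genuine divergence is how you prove the Fibonacci recurrence: the paper argues directly, viewing $(1+q+t)F_{2n-1}$ as concatenations of a legal chain of $2n-2$ beads with a chain of $2$ beads and correcting the overcount by $qF_{2n-3}$, whereas you route through the twisted polynomials $\tilde F_k$ with a two-periodic transfer matrix and Cayley--Hamilton. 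Your matrix data are right (the product of the odd-step and even-step matrices has trace $1+t+qt$ and determinant $qt^2$), and the $t\leftrightarrow t^{-1}$ conversion between $\tilde F_k$ and $F_k$ works out as you claim; but note one boundary subtlety: writing $\tilde F_{2m+1}=u^{T}Q^{m-1}w$, Cayley--Hamilton yields the recurrence only when all three exponents of $Q$ are nonnegative, i.e.\ for the instances relating $\tilde F_7,\tilde F_5,\tilde F_3$ and higher. The instance your induction needs first, $F_5=(1+q+t)F_3-qF_1$ (your $n=3$, since $F_1$ is the empty chain), corresponds to $Q^{-1}$ and must be handled separately --- either by verifying $(1+t+qt)\tilde F_3-qt^2\tilde F_1=\tilde F_5$ by hand, or by checking $E_3=-F_5(q,-N_1)$ as a third initial condition and starting both recurrences at $n=4$ (this is in effect what the paper does, as it verifies $E_1$, $E_2$, and $E_3$), or by falling back on your alternative bead-deletion argument, which covers all cases uniformly and is essentially the paper's own proof of Proposition \ref{FibRecurEk}.
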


\noindent The expansions for the first several $E_k$'s, i.e. $F_{2k-1}(q,t)$'s,
are given below.
\begin{eqnarray*}
E_1 &=& N_1 \\
E_2 &=& -(1+q)N_1 + N_1^2 \\
E_3 &=& (1+q+q^2)N_1 - (2+2q)N_1^2 + N_1^3 \\
E_4 &=& -(1+q+q^2+q^3)N_1 + (3+4q+3q^2)N_1^2 - (3+3q)N_1^3 + N_1^4 \\
E_5 &=& (1+q+q^2+q^3+q^4)N_1 - (4+6q+6q^2+4q^3)N_1^2 + (6+9q+6q^2)N_1^3 -
(4+4q)N_1^4 + N_1^5
\end{eqnarray*}

Before proving Theorem \ref{EkasFib} we develop two key propositions.
\begin{Prop} \label{FibRecurEk}
$F_{2n+1}(q,t) = (1+q+t)F_{2n-1}(q,t) - qF_{2n-3}(q,t)$ for $n\geq 2$.
\end{Prop}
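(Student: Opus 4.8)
The plan is to prove the recurrence by peeling off the last bead of the chain and composing three such one-step relations, in the same spirit as the arguments for Proposition~\ref{Lrecur}. The only real nuisance is that the $t$-exponent $\lceil k/2\rceil - \#S$ in the definition of $F_k$ couples the length of the chain with the size of the chosen subset, so I would first separate that dependence. For $m \geq 0$ set
\[
G_m(q,t) = \sum_{\substack{S \subseteq \{1,\dots,m\} \\ \text{no two consecutive}}} q^{\#\text{ even in }S}\, t^{-\#S},
\]
so that $F_k(q,t) = t^{\lceil k/2\rceil} G_{k-1}(q,t)$; in particular $F_{2n+1} = t^{n+1}G_{2n}$, $F_{2n-1} = t^n G_{2n-2}$, and $F_{2n-3} = t^{n-1} G_{2n-4}$.

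First I would establish the one-step recurrence $G_m = G_{m-1} + q^{[m\text{ even}]}\, t^{-1} G_{m-2}$ by conditioning on whether the last vertex $m$ lies in $S$: if $m \notin S$ the remaining subset is an arbitrary independent subset of $\{1,\dots,m-1\}$, while if $m \in S$ then $m-1 \notin S$ and the rest is an independent subset of $\{1,\dots,m-2\}$, with the bead $m$ contributing $t^{-1}$ and a factor $q$ exactly when $m$ is even. Next I would compose three consecutive instances of this relation, for the even index $2n$, the odd index $2n-1$, and the even index $2n-2$; substituting the odd-index identity into the even one and then eliminating $G_{2n-3}$ via the third identity yields
\[
G_{2n} = \bigl(1 + q t^{-1} + t^{-1}\bigr) G_{2n-2} - q\, t^{-2}\, G_{2n-4}.
\]
The alternation of the $q$-weight (present for the even beads $2n$ and $2n-2$, absent for the odd bead $2n-1$) is precisely what produces the coefficient $1 + (q+1)t^{-1}$.

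Finally I would reinsert the normalization: multiplying the displayed identity by $t^{n+1}$ and using $t^{n+1} G_{2n-2} = t\,F_{2n-1}$ and $t^{n+1}G_{2n-4} = t^2 F_{2n-3}$ gives exactly $F_{2n+1} = (1+q+t)F_{2n-1} - q F_{2n-3}$. The restriction $n \geq 2$ guarantees that the smallest ground set appearing, $G_{2n-4} = G_0$, has nonnegative index, so the base case is automatically covered and no separate initial verification is needed.

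I expect the main obstacle to be bookkeeping rather than anything conceptual: getting the powers of $t$ to line up after the normalization, and correctly tracking which beads carry the factor $q$ under the alternating parity. An alternative, essentially equivalent, route would be to use the relation $F_k(q,t) = t^{\lceil k/2\rceil}\tilde F_k(q,t^{-1})$ together with the three-term recurrence $\tilde F_{2n+1} = (1+t+qt)\tilde F_{2n-1} - q t^2 \tilde F_{2n-3}$ for the twisted polynomials, which follows from the same last-bead composition applied to $\tilde F$ and mirrors the recurrence~(\ref{l2tnrecur}) for the $\tilde L_{2k}$; substituting $t \mapsto t^{-1}$ and clearing denominators recovers the stated identity. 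I would likely present the direct computation above as the primary argument.
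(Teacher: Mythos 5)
Your proof is correct, but it takes a different route from the paper's. The paper argues directly at the level of $F$: it reads $F_{2n+1}$ as the weighted count of chains of $2n$ beads with no two consecutive black beads, interprets $(1+q+t)F_{2n-1}$ as concatenations of such a chain of length $2n-2$ with a two-bead chain (whose weight is exactly $1+q+t$), and identifies the defective concatenations --- those in which beads $2n-2$ and $2n-1$ are both black --- as being enumerated by $qF_{2n-3}$; the recurrence is thus a single inclusion-exclusion step in which every term has a direct combinatorial meaning. You instead decouple the $t$-exponent from the chain length via the normalization $F_k = t^{\lceil k/2\rceil}G_{k-1}$, prove the elementary one-bead recurrence $G_m = G_{m-1} + q^{[m\ \mathrm{even}]}t^{-1}G_{m-2}$, and obtain the three-term identity by composing three instances of it and eliminating the odd-index terms; here the subtracted term $qF_{2n-3}$ emerges from algebraic elimination rather than from a combinatorial identification of bad configurations. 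Both are sound, and your index bookkeeping (including the role of $n\geq 2$ in keeping $G_{2n-4}=G_0$ legal) checks out. The paper's proof buys brevity and a combinatorial interpretation of each term; yours buys mechanical verifiability, since all parity and weight subtleties are isolated in one trivial lemma. Note finally that your normalization is precisely the \emph{detour} the paper mentions and deliberately omits: your $G_m(q,t)$ equals $\tilde{F}_{m+1}(q,t^{-1})$, so the alternative route you sketch at the end via the twisted polynomials is not actually distinct from your primary argument.
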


\begin{proof}
This follows the similar logic as the proof of Proposition
\ref{Lrecur} except we can use a more direct method.  (One can use
the $t$-weighting of the twisted $(q,t)$-Fibonacci polynomials
instead to see this recursion more clearly, but we omit this
detour.)  The polynomial $F_{2n+1}$ is a $(q,t)$-enumeration of the
number of chains of $2n$ beads, with each bead either black or
white, and no two consecutive beads both black.  Similarly
$(1+q+t)F_{2n-1}$ enumerates the concatenation of such a chain of
length $2n-2$ with a chain of length $2$.  One can recover a legal
chain of length $2n$ this way except in the case where the
$(2n-2)$nd and $(2n-1)$st beads are both black.  Such cases are
enumerated by $qF_{2n-3}$ and this completes the proof.
\end{proof}

\begin{Prop} \label{plethEg1}
$(-1)^{n+1}E_{n+1} = (1+q-N_1)(-1)^{n}E_n - q(-1)^{n-1}E_{n-1}$ for
$n \geq 2$.
\end{Prop}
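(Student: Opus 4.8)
The plan is to prove this recurrence directly from the rationality of the zeta function, bypassing the combinatorial definition of $F_k(q,t)$ entirely; indeed, the identity is really just a statement about the denominator of $Z(E,T)$. Note that we cannot invoke Theorem \ref{EkasFib} here, since this proposition is one of the ingredients used to establish that theorem, so any argument must stay on the generating-function side. First I would recall from (\ref{ZasE}) that, in the genus one case, $Z(E,T) = 1/\sum_{k\geq 0}(-1)^k E_k T^k$, so that $\sum_{k\geq 0}(-1)^k E_k T^k = 1/Z(E,T)$. Combining this with the explicit elliptic zeta function (\ref{Zsimp}) gives the clean identity
$$\sum_{k\geq 0}(-1)^k E_k T^k = \frac{(1-T)(1-qT)}{1-(1+q-N_1)T+qT^2}.$$

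The next step is to clear the denominator. Writing $c_k = (-1)^k E_k$ and multiplying both sides by $1-(1+q-N_1)T+qT^2$ yields
$$\left(\sum_{k\geq 0} c_k T^k\right)\bigl(1-(1+q-N_1)T+qT^2\bigr) = (1-T)(1-qT) = 1-(1+q)T+qT^2.$$
The key observation is that the right-hand side is a polynomial of degree two, so the coefficient of $T^m$ on the left must vanish for every $m\geq 3$. That coefficient is exactly $c_m - (1+q-N_1)c_{m-1} + q\,c_{m-2}$, whence $c_m - (1+q-N_1)c_{m-1}+q\,c_{m-2}=0$ for all $m\geq 3$. Setting $m=n+1$ (so that $m\geq 3$ becomes $n\geq 2$) and substituting back $c_k=(-1)^k E_k$ produces
$$(-1)^{n+1}E_{n+1} - (1+q-N_1)(-1)^n E_n + q\,(-1)^{n-1}E_{n-1}=0,$$
which is precisely the claimed recurrence after transposing the last two terms.

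Since every step is a finite coefficient extraction from an explicit rational function, there is essentially no hard part to this argument. The two points worth checking carefully are, first, that the degree-two bound on the right genuinely forces vanishing only from $m\geq 3$ onward -- this is exactly why the hypothesis reads $n\geq 2$ rather than $n\geq 1$, since at $m=2$ the coefficient equals the nonzero value $q$ -- and second, that the alternating signs hidden in $c_k=(-1)^k E_k$ are tracked correctly through the reindexing. A purely combinatorial alternative would instead deduce this from Proposition \ref{FibRecurEk} together with $E_n=(-1)^n F_{2n-1}(q,-N_1)$, but that identity is the content of Theorem \ref{EkasFib}, so the generating-function route above is both cleaner and free of any circularity.
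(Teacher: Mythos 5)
Your proof is correct and is essentially the paper's own argument: the paper likewise derives the recurrence from the generating function $\sum_{n\geq 0}(-1)^n E_n T^n = \frac{(1-T)(1-qT)}{1-(1+q-N_1)T+qT^2}$, observing that the denominator forces the linear recurrence on coefficients of $T^{n+1}$ once $n+1$ exceeds the degree of the numerator. You have merely spelled out the coefficient extraction (and the $n\geq 2$ threshold) that the paper leaves implicit.
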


\begin{proof}

One can prove this via plethysm, but it also follows directly from
the generating function for the $E_n$'s which is given by
$$\sum_{n\geq 0} (-1)^nE_nT^n = {(1-T)(1-qT)\over 1-(1+q-N_1)T+qT^2}.$$
The denominator of this series, also known as the series'
characteristic polynomial, yields the desired linear recurrence for
the coefficients of $T^{n+1}$, whenever $n+1$ exceeds the degree of
the
numerator. 
\end{proof}

With these two propositions verified, we can also now prove Theorem
\ref{EkasFib}.

\begin{proof} [Proof of Theorem \ref{EkasFib}]
It is clear that $E_1 = -F_{1}(q,-N_1)$, $E_2= F_3(q,-N_1)$, and
$E_3 = -F_5(q,-N_1)$.  Propositions \ref{FibRecurEk} and
\ref{plethEg1} show that both satisfy the same recurrence relations.
Thus we have verified that
$$E_n = (-1)^n
F_{2n-1}(q,-N_1).$$
\end{proof}

\begin{Rem}
We can utilize plethysm and obtain results of a similar flavor to
Proposition \ref{plethEg1}, for example see Lemma \ref{hka1a2}
below.  With this result in mind, we obtain the following table of
symmetric function $e_k$ and $h_k$ in terms of various alphabets. \\

\begin{tabular}{cccc}
poly. $\setminus$ alphabet& $1+q-\alpha_1-\alpha_2$ & $1+q$ & $\alpha_1+\alpha_2$ \\ \\

$e_k$ & $E_k$ & $e_1=1+q,~e_2= q$ & $e_1=1+q-N_1,~e_2= q$    \\ \\
$h_k$ & $H_k$ & $1+q+\dots +q^k$ &  $(-1)^kE_{k+1}/N_1$
\end{tabular}

\vspace{1em}\noindent Notice that the formulas for $e_k[1+q]$ and
$h_k[1+q]$ are precisely the $N_1=0$ cases of
$e_k[\alpha_1+\alpha_2]$ and $h_k[\alpha_1+\alpha_2]$. This should
come at no surprise since $1$ and $q$ are the two roots of
$T^2-(1+q)T+q$.
\end{Rem}

\begin{Lem} \label{hka1a2}
Letting $E_k$ be defined as $e_k[1+q-\alpha_1-\alpha_2]$ where
$\alpha_1$ and $\alpha_2$ are roots of $T^2-(1+q-N_1)T+q$, we obtain
$$h_k[\alpha_1+\alpha_2] = (-1)^kE_{k+1}/N_1.$$
\end{Lem}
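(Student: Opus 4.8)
The plan is to prove the identity $h_k[\alpha_1+\alpha_2] = (-1)^k E_{k+1}/N_1$ by exploiting the single-variable generating function machinery already assembled in the paper, rather than by a direct combinatorial attack. The key observation is that $h_k[\alpha_1+\alpha_2]$ is the coefficient of $T^k$ in the generating series $\sum_{n\geq 0} h_n[\alpha_1+\alpha_2] T^n$, and by the well-known identity recalled just before equation~(\ref{ZasE}), this series equals $\prod (1-t_k T)^{-1}$ where the $t_k$ range over the alphabet $\{\alpha_1,\alpha_2\}$. Concretely this gives
\begin{eqnarray*}
\sum_{n\geq 0} h_n[\alpha_1+\alpha_2]\,T^n = \frac{1}{(1-\alpha_1 T)(1-\alpha_2 T)} = \frac{1}{1-(\alpha_1+\alpha_2)T + \alpha_1\alpha_2 T^2} = \frac{1}{1-(1+q-N_1)T+qT^2},
\end{eqnarray*}
using $\alpha_1+\alpha_2 = 1+q-N_1$ and $\alpha_1\alpha_2 = q$ from Section~\ref{LucSec}.

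Next I would relate this to the $E_k$ generating function. From Proposition~\ref{plethEg1}'s proof we already have
\begin{eqnarray*}
\sum_{n\geq 0}(-1)^n E_n T^n = \frac{(1-T)(1-qT)}{1-(1+q-N_1)T+qT^2}.
\end{eqnarray*}
Comparing the two displays, the right-hand denominators agree, so $\sum_{n\geq 0}(-1)^n E_n T^n = (1-T)(1-qT)\sum_{n\geq 0} h_n[\alpha_1+\alpha_2]T^n$. Extracting the coefficient of $T^{k+1}$ on both sides and solving for $h_k[\alpha_1+\alpha_2]$ would then finish the argument. I would equivalently note that the factor $(1-T)(1-qT) = \sum_{j\geq 0}(-1)^j e_j[1+q]T^j$ is precisely the reciprocal of the $h[1+q]$ series, so that multiplying $Z(C,T)$'s numerator against the $h[1+q]$ series corresponds plethystically to the alphabet subtraction $(1+q-\alpha_1-\alpha_2) - (1+q) = -\alpha_1-\alpha_2$, which is exactly why the $\alpha_1+\alpha_2$ alphabet surfaces.

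The main step requiring care is the extraction of coefficients and the bookkeeping of the index shift by one (the $E_{k+1}$ versus $h_k$) together with the division by $N_1$. Writing $(1-T)(1-qT) = 1 - (1+q)T + qT^2$ and equating the coefficient of $T^{k+1}$ yields $(-1)^{k+1}E_{k+1} = h_{k+1}[\alpha_1+\alpha_2] - (1+q)h_k[\alpha_1+\alpha_2] + q\,h_{k-1}[\alpha_1+\alpha_2]$; I expect the cleanest route is instead to isolate $h_k$ directly from the relation $\sum(-1)^nE_nT^n = (1-(1+q)T+qT^2)\sum h_n T^n$ after dividing, but either way one must verify the claimed closed form holds uniformly in $k\geq 1$ and track where the $N_1$ in the denominator originates (it comes from $E_1 = N_1$ and the low-degree terms). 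The potential obstacle, and the only genuinely nonroutine point, is justifying that the coefficient comparison is legitimate for all $k$ including small $k$ where the numerator $(1-T)(1-qT)$ still contributes; I would handle this by checking the base cases $k=0,1$ by hand against the $E_k$ table already displayed and then observing that for $k\geq 2$ the formula is forced by the agreement of the two rational generating functions.
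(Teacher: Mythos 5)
Your proposal is correct, but it takes a genuinely different route from the paper's. The paper works term-by-term with plethysm: it expands $E_{n+1}=e_{n+1}[(1+q)-(\alpha_1+\alpha_2)]$ via the subtraction formula $e_k[A-B]=\sum_i e_i[A](-1)^{k-i}h_{k-i}[B]$ to obtain the three-term relation $(-1)^{n+1}E_{n+1}=h_{n+1}[\alpha_1+\alpha_2]-(1+q)h_n[\alpha_1+\alpha_2]+qh_{n-1}[\alpha_1+\alpha_2]$, then uses Proposition \ref{plethEg1} to show that the sequence $(-1)^kE_{k+1}/N_1$ satisfies that same three-term relation, and closes by induction after checking initial conditions. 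You instead work globally with rational generating functions sharing the denominator $D(T)=1-(1+q-N_1)T+qT^2$. One caution: the relation you extract from the coefficient of $T^{k+1}$ is literally the same three-term identity the paper derives, and by itself it does not let you ``solve for'' $h_k[\alpha_1+\alpha_2]$ --- a single three-term relation linking two unknown sequences determines neither. To finish, you need one more ingredient, and either of two routes works: (i) the common denominator also forces $h_{k+1}=(1+q-N_1)h_k-qh_{k-1}$, and substituting this into your relation collapses it to $(-1)^{k+1}E_{k+1}=-N_1\,h_k[\alpha_1+\alpha_2]$; or (ii), cleanest of all and closest to your ``dividing'' remark, subtract $1$ from the $E$-series and observe that the numerators differ by exactly $-N_1T$:
\begin{equation*}
\sum_{n\geq 1}(-1)^nE_nT^n \;=\; \frac{(1-T)(1-qT)-D(T)}{D(T)} \;=\; \frac{-N_1T}{D(T)} \;=\; -N_1T\sum_{k\geq 0}h_k[\alpha_1+\alpha_2]\,T^k,
\end{equation*}
which yields the lemma for all $k\geq 0$ at once and makes your two flagged concerns (the index shift, the origin of the $N_1$ in the denominator) completely transparent, with no induction and no separate base cases. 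As for what each approach buys: the paper's proof showcases the plethystic calculus that is the theme of Section \ref{bifibsec} and reuses Proposition \ref{plethEg1} directly, while your route, completed as in (ii), is shorter, avoids induction entirely, and explains conceptually why dividing by $N_1$ is legitimate (the numerator discrepancy is exactly $-N_1T$).
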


\begin{proof} 
We have for $n \geq 2$ that
$$ N_1 E_n = E_{n+1} + (1+q)E_n + q E_{n-1}$$
since $(-1)^{n+1}E_{n+1} = (1+q-N_1)(-1)^{n}E_n -
q(-1)^{n-1}E_{n-1}$ by Proposition \ref{plethEg1}. However by
$$e_k[A-B] = \sum_{i=0}^k
e_{i}[A](-1)^{k-i}h_{k-i}[B],$$ we get $$E_{n+1} =
(-1)^{n+1}\bigg(h_{n+1}[\alpha_1+\alpha_2]-(1+q)h_n[\alpha_1+\alpha_2]
+ q h_{n-1}[\alpha_1+\alpha_2]\bigg)$$ using $A = 1+q$ and $B =
\alpha_1+\alpha_2$.  After verifying initial conditions and
comparing with $$ (-1)^{n+1}E_{n+1} = (-1)^{n+1}E_{n+2}/N_1
-(-1)^{n}(1+q)E_{n+1}/N_1 + (-1)^{n-1}q E_{n}/N_1$$ we get
$$h_{n+1}[\alpha_1+\alpha_2] = (-1)^{n+1}E_{n+2}/N_1$$ by induction.
\end{proof}

We apply the above $H_k$--$E_k$ (i.e. $h_k$--$e_k$) duality to
obtain an exponential generating function for the weighted number of
spanning trees of the wheel graph,
$$W(q,N_1,T) = \exp\bigg(\sum_{k\geq 1} \mathcal{W}_k(q,N_1)  {T^k
\over k} \bigg).$$ Using $\mathcal{W}_k=-N_k|_{N_1\rightarrow
-N_1}$, and the fact this is an exponential, we use (\ref{Zsimp}) to
obtain
$$W(q,N_1,T) = {1 \over 1-{N_1T\over (1-qT)(1-T)}} = {(1-qT)(1-T)\over 1- (1+q+N_1)T+qT^2}.$$

Also, rewriting $W(q,t,T)$
 as an ordinary generating
function, we get
$$W(q,t,T) = \sum_{k\geq0} E_k\bigg|_{N_1\rightarrow -N_1} (-T)^k = 1+
\sum_{k\geq 1} F_{2k-1}(q,t)T^k.$$

We summarize our results as the following dictionary between elliptic curves and spanning trees accordingly. \\

\begin{tabular}{ccc}
 & Elliptic Curves & Spanning Trees \\ \\

Generating Function & ${1-(1+q-N_1)T+qT^2 \over (1-qT)(1-T)}$ &
${(1-qT)(1-T)
\over 1-(1+q+N_1)T+qT^2}$ \\ \\
Factors of $1-(1+q\mp N_1)T+qT^2$ & $(1-\alpha_1T)(1-\alpha_2T)$ &
$(1-\beta_1T)(1-\beta_2T)$  \\ \\ $N_k ~ (resp.~  \mathcal{W}_k$ ) &
$p_k[1+q-\alpha_1-\alpha_2]$ & $p_k[-1-q+\beta_1+\beta_2]$ \\ \\
$H_k=N_1(1+q+\dots + q^{k-1})$ &  $h_k[1+q-\alpha_1-\alpha_2]$ & $(-1)^{k-1}e_k[-1-q+\beta_1+\beta_2]$   \\ \\
$(-1)^kE_k = F_{2k-1}(q,\mp N_1)$ & $(-1)^ke_k[1+q-\alpha_1-\alpha_2]$ &  $h_k[-1-q+\beta_1+\beta_2]$   \\ \\
\end{tabular}

\subsection{Duality between Lucas and Fibonacci numbers}
In addition to the above discussion of how $H_k$ and $E_k$ are dual,
this dictionary also highlights a comparison between \emph{elliptic
curve--spanning tree} duality and duality between Lucas numbers and
Fibonacci numbers.  As an application, we obtain a formula for
$E_k$, i.e. $F_{2k-1}(q,t)$, in terms of the polynomial expansion
for the $L_{2k}(q,t)$'s. If we recall our definition of $P_{i,k}$'s
such that $N_k = \sum_{i=1}^k (-1)^{i+1}P_{i,k}(q)N_1^i$, or
equivalently $L_{2k}(q,t) = 1+q^k + \sum_{i=1}^k P_{i,k}(q)t^i$,
then we have the following identity.

\begin{Prop} \label{EkAsFrac}
$$E_k = \sum_{i=1}^k {(-1)^{k+i} ~\cdot~ i\over k}~  P_{i,k}(q) N_1^i.$$
\end{Prop}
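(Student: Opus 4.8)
The plan is to reduce the identity to the single compact relation
$$E_k = \frac{(-1)^{k+1}}{k}\,N_1\,\partial_{N_1}N_k,$$
and then simply read off the claimed coefficients. Indeed, differentiating the expansion $N_k = \sum_{i=1}^k (-1)^{i-1}P_{i,k}(q)N_1^i$ of Theorem \ref{Gar} with respect to $N_1$ and multiplying by $N_1$ gives $N_1\,\partial_{N_1}N_k = \sum_{i=1}^k i\,(-1)^{i-1}P_{i,k}(q)N_1^i$. Inserting this into the relation above and using $(-1)^{k+1}(-1)^{i-1}=(-1)^{k+i}$ produces exactly $E_k = \sum_{i=1}^k \frac{(-1)^{k+i}\,i}{k}P_{i,k}(q)N_1^i$, which is the assertion. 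So the whole proof comes down to establishing the boxed relation.

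To prove that relation I would exploit that the generating series of the $N_k$ and of the $E_k$ are, respectively, the logarithm and (a multiple of) the reciprocal of $Z(E,T)$. Write $D(T)=1-(1+q-N_1)T+qT^2$, so that $Z(E,T)=D(T)/\big((1-T)(1-qT)\big)$ by (\ref{Zsimp}). Taking $\partial_{N_1}$ of $\log Z(E,T)=\sum_{k\geq 1}N_k T^k/k$, the $N_1$-independent factors $(1-T)(1-qT)$ drop out, and since $\partial_{N_1}D(T)=T$ one obtains the identity of formal power series
$$\sum_{k\geq 1}\big(\partial_{N_1}N_k\big)\,\frac{T^k}{k} = \frac{\partial_{N_1}D(T)}{D(T)} = \frac{T}{D(T)}.$$
On the other side, the $E_k$ generating function from the proof of Proposition \ref{plethEg1}, namely $\sum_{k\geq 0}(-1)^kE_kT^k = (1-T)(1-qT)/D(T)$, becomes after the substitution $T\mapsto -t$ and the algebraic simplification $(1+t)(1+qt)=D(-t)+N_1 t$ the clean form $\sum_{k\geq 1}E_k t^k = N_1 t/D(-t)$.

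The final step is to match these two series. Substituting $T=-t$ into the displayed identity gives $t/D(-t) = -\sum_{k\geq 1}(-1)^k\big(\partial_{N_1}N_k\big)\,t^k/k$, so that $N_1 t/D(-t) = -N_1\sum_{k\geq 1}(-1)^k\big(\partial_{N_1}N_k\big)t^k/k$; comparing this with $\sum_{k\geq 1} E_k t^k = N_1 t/D(-t)$ and reading off the coefficient of $t^k$ yields the boxed relation $E_k=\frac{(-1)^{k+1}}{k}N_1\,\partial_{N_1}N_k$. I expect the only real obstacle to be sign discipline: one must keep straight the orientation of $Z$ versus $1/Z$ (the numerator of $Z$ is $D(T)$, not the denominator), carefully track the $(-1)^k$ introduced by $T\mapsto -t$, and confirm a low-order case such as $E_1=N_1$ (where $\partial_{N_1}N_1=1$) to pin down the overall sign. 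As an independent cross-check, the boxed relation can also be verified directly from the closed forms $N_k=1+q^k-\alpha_1^k-\alpha_2^k$ and $E_k=(-1)^{k+1}N_1(\alpha_1^k-\alpha_2^k)/(\alpha_1-\alpha_2)$, using $\partial_{N_1}\alpha_1 = -\alpha_1/(\alpha_1-\alpha_2)$ and $\partial_{N_1}\alpha_2 = \alpha_2/(\alpha_1-\alpha_2)$, which follow from $\alpha_1+\alpha_2=1+q-N_1$ and $\alpha_1\alpha_2=q$.
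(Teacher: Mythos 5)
Your proof is correct and takes essentially the same route as the paper: both reduce the statement to the key relation $E_k = \frac{(-1)^{k+1}}{k}N_1\,\partial_{N_1}N_k$ (equivalently, $\frac{1}{Z(E,T)} - 1 = -N_1\,\partial_{N_1}\log Z(E,T)$) and then read off coefficients using the expansion of $N_k$ from Theorem \ref{Gar}. The only difference is cosmetic: the paper derives this relation by expanding $\frac{1}{Z}-1$ as a geometric series in $\frac{N_1T}{(1-T)(1-qT)}$ and recognizing the logarithmic derivative, whereas you compute the two rational functions $\frac{T}{D(T)}$ and $\frac{N_1t}{D(-t)}$ explicitly and match them after the substitution $T\mapsto -t$.
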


\begin{proof} We use the identities as above,
and the fact that ${1\over Z(E,T)} = \sum_{n\geq 0} (-1)^nE_n T^n$.
 Thus we have
\begin{eqnarray*}\sum_{n\geq 1} (-1)^nE_n T^n  &=& {1\over Z(E,T)}
~- 1 = {1\over 1+{N_1T\over (1-qT)(1-T)}} ~-1 = \sum_{n\geq 1}
(-1)^{n} \bigg({N_1T\over (1-qT)(1-T)}\bigg)^n
\\
&=& -N_1 {\partial \over \partial N_1}\sum_{n\geq 1}
{(-1)^{n-1}\over n} \bigg({N_1T\over (1-qT)(1-T)}\bigg)^n \\ &=&
-N_1 {\partial \over \partial N_1} \bigg(\log\bigg(1+{N_1T\over
(1-qT)(1-T)}\bigg)\bigg) = -N_1 {\partial \over \partial
N_1}\log\bigg(Z(E,T)\bigg),\end{eqnarray*}
which equals  $-N_1 {\partial \over \partial N_1}\bigg(\sum_{k\geq
1} {N_k\over k} T^k\bigg).$ Rewriting the $N_k$'s using the
polynomial formulas of Theorem \ref{Gar}, we have
\begin{eqnarray*} \sum_{n\geq 1} (-1)^nE_n T^n &=& -N_1 {\partial \over \partial
N_1}\bigg(\sum_{k\geq 1}{1\over k}\sum_{i=1}^k (-1)^{i-1}
P_{i,k}(q)N_1^i T^k\bigg)\\ &=& \sum_{k\geq 1}\sum_{i=1}^k {i\over
k}(-1)^{i} P_{i,k}(q)N_1^i T^k.
\end{eqnarray*}
Comparing the coefficients of $T^k$ on both sides completes the
proof.
\end{proof}

Proposition \ref{EkAsFrac} can also be given a combinatorial proof
by the following Lemma which contrasts the circular nature of our
combinatorial interpretation for the Lucas numbers with the linear
nature of the Fibonacci numbers.

\begin{Lem} \label{CijRels}
For $1\leq i \leq k$ and  $0\leq j \leq i$, we have the number,
which we denote as $c_{i,j}$, of subsets $S_1$ of $\{1,2,\dots,
2k\}$ with $k-i-j$ odd elements, $j$ even elements, and no two
elements circularly consecutive equals
$${k \over i} \cdot \# \bigg(\mathrm{subsets~}S_2\mathrm{~of~}\{1,2,\dots, 2k-2\}\mathrm{~with~}k-i-j\mathrm{~odd~elments,~}j\mathrm{~even~elements,}$$
\noindent $\mathrm{~and~no~two~elements~consecutive}\bigg).$
\end{Lem}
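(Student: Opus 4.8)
The plan is to clear the denominator and prove the equivalent integer identity $i\,c_{i,j}=k\,d_{i,j}$ by an explicit statistic-preserving bijection, after first re-encoding both kinds of subsets as words over a three-letter alphabet. Note that a subset counted by $c_{i,j}$ has $(k-i-j)+j=k-i$ elements, and the same is true for $d_{i,j}$, so the two sides count configurations with the same number $a:=k-i-j$ of odd and $b:=j$ of even elements; only the ambient shape (a circle of $2k$ positions versus a line of $2k-2$ positions) differs.

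First I would group the positions into consecutive pairs (``super-positions'') $\{2s-1,2s\}$. Since the two entries of a super-position are adjacent, a circularly non-consecutive subset $S_1$ meets each pair in at most one element, so it is recorded by a cyclic word $w\in\{\emptyset,O,E\}^k$, where the $s$-th letter is $O$, $E$, or $\emptyset$ according as $S_1$ meets $\{2s-1,2s\}$ in the odd element, the even element, or not at all. The only remaining adjacency is between the even entry $2s$ of one super-position and the odd entry $2s+1$ of the next (cyclically, with $2k$ adjacent to $1$); hence $S_1$ is circularly non-consecutive if and only if $w$ has no cyclically consecutive factor $EO$. Under this encoding $j=\#E$, $a=\#O$, and --- the key point --- $i=k-a-j=\#\emptyset$, so $i$ is exactly the number of empty super-positions. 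The identical grouping of $\{1,\dots,2k-2\}$ into the $k-1$ pairs $\{2s-1,2s\}$ turns $S_2$ into a linear word $v\in\{\emptyset,O,E\}^{k-1}$ avoiding the linear factor $EO$, now with $\#E=j$, $\#O=a$, and $\#\emptyset=(k-1)-(a+j)=i-1$.

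With this dictionary the bijection is the standard ``cut at a marked blank'' map. On the left I mark one of the $i$ empty letters of a cyclic word $w$, giving $i\,c_{i,j}$ marked words. Given $(w,s^{\ast})$ I delete the empty super-position $s^{\ast}$, read the remaining $k-1$ letters in cyclic order starting at $s^{\ast}+1$ to obtain a linear word $v$, and record the discarded label $\ell:=s^{\ast}\in\{1,\dots,k\}$. Because the only adjacencies destroyed abut the letter $\emptyset$ at $s^{\ast}$, the word $v$ still avoids $EO$ and has letter counts $(\#\emptyset,\#O,\#E)=(i-1,a,j)$, so it is counted by $d_{i,j}$; the pair $(v,\ell)$ ranges over a set of size $k\,d_{i,j}$. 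The inverse reinserts an $\emptyset$ at position $\ell$ and wraps $v$ around it, and one checks at once that the two newly created adjacencies involve this $\emptyset$ and so cannot form $EO$. Since letter counts, hence the odd/even statistics $a$ and $j$, are preserved throughout, this yields $i\,c_{i,j}=k\,d_{i,j}$, the asserted identity.

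I expect the only real work to be the bookkeeping in the encoding step: checking that the single forbidden factor $EO$ captures \emph{exactly} the circular (resp.\ linear) non-consecutiveness condition, including the wrap-around pair $\{2k,1\}$ in the circular case and the absence of any constraint past the last super-position in the linear case, and confirming that $i$ and $i-1$ are the correct blank-counts on the two sides (which also explains why the statement needs $i\ge 1$, there being no blank to cut when $i=0$). Once the encoding is pinned down, the bijection and its weight-preservation are routine.
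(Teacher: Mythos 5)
Your proof is correct, and it reaches the identity $i\,c_{i,j}=k\,d_{i,j}$ by a route that is recognizably dual to, but organized differently from, the paper's. The paper first notes that the sets counted by $d_{i,j}$ are exactly (via the identity map) the sets $S_1$ avoiding both $2k-1$ and $2k$, and then proves the complementary statement: the sets $S_1$ that \emph{do} contain $2k-1$ or $2k$ form the fraction $(k-i)/k$ of all valid sets. It does this by averaging over the $k$ even circular shifts: each valid set of cardinality $k-i$ has exactly $k-i$ shifts landing some element on $\{2k-1,2k\}$, one per element, where the non-consecutiveness hypothesis is needed for the subtle point that no single shift can land \emph{two} elements on $\{2k-1,2k\}$ simultaneously. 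You instead count the ``good'' configurations directly: after encoding each set as a word in $\{\emptyset,O,E\}$ indexed by the $k$ super-positions, you mark one of the $i$ blanks and cut there, obtaining an explicit bijection between blank-marked circular words and pairs (linear word, cut label). The two arguments are interchangeable --- marking a blank super-position is the same datum as choosing the even shift that carries that blank to positions $\{2k-1,2k\}$ --- but each buys something: your version needs no collision argument (it is absorbed into marking blanks rather than elements) and has a manifestly invertible map, while the paper's version avoids the encoding machinery and makes the correspondence between $S_2$'s and the surviving $S_1$'s completely transparent. Your observation that $i$ equals the number of empty super-positions, which both explains the factor $k/i$ and the hypothesis $i\geq 1$, is exactly the right structural point and is implicit, though never isolated, in the paper's argument.
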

This notation might seem non-intuitive, but we use these indices so
that the total number of elements is $k-i$ and the number of even
elements is $j$. Thus the number of subsets $S_1$ (resp. $S_2$)
directly describes the coefficient of $q^jt^i$ in $L_{2k}(q,t)$
(resp. $F_{2k-1}(q,t)$).

\begin{proof}
To prove this result we note that there is a bijection between the
number of subsets of the first kind that do not contain $2k-1$ or
$2k$ and those of the second kind.  Thus it suffices to show that
the number of sets $S_1$ which \emph{do} contain element $2k-1$ or
$2k$ is precisely fraction ${k-i \over k}$ of all sets $S_1$
satisfying the above hypotheses.

Circularly shifting every element of set $S_1$ by an even amount
$r$, i.e. $\ell \mapsto \ell+r-1 ~(\mathrm{mod~} 2k) +1$, does not
affect the number of odd elements and even elements.  Furthermore,
out of the $k$ possible even shifts, $(k-i)$ of the sets, i.e. the
cardinality of set $S_1$, will contain $2k-1$ or $2k$.  This follows
since for a given element $\ell$ there is exactly one shift which
makes it $2k-1$ (or $2k$) if $\ell$ is odd (or even), respectively.
Since elements cannot be consecutive, there is no shift that sends
two different elements to both $2k-1$ and $2k$ simultaneously and
thus we get the full $(k-i)$ possible shifts.
\end{proof}

Using this relationship, we can derive formulas involving binomial
coefficients for $P_{i,k}(q)$ using our combinatorial interpretation
for the $(q,t)$-Lucas polynomials and $(q,t)$-Fibonacci polynomials.

\begin{Prop} \label{EkAsBinom}
For $k \geq 1$ and $1 \leq i \leq k$, we have
$$P_{i,k}(q) = \sum_{j=0}^i {k \over i}{k-1-j
\choose i-1}{i+j-1 \choose j} ~q^j.$$
\end{Prop}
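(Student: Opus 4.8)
The plan is to combine Lemma \ref{CijRels} with a direct count of the linear subsets it produces. Recall that $L_{2k}(q,t) = 1 + q^k + \sum_{i=1}^k P_{i,k}(q)t^i$, so that the coefficient of $q^j$ in $P_{i,k}(q)$ is $c_{i,j}$, the coefficient of $q^jt^i$ in $L_{2k}(q,t)$; by the definition of $L_{2k}$ this is the number of subsets $S_1$ of $\{1,\dots,2k\}$ with no two circularly consecutive elements, exactly $j$ even elements, and $k-i-j$ odd elements. By Lemma \ref{CijRels} we have $c_{i,j} = \tfrac{k}{i}\,d_{i,j}$, where $d_{i,j}$ counts the subsets $S_2$ of $\{1,\dots,2k-2\}$ with no two (linearly) consecutive elements, $j$ even and $k-i-j$ odd. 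Hence it suffices to show that for each $j$ the coefficient of $q^j$ in $P_{i,k}(q)$ equals $\tfrac{k}{i}\binom{k-1-j}{i-1}\binom{i+j-1}{j}$ (which is nonzero only in the effective range $0\le j\le k-i$), and for this I only need the identity $d_{i,j} = \binom{k-1-j}{i-1}\binom{i+j-1}{j}$.

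To count $d_{i,j}$, I would encode a subset $S_2$ as a word of length $k-1$ over the alphabet $\{A,B,C\}$, reading the ground set $\{1,\dots,2k-2\}$ in the $k-1$ consecutive pairs $(2\ell-1,2\ell)$. Because no two chosen elements may be consecutive, within each pair at most one element is chosen, so each pair produces exactly one of three patterns: write $A$ if only the even element $2\ell$ is chosen, $B$ if only the odd element $2\ell-1$ is chosen, and $C$ if neither is chosen. A word with $b=j$ copies of $A$, $a=k-i-j$ copies of $B$, and $c=i-1$ copies of $C$ records precisely a subset with the prescribed parity counts (note $a+b+c=k-1$ is automatic since the number of chosen elements is $k-i$). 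The only remaining manifestation of the no-two-consecutive condition is across a pair boundary, between positions $2\ell$ and $2\ell+1$, and this is exactly the prohibition that a letter $A$ be immediately followed by a letter $B$. Thus $d_{i,j}$ equals the number of words in $\{A,B,C\}^{k-1}$ with these letter multiplicities that avoid the factor $AB$.

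Finally I would count such words bijectively. First arrange the $b$ copies of $A$ and the $c$ copies of $C$, in $\binom{b+c}{b}$ ways; then insert the $a$ identical copies of $B$ into the gaps of this arrangement. There are $b+c+1$ gaps, but the $b$ gaps lying immediately after an $A$ are forbidden (a $B$ there would create the factor $AB$), leaving $c+1$ admissible gaps; inserting $a$ identical letters into $c+1$ gaps with repetition allowed gives $\binom{a+c}{a}$ ways. This is a bijection onto the $AB$-avoiding words, so $d_{i,j} = \binom{b+c}{b}\binom{a+c}{a} = \binom{i+j-1}{j}\binom{k-1-j}{i-1}$; multiplying by $k/i$ and collecting over $j$ yields the proposition. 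The step requiring the most care is the encoding: one must verify that the only surviving consecutiveness constraint is exactly the cross-boundary $AB$ prohibition, and that the slot-insertion map is a genuine bijection, correctly identifying the one forbidden gap per $A$ while permitting runs of several $B$'s inside a single admissible gap.
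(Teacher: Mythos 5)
Your proof is correct, and its skeleton matches the paper's: both start from the identification of the coefficient of $q^jt^i$ in $L_{2k}(q,t)$ with $P_{i,k}(q)\big|_{q^j}$, and both invoke Lemma \ref{CijRels} to trade the circular count $c_{i,j}$ for $\tfrac{k}{i}$ times the linear count $d_{i,j}$ over $\{1,\dots,2k-2\}$. The difference is in how the linear count is evaluated: the paper simply cites \cite[Theorem 2.2]{Zel} or \cite[Theorem 3]{MusPropp} for the formula $\binom{n-r}{q}\binom{n-q}{r}$ (with $n=k-1$, $q=k-i-j$ odd elements, $r=j$ even elements), whereas you prove that count from scratch via the encoding of a subset as a word in $\{A,B,C\}^{k-1}$ avoiding the factor $AB$, followed by the gap-insertion bijection giving $\binom{i+j-1}{j}\binom{k-1-j}{i-1}$. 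Your encoding is sound: within each pair $(2\ell-1,2\ell)$ at most one element can be chosen, the only surviving adjacency constraint is between $2\ell$ and $2\ell+1$, which is exactly the prohibition of $A$ immediately followed by $B$, and the insertion argument correctly identifies the $c+1$ admissible gaps (runs of $B$'s may share a gap, and $B$'s before an $A$ are harmless). So your argument buys self-containedness — it supplies a proof of the external ingredient the paper only references — at the cost of a page of extra combinatorics; the paper's version is shorter but not self-contained.
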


\begin{proof}
See \cite[Theorem 2.2]{Zel} or \cite[Theorem 3]{MusPropp} which show
by algebraic and combinatorial arguments, respectively, that the
number of ways to choose a subset $S \subset \{1,2,\dots, 2n\}$ such
that $S$ contains $q$ odd elements, $r$ even elements, and no
consecutive elements is
$${n-r \choose q}{n-q \choose r}.$$  Letting $n=k-1$, $q=k-i-j$ and $r=j$, we
obtain
$$ {i\over k}P_{i,k}(q) = F_{2k-1}(q,N_1)\bigg|_{N_1^i} = \sum_{j=0}^i {k-1-j \choose i-1}{i+j-1 \choose j}q^j.$$
\end{proof}

\begin{Cor}
$$N_{k}(q,N_1) = \sum_{i=1}^k\sum_{j=0}^i {(-1)^{i+1}\cdot k \over i}{k-1-j
\choose i-1}{i+j-1 \choose j} N_1^i ~q^j.$$ and
$$E_k = \sum_{i=1}^k\sum_{j=0}^i (-1)^{k+i}~  {k-1-j \choose i-1}{i+j-1
\choose j} N_1^i ~q^j.$$
\end{Cor}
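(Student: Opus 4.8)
The plan is to recognize both displayed identities as immediate consequences of results already in hand: each follows by substituting the explicit binomial formula for $P_{i,k}(q)$ from Proposition \ref{EkAsBinom} into a polynomial expansion of $N_k$ (respectively $E_k$) established earlier, after which only sign bookkeeping remains.

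First, for the formula for $N_k$, I would start from Theorem \ref{Gar}, which gives $N_k = \sum_{i=1}^k (-1)^{i-1} P_{i,k}(q) N_1^i$, and insert the expression $P_{i,k}(q) = \sum_{j=0}^i \frac{k}{i}{k-1-j \choose i-1}{i+j-1 \choose j} q^j$ from Proposition \ref{EkAsBinom}. Merging the two summations over $i$ and $j$ and using $(-1)^{i-1} = (-1)^{i+1}$ reproduces the first claimed identity term by term.

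Second, for the formula for $E_k$, I would begin instead from Proposition \ref{EkAsFrac}, which expresses $E_k = \sum_{i=1}^k \frac{(-1)^{k+i}\cdot i}{k} P_{i,k}(q) N_1^i$. The crucial observation is that the prefactor $i/k$ here cancels precisely against the $k/i$ appearing inside Proposition \ref{EkAsBinom}; indeed the proof of that proposition already isolates the cleaner quantity $\frac{i}{k} P_{i,k}(q) = \sum_{j=0}^i {k-1-j \choose i-1}{i+j-1 \choose j} q^j$. Substituting this directly into the expression for $E_k$ yields the second identity, with the sign $(-1)^{k+i}$ carried through untouched.

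Because the derivation is pure substitution, there is no genuine obstacle; the only points requiring care are the sign conventions, namely confirming that $(-1)^{i-1}$ matches the stated $(-1)^{i+1}$ and that the $i/k$ factor is fully absorbed before the $(-1)^{k+i}$ sign is read off. As a guard against an off-by-one slip in the summation ranges, I would cross-check the resulting coefficients against the low-degree expansions of $N_k$ displayed before Theorem \ref{Gar} and the table of $E_k$ values listed just after Theorem \ref{EkasFib}.
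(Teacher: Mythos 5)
Your proposal is correct and is essentially the derivation the paper intends: the Corollary is stated immediately after Proposition \ref{EkAsBinom} precisely because it follows by substituting that binomial formula into Theorem \ref{Gar} (for $N_k$) and into Proposition \ref{EkAsFrac} (for $E_k$), with the $k/i$ and $i/k$ factors cancelling and $(-1)^{i-1}=(-1)^{i+1}$ handling the signs, exactly as you describe.
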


\begin{Rem}
From the proof in Section \ref{circtabloid}, we have that
\begin{eqnarray*} \mathcal{W}_k(q,N_1) &=& \sum_{\lambda \vdash k} {k \over
l(\lambda)}{l(\lambda) \choose d_1,~d_2,~\dots~d_r}
\bigg(\prod_{i=1}^{l(\lambda)} (1+q+q^2+\dots +
q^{\lambda_i-1})\bigg)N_1^{l(\lambda)}\\
&=& \sum_{i=1}^k {k \over i}\bigg(\sum_{\stackrel{\lambda\vdash
k}{l(\lambda)=i}}{ i\choose d_1,~d_2,~\dots~d_r} \prod_{j=1}^{i}
(1+q+q^2+\dots + q^{\lambda_j-1})\bigg)N_1^{i}
\end{eqnarray*}
which implies also that $$P_{i,k}(q) = {k \over
i}\sum_{\stackrel{\lambda\vdash k}{l(\lambda)=i}}{ i\choose
d_1,~d_2,~\dots~d_r} \prod_{j=1}^{i} (1+q+q^2+\dots +
q^{\lambda_j-1}).$$

\noindent Comparing the coefficients of this identity with the
coefficients in Proposition \ref{EkAsBinom} seems to give a
combinatorial identity that seems interesting in its own right.
\end{Rem}

\section{Factorizations of $N_k$}

We now introduce a family of $k$-by-$k$ matrices $M_k$ which, for
elliptic curves, yield a determinantal formula for $N_k$ in terms of
$q$ and $N_1$.

\begin{Thm}
\label{detformu}

Let $M_1 = \left[ -N_1\right]$, $M_2 = \left[\begin{matrix} 1+q-N_1
& -1-q \\ -1-q & 1+q-N_1\end{matrix}\right]$, and for $k\geq 3$, let
$M_k$ be the $k$-by-$k$ ``three-line'' circulant matrix
$$\left[
\begin{matrix}
1+q-N_1 & -1 & 0 &\dots & 0 &-q \\
-q & 1 +q -N_1 &  -1 & 0 & \dots& 0 \\
\dots & \dots & \dots & \dots & \dots & \dots\\
0 & \dots & -q & 1+q-N_1 & -1 & 0 \\
0 & \dots & 0 & -q & 1+q-N_1 & -1 \\
-1 & 0 & \dots & 0 & -q & 1+q-N_1
\end{matrix}\right].$$

The sequence of integers $N_k = \#C(\f_{q^k})$ satisfies the
relation $$N_k = -\det M_k \mathrm{~~for~all~}k\geq 1.$$
\end{Thm}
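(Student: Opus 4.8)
The plan is to recognize that for $k \geq 3$ the matrix $M_k$ is a \emph{circulant} matrix and to diagonalize it via the discrete Fourier transform. First I would read off its first row as $(1+q-N_1,\,-1,\,0,\dots,0,\,-q)$ and verify that $M_k$ is the circulant whose entries satisfy $(M_k)_{ij} = c_{(j-i)\bmod k}$ with $c_0 = 1+q-N_1$, $c_1 = -1$, $c_{k-1}=-q$, and all other $c_m = 0$; equivalently its symbol is $f(z) = (1+q-N_1) - z - q z^{k-1}$. Standard circulant theory then gives that the eigenvalues of $M_k$ are $\lambda_\ell = f(\omega^\ell)$ for $\ell = 0,1,\dots,k-1$, where $\omega = e^{2\pi i/k}$; since $\omega^{(k-1)\ell} = \omega^{-\ell}$ this reads $\lambda_\ell = (1+q-N_1) - \omega^\ell - q\,\omega^{-\ell}$.

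The key algebraic step is to factor each eigenvalue using the relations $\alpha_1 + \alpha_2 = 1+q-N_1$ and $\alpha_1\alpha_2 = q$ coming from the rationality and functional equation of $Z(E,T)$. Writing $z = \omega^\ell$ and clearing the $\omega^{-\ell}$, I would compute
$$z\,\lambda_\ell = (\alpha_1+\alpha_2)z - z^2 - \alpha_1\alpha_2 = -(z-\alpha_1)(z-\alpha_2),$$
so that $\lambda_\ell = -\,\omega^{-\ell}(\omega^\ell - \alpha_1)(\omega^\ell - \alpha_2)$.

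Now I take the product of the eigenvalues to obtain the determinant:
$$\det M_k = \prod_{\ell=0}^{k-1}\lambda_\ell = (-1)^k \Bigg(\prod_{\ell=0}^{k-1}\omega^{-\ell}\Bigg) \Bigg(\prod_{\ell=0}^{k-1}(\omega^\ell - \alpha_1)\Bigg)\Bigg(\prod_{\ell=0}^{k-1}(\omega^\ell - \alpha_2)\Bigg).$$
Here I would invoke $\prod_{\ell=0}^{k-1}(x - \omega^\ell) = x^k - 1$, which upon reversing each factor gives $\prod_\ell(\omega^\ell - \alpha) = (-1)^k(\alpha^k - 1)$, together with $\prod_{\ell=0}^{k-1}\omega^{\ell} = \omega^{k(k-1)/2} = (-1)^{k-1}$. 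Collecting all the signs collapses the prefactor to $-1$, yielding $\det M_k = -(\alpha_1^k - 1)(\alpha_2^k - 1)$. Expanding and using $\alpha_1\alpha_2 = q$ gives $(\alpha_1^k-1)(\alpha_2^k-1) = 1 + q^k - \alpha_1^k - \alpha_2^k = N_k$, which is exactly the claimed identity $N_k = -\det M_k$.

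Finally, I would dispose of the degenerate cases $k=1,2$ separately, since there the super- and sub-diagonals wrap around and coincide, so the displayed ``three-line'' pattern is not literally the matrix; a direct $1\times 1$ and $2\times 2$ computation (or the observation that $M_1,M_2$ are still circulant with first rows $(-N_1)$ and $(1+q-N_1,\,-(1+q))$) confirms the formula and matches the table of $N_1, N_2$. The main obstacle I anticipate is not conceptual but bookkeeping: correctly tracking the competing powers of $(-1)$ arising from the factor $(-1)^k$ per eigenvalue, the two factor-reversals in the $x^k-1$ products, and the product $\prod_\ell \omega^\ell = (-1)^{k-1}$, so that they genuinely cancel to a single overall sign independent of the parity of $k$.
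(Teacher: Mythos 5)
Your proof is correct, and it is essentially the paper's own second proof of Theorem \ref{detformu}: the paper likewise factors $N_k = (1-\alpha_1^k)(1-\alpha_2^k) = -\prod_{j=1}^k\bigl((1+q-N_1) - q\omega_k^j - \omega_k^{k-j}\bigr)$ using $\alpha_1\alpha_2 = q$ and identifies these $k$ factors as the eigenvalues of the circulant $M_k$. You merely run the argument in the opposite direction (from eigenvalues to $N_k$ rather than from $N_k$ to eigenvalues) and make the sign bookkeeping explicit.
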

We provide two proofs of this theorem, one which
utilizes the three term recurrence from Section \ref{LucSec},
and one which introduces a new sequence of polynomials which are
interesting in their own right.

\subsection{Connection to orthogonal polynomials}

\label{chebsec}

Recall from the zeta function of an elliptic curve, $Z(E,T)$, we
derived a three term recurrence relation for the sequence $\{G_k =
1+q^k - N_k\}$:
\begin{eqnarray} \label{reccc}
G_{k+1} = (1+q-N_1)G_{k} - qG_{k-1}.\end{eqnarray} Such a relation
is indicative of an interpretation of the $(1+q^k-N_k)$'s as a
sequence of orthogonal polynomials.  In particular, any sequence of
orthogonal polynomials, $\{P_k(x)\}$, satisfies
\begin{eqnarray} \label{orthrec}
P_{k+1}(x) = (a_k x + b_k)P_k(x) + c_k P_{k-1}(x)
\end{eqnarray}
where $a_k$, $b_k$ and $c_k$ are constants that depend on $k \in
\n$.  Additionally, it is customary to initialize $P_{-k}(x) =0$,
$P_0(x) = 1,$ and $P_1(x) = a_0 x + b_0.$

Since we can think of the bivariate $N_k(q,N_1)$ as univariate
polynomials in variable $N_1$ with constants from field $\q(q)$, it
follows that recurrence (\ref{reccc}) is a special case of 
recurrence (\ref{orthrec}), therefore $\{P_k(x)\}_{k=1}^\infty = 
\{(1+q^k-N_k)(N_1)\}_{k=1}^\infty$ are a 
family of orthogonal polynomials.   In particular, we plug in the following 
values for the $a_k$, $b_k$, and $c_k$'s: 
\begin{align*}
a_k &= -1 \hspace{3em} \mathrm{~for~}k\geq 0 \\
b_k &= 1+q \hspace{2em} \mathrm{~for~}k\geq 0, \\ 
c_1
&= -2q \hspace{4em} \mathrm{and} \\ c_k &= -q \hspace{3em}
\mathrm{~for~}k\geq 2.
\end{align*}  (Note that we take $c_1$ to be $-2q$ since $G_0=1+q^0-N_0=2$, 
but we wish to normalize so that $P_0(x)=1$.)

In fact, the family $\{1+q^k-N_k\}_{k=1}^\infty$ can be described in terms of
a classical sequence of orthogonal polynomials.  
Namely $T_k(x)$ denotes the $k$th Chebyshev
(Tchebyshev) polynomials of the first kind, which are defined as
$\cos(k\theta)$ written out in terms of $x$ such that $\theta =
\arccos x$.
Equivalently, we can define $T_k(x)$ as the expansion of
$\alpha^k+\beta^k$ in terms of powers of $\cos \theta$ where
\begin{eqnarray*} \alpha &=& \cos \theta + i \sin \theta \\
\beta &=& \cos \theta - i \sin \theta. \end{eqnarray*}

\begin{Thm} \label{Chebk}
Considering the $(1+q^k-N_k)$'s as univariate polynomials in $N_1$
over the field $\q(q)$, we obtain
$$1+ q^k - N_k = 2q^{k/2}T_{k}\bigg((1+q-N_1)/2q^{1/2}\bigg).$$
\end{Thm}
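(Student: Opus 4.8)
The plan is to reduce everything to the power-sum identity $1+q^k-N_k = \alpha_1^k + \alpha_2^k$, which is exactly (\ref{preCheb}), and then to recognize the right-hand side as the standard generating identity for the Chebyshev polynomials after a square-root rescaling. Recall from the definition given above that if we set $\alpha = \cos\theta + i\sin\theta$ and $\beta = \cos\theta - i\sin\theta$, so that $\alpha\beta = 1$ and $\alpha + \beta = 2\cos\theta$, then $\alpha^k + \beta^k = 2\cos(k\theta) = 2T_k(\cos\theta)$. Equivalently, in purely algebraic terms, whenever $z_1, z_2$ are the two roots of $z^2 - 2xz + 1$ we have $z_1^k + z_2^k = 2T_k(x)$. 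This is the form I would use.

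First I would record, as established in the excerpt, that $\alpha_1 + \alpha_2 = 1+q-N_1$ and $\alpha_1\alpha_2 = q$. The product being $q$ rather than $1$ is the only obstruction to applying the Chebyshev identity directly, so I would clear it by rescaling: put $\gamma_i = \alpha_i / q^{1/2}$ for $i = 1,2$. Then $\gamma_1\gamma_2 = \alpha_1\alpha_2/q = 1$ and $\gamma_1 + \gamma_2 = (1+q-N_1)/q^{1/2} = 2x$, where $x = (1+q-N_1)/(2q^{1/2})$. Hence $\gamma_1, \gamma_2$ are precisely the roots of $z^2 - 2xz + 1$, so $\gamma_1^k + \gamma_2^k = 2T_k(x)$. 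Multiplying back through by $q^{k/2}$ gives
\begin{equation*}
1 + q^k - N_k = \alpha_1^k + \alpha_2^k = q^{k/2}\bigl(\gamma_1^k + \gamma_2^k\bigr) = 2q^{k/2}T_k\!\left(\frac{1+q-N_1}{2q^{1/2}}\right),
\end{equation*}
which is the asserted formula.

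The step that needs the most care --- and the only thing that could look suspicious --- is the presence of the half-integer power $q^{1/2}$, since the left-hand side is an honest polynomial in $q$ and $N_1$. I would close this by invoking the parity of the Chebyshev polynomials: $T_k(x)$ has only monomials $x^j$ with $j \equiv k \pmod 2$. Thus in $q^{k/2}T_k\bigl((1+q-N_1)/(2q^{1/2})\bigr)$ each such monomial contributes a factor $q^{(k-j)/2}$ with $k-j$ a nonnegative even integer, so all fractional powers cancel and the right-hand side is genuinely an element of $\q(q)[N_1]$, exactly as the theorem asserts. An equally clean alternative would be a short induction: $T_k$ obeys $T_{k+1}(x) = 2xT_k(x) - T_{k-1}(x)$, and substituting $x = (1+q-N_1)/(2q^{1/2})$ shows that $G_k \eqdef 2q^{k/2}T_k(x)$ satisfies recurrence (\ref{reccc}), namely $G_{k+1} = (1+q-N_1)G_k - qG_{k-1}$; checking $k=1,2$ against $1+q-N_1$ and $1+q^2-N_2$ then finishes by Proposition \ref{akId}. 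I expect no genuine difficulty beyond this bookkeeping.
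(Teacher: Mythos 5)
Your proof is correct, but its primary route is organized differently from the paper's. The paper argues by induction: it derives the Chebyshev recurrence $T_{k+1}(x) = 2xT_k(x) - T_{k-1}(x)$ from the trigonometric definition, verifies the base case, and uses the recurrence of Proposition \ref{akId} to show that the rescaled quantities $(1+q^k-N_k)/(2q^{k/2})$ satisfy that same recurrence at $x=(1+q-N_1)/(2q^{1/2})$ --- which is exactly the alternative you sketch in your final sentences. Your main argument instead normalizes the Frobenius eigenvalues, setting $\gamma_i=\alpha_i/q^{1/2}$ so that $\gamma_1\gamma_2=1$ and $\gamma_1+\gamma_2=2x$, and then invokes the closed-form fact that the power sums of the roots of $z^2-2xz+1$ equal $2T_k(x)$; multiplying by $q^{k/2}$ finishes. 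The two arguments share the same mathematical core, since that closed-form fact is itself proved by the very three-term recurrence the paper writes down; but your packaging buys conceptual clarity, as it exhibits precisely why Chebyshev polynomials appear: after dividing by $q^{1/2}$ the eigenvalues become a reciprocal pair, the algebraic avatar of $e^{\pm i\theta}$. The cost is that you must promote the trigonometric identity, valid for $x\in[-1,1]$, to a formal polynomial identity so that it can be applied over the extension field containing $q^{1/2}$ and the $\alpha_i$; you flag this correctly, and the standard agree-on-infinitely-many-points argument closes it. Your parity observation, showing that the half-integer powers of $q$ cancel so that the right-hand side genuinely lies in $\q(q)[N_1]$, is a worthwhile addition that the paper leaves implicit.
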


\begin{proof}
We note that Chebyshev polynomials satisfy initial conditions
$T_0(x) = 1$, and $T_1(x)=x$ and the three-term recurrence
$$T_{k+1}(x) = 2xT_{k}(x) - T_{k-1}(x)$$ for $k\geq 1$ since
\begin{eqnarray*}
T_{k+1}(x) &=& \alpha^{k+1} + \beta^{k+1} \\
&=&
(\alpha+\beta)(\alpha^{k}+\beta^{k})-\alpha\beta(\alpha^{k-1}+\beta^{k-1})
\\ &=& 2\cos \theta ~T_{k}(x) - T_{k-1}(x)
\\ &=& 2xT_{k}(x) - T_{k-1}(x).
\end{eqnarray*}
Let $x = {1+q - N_1 \over 2\sqrt{q}}$. Clearly Theorem \ref{Chebk}
holds for $k=1$, and additionally, by Proposition \ref{Lrecur}, the
${1+q^{k}-N_{k}\over 2q^{k/2}}$'s satisfy the same recurrence as the
$T_k(x)$'s.  Namely
\begin{eqnarray*}{1+q^{k+1}-N_{k+1}\over 2q^{(k+1)/ 2}}   &=&
{(1+q-N_1)(1+q^k-N_k)-q(1+q^{k-1}-N_{k-1})\over 2q^{(k+1)/2}} \\&=&
2\bigg({1+q-N_{1}\over 2q^{1/2}} \bigg)\bigg({1+q^{k}-N_{k}\over
2q^{k/ 2}}\bigg) - \bigg({1+q^{k-1}-N_{k-1}\over 2q^{(k-1)/
2}}\bigg).
\end{eqnarray*}
\end{proof}

Another way to foresee the appearance of Chebyshev polynomials is by
noting that in the case that we plug in $q=0$ or $q=1$, we obtain a
family of univariate polynomials $\tilde{N}_k$ with the property
$\tilde{N}_{mk} = \tilde{N}_m(\tilde{N}_k) =
\tilde{N}_k(\tilde{N}_m)$.  It is a fundamental theorem of Chebyshev
polynomials that families of univariate polynomials with such a
property are very restrictive. In particular, from \cite{Cheb2} as
described on page $33$ of \cite{Cheb}:  If $\{\tilde{N}_k\}$ is a
sequence of integral univariate polynomials of degree $k$ with the
property
$$\tilde{N}_{mn} = \tilde{N}_m(\tilde{N}_n) =
\tilde{N}_n(\tilde{N}_m)$$ for all positive integers $m$ and $n$,
then $\tilde{N}_k$ must either be a linear transformation of
\begin{enumerate}
\item $x^k$ or

\item $T_k(x)$, the Chebyshev polynomial of the first
kind,
\end{enumerate}
where a linear transformation of a polynomial $f(x)$ is of the form
$$A\cdot f\bigg( (x-B)/A\bigg) + B \mathrm{~~or~equivalently~~} \bigg(f(\overline{A}x+\overline{B})- \overline{B}\bigg)\bigg/\overline{A}.$$

In particular we get formulas for $\mathcal{W}_k(0,N_1)$ and
$\mathcal{W}_k(1,N_1)$ (resp. $N_k(0,N_1)$ and $N_k(1,N_1)$) which
are indeed linear transformations of $x^k$ and $T_k(x)$
respectively.

\begin{Prop} \label{Chebforms}
\begin{eqnarray}N_k(0,N_1) &=& -(1-N_1)^k +1, \\
\label{pluginone} N_k(1,N_1) &=& -2T_k(-N_1/2+1) +2.
\end{eqnarray}
\end{Prop}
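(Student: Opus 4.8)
The plan is to derive both identities by specializing the general formula $N_k = 1 + q^k - \alpha_1^k - \alpha_2^k$, where $\alpha_1,\alpha_2$ satisfy $\alpha_1+\alpha_2 = 1+q-N_1$ and $\alpha_1\alpha_2 = q$, at the two special values $q=1$ and $q=0$. The two cases are handled by genuinely different mechanisms, so I would treat them separately.

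For the value $q=1$, I would simply substitute $q=1$ into Theorem \ref{Chebk}. Since $2q^{k/2}=2$ and $(1+q-N_1)/2q^{1/2} = (2-N_1)/2 = -N_1/2+1$, the theorem reads $1+1-N_k = 2T_k(-N_1/2+1)$, which upon rearranging gives exactly the second claimed formula $N_k(1,N_1) = -2T_k(-N_1/2+1)+2$. This is the entire argument for (\ref{pluginone}); no further work is needed, because Theorem \ref{Chebk} is already established for generic $q$ and in particular at $q=1$, where the factor $q^{1/2}$ causes no difficulty.

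For the value $q=0$, Theorem \ref{Chebk} degenerates, since it involves division by $q^{1/2}$, so I would instead specialize the root data directly. Setting $q=0$ forces $\alpha_1\alpha_2 = 0$, so one root is $0$ and the other equals $\alpha_1+\alpha_2 = 1-N_1$. For $k\geq 1$ we therefore have $\alpha_1^k+\alpha_2^k = (1-N_1)^k + 0^k = (1-N_1)^k$, whence $N_k(0,N_1) = 1 + 0^k - (1-N_1)^k = -(1-N_1)^k+1$, which is the first claimed formula.

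The only subtlety I anticipate is precisely this degeneracy of the Chebyshev substitution at $q=0$: the change of variables $x=(1+q-N_1)/2q^{1/2}$ blows up there, so the $q=0$ line of the proposition cannot be read off Theorem \ref{Chebk} and must instead be obtained from the explicit vanishing of one root. Everything else is a routine substitution; as an optional sanity check one can verify $k=1,2$ against the explicit expansions of $N_2,\dots,N_5$ listed earlier to confirm the signs.
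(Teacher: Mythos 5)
Your proof is correct, and it splits into two halves of different character when compared with the paper. For $q=1$ you do exactly what the paper does: the identity is read off as an immediate corollary of Theorem \ref{Chebk}, with $2q^{k/2}=2$ and $(1+q-N_1)/2q^{1/2}=1-N_1/2$. For $q=0$, however, your route genuinely differs. You specialize the root data: at $q=0$ the quadratic $T^2-(1+q-N_1)T+q$ has roots $1-N_1$ and $0$, so $\alpha_1^k+\alpha_2^k=(1-N_1)^k$ and $N_k(0,N_1)=1-(1-N_1)^k$; this is legitimate because $\alpha_1^k+\alpha_2^k$ is a symmetric function of the roots, hence a polynomial in $e_1=1+q-N_1$ and $e_2=q$, so the (purely formal) substitution $q=0$ commutes with taking power sums. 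The paper instead argues combinatorially through its spanning-tree interpretation (Theorem \ref{whser}): when $q=0$, only spanning trees of the wheel graph $W_k$ in which each spoke meets the tail of its arc carry nonzero weight, so only the placement of the $m$ spokes matters, the coefficient of $N_1^m$ in $\mathcal{W}_k(0,N_1)$ is $\binom{k}{m}$, giving $\mathcal{W}_k(0,N_1)=(1+N_1)^k-1$, and then $N_k(q,N_1)=-\mathcal{W}_k(q,-N_1)$ finishes. Your argument is shorter and more elementary, relying only on the rationality of the zeta function and correctly flagging why Theorem \ref{Chebk} cannot be invoked at $q=0$; the paper's argument buys an explicit enumeration of the weight-zero spanning trees, a combinatorial statement in keeping with its central theme and of some independent interest.
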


\begin{proof}
The coefficient of $N_1^m$ in $\mathcal{W}_k(0,N_1)$ is the number
of directed rooted spanning trees of $W_k$ with $m$ spokes and arcs
always directed counter-clockwise.  In particular it is only the
placement of the spokes that matter at this point since the
placement of the arcs is now forced.  Thus the coefficient of
$N_1^m$ in $\mathcal{W}_k(0,N_1)$ is ${k \choose m}$ for all $1 \leq
m \leq k$. Thus the generating function $\mathcal{W}_k(0,N_1)$
satisfies
$$\mathcal{W}_k(0,N_1) = (1+N_1)^k-1$$ since the constant term of
$\mathcal{W}_k(0,N_1)$ is zero.  Using the relation $N_k(q,N_1) =
-\mathcal{W}_k(q,-N_1)$ completes the proof in the $q=0$ case.  We
also note that $-(1-x)^k +1$ is a linear transformation of $x^k$ via
$A=-1$ and $B=1$.
The case for $q=1$ is a corollary of Theorem \ref{Chebk}.
\end{proof}

\subsection{First proof of Theorem \ref{detformu}: Using orthogonal polynomials}

As an application of Theorem \ref{Chebk}, we use the theory of orthogonal polynomials to learn properties of the $(1+q^k-N_k)$'s.
For example, one of the properties of a sequence of orthogonal polynomials is an interpretation as the determinants of a
family of tridiagonal $k$-by$k$ matrices.

\begin{Prop} \label{firmatrix}
$$1 + q^k - N_k = \det \left[ \begin{matrix}
 1+q-N_1         &  -2q        & 0         & 0      & 0      &     0 \cr
-1            &  1+q-N_1     &-q          & 0      & 0      & 0 \cr
0 & -1         & 1+q-N_1      &-q       & 0      &     0 \cr \vdots
& \vdots &\vdots     & \ddots & \ddots & 0     \cr 0 &0 &0 & \cdots
& 1+q-N_1 & -q \cr 0 & 0 &0 & \cdots & -1 & 1+q-N_1
\end{matrix}\right].$$  We denote this matrix as $M_k^\prime$.
\end{Prop}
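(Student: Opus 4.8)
The plan is to show that the determinants $D_k \eqdef \det M_k^\prime$ satisfy the very same three-term recurrence (\ref{reccc}) and initial conditions as the sequence $G_k = 1+q^k-N_k$, and then invoke uniqueness of solutions to a second-order linear recurrence.

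First I would set up the recurrence by cofactor expansion. For $k \geq 3$ the matrix $M_k^\prime$ is tridiagonal with diagonal entries $1+q-N_1$, sub-diagonal entries $-1$, and super-diagonal entries $-q$, the anomalous value $-2q$ sitting only in the top-left $2\times 2$ corner. Applying the standard continuant recurrence for tridiagonal determinants (expand along the last row, then reduce the remaining off-diagonal minor) yields
$$D_k = (1+q-N_1)\,D_{k-1} - (-1)(-q)\,D_{k-2} = (1+q-N_1)\,D_{k-1} - q\,D_{k-2}$$
for all $k \geq 3$. The key observation is that the principal minors produced are exactly $M_{k-1}^\prime$ and $M_{k-2}^\prime$, and since $k \geq 3$ the only entries entering the expansion are the ``uniform'' ones $-1$, $-q$, and $1+q-N_1$; the special entry $-2q$ never participates, so it is effectively quarantined inside the base cases.

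Second, I would verify the base cases. Directly $D_1 = 1+q-N_1 = G_1$, while
$$D_2 = (1+q-N_1)^2 - (-2q)(-1) = (1+q-N_1)^2 - 2q = 1+q^2+N_1^2 - (2+2q)N_1,$$
which equals $1+q^2-N_2 = G_2$ after substituting $N_2 = (2+2q)N_1 - N_1^2$. This is precisely the role of the $-2q$: the recurrence (\ref{reccc}) run backwards forces $G_0 = 2$, whereas a naive empty-determinant convention would assign the value $1$, and the extra factor of $q$ in the corner entry exactly compensates for this discrepancy, matching the normalization remark preceding the statement.

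Finally, since $\{G_k\}$ obeys (\ref{reccc}) (equivalently Proposition \ref{akId}) and both $\{D_k\}$ and $\{G_k\}$ agree for $k \in \{1,2\}$ and satisfy the same recurrence for $k \geq 3$, a routine induction gives $D_k = G_k = 1+q^k-N_k$ for all $k \geq 1$. The only genuinely delicate point is confirming that the cofactor expansion returns the principal submatrices $M_{k-1}^\prime$ and $M_{k-2}^\prime$ unaltered, so that the recurrence closes within the same family; once that is checked, the remainder is bookkeeping.
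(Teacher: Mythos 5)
Your proof is correct, and it is a more self-contained route than the one the paper takes. The paper proves Proposition \ref{firmatrix} by quoting the general determinant representation of orthogonal polynomials (Schur's determinant formula, \cite{SchurSLC}): any sequence with $P_0(x)=1$, $P_1(x)=a_0x+b_0$ and recurrence (\ref{orthrec}) equals the determinant of the associated tridiagonal matrix, and the proposition then follows by plugging in the values $a_i=-1$, $b_i=1+q$, $c_1=-2q$, $c_k=-q$ fixed in Section 4.1. You instead prove the needed instance of that general fact from scratch: the continuant expansion shows $\det M_k^\prime$ satisfies the same three-term recurrence (\ref{reccc}) as $G_k=1+q^k-N_k$ (the latter being Proposition \ref{akId}), the two sequences agree at $k=1,2$, and uniqueness of solutions to a second-order recurrence finishes the argument. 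The mathematical content is identical --- the cited Schur formula is itself established by exactly your expansion --- but your version can be checked without the reference, and it makes explicit two points the paper leaves implicit: the restriction $k\geq 3$ guaranteeing that the anomalous corner entry $-2q$ never enters the expansion itself (it is confined to the principal minors $M_{k-1}^\prime$ and $M_{k-2}^\prime$, so the recurrence closes within the family), and the normalization role of $-2q$, which the paper only addresses in the parenthetical remark that $c_1=-2q$ because $G_0=2$ while one wants $P_0(x)=1$. What the paper's phrasing buys in exchange is brevity and the explicit placement of $\{1+q^k-N_k\}$ inside the classical theory of orthogonal polynomials, which is the organizing theme of that section.
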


\begin{proof}

Given a sequence of orthogonal polynomials satisfying $P_{0}(x) =
1$, $P_{1}(x) = a_0 x + b_0$ and recurrence (\ref{orthrec}), we have
the formula \cite{SchurSLC}
$$P_k(x) = \det \left[ \begin{matrix}
 a_0x + b_0         &  c_1        & 0         & 0      & 0      &     0 \cr
-1            &  a_1x+b_1     &c_2          & 0      & 0      & 0
\cr 0 & -1 &  a_2x+b_2     &c_3       & 0      &     0 \cr \vdots &
\vdots &\vdots & \ddots & \ddots & 0     \cr 0 &0 &0 & \cdots &
a_{k-2}x+b_{k-2} & c_k \cr 0 & 0 &0 & \cdots & -1 & a_{k-1}x+b_{k-2}
\end{matrix}\right].$$
Plugging in the $a_i$, $b_i$, and $c_i$'s as in Section $4.1$ yields the
formula.
\end{proof}

\begin{Rem}
Alternatively, we can use symmetric functions and the Newton
Identities \cite{EC2} to obtain these determinant identities, as
described in \cite[Chap. 7]{Gars} or \cite[Chap.
5]{MusThesis}.\end{Rem}

We can prove Theorem \ref{detformu} via Proposition \ref{firmatrix}
followed by an algebraic manipulation of matrix $M_k$.  Namely, by
using the multilinearity of the determinant, and expansions about
the first row followed by the first column, we obtain
$$\det(M_k) =\det(A_k) + \det(B_k) + \det(C_k) + \det(D_k)$$ where $A_k$, $B_k$, $C_k$,
and $D_k$ are the following $k$-by-$k$ matrices:

$$A_k =\left[ \begin{matrix}
 1+q-N_1         &  -1        & 0         & 0      & 0      &     0 \cr
-q            &  1+q-N_1     &-1          & 0      & 0      & 0 \cr
0 & -q         & 1+q-N_1      &-1       & 0      &     0 \cr \vdots
& \vdots &\vdots     & \ddots & \ddots & 0     \cr 0 &0 &0 & \cdots
& 1+q-N_1 & -1 \cr 0 & 0 &0 & \cdots & -q & 1+q-N_1
\end{matrix}\right].$$

$$B_k =\left[ \begin{matrix}
 0         &  0        & 0         & 0      & 0      &     -q \cr
-q            &  1+q-N_1     &-1          & 0      & 0      & 0 \cr
0 & -q         & 1+q-N_1      &-1       & 0      &     0 \cr \vdots
& \vdots &\vdots     & \ddots & \ddots & 0     \cr 0 &0 &0 & \cdots
& 1+q-N_1 & -1 \cr 0 & 0 &0 & \cdots & -q & 1+q-N_1
\end{matrix}\right].$$

$$C_k =\left[ \begin{matrix}
0         &  -1        & 0         & 0      & 0      &     0 \cr 0 &
1+q-N_1     &-1          & 0      & 0      & 0 \cr 0 & -q & 1+q-N_1
&-1       & 0      &     0 \cr \vdots & \vdots &\vdots & \ddots &
\ddots & 0     \cr 0 &0 &0 & \cdots & 1+q-N_1 & -1 \cr -1 & 0 &0 &
\cdots & -q & 1+q-N_1
\end{matrix}\right].$$

$$D_k =\left[ \begin{matrix}
 0         &  0        & 0         & 0      & 0      &     -q \cr
0            &  1+q-N_1     &-1          & 0      & 0      & 0 \cr 0
& -q         & 1+q-N_1      &-1       & 0      &     0 \cr \vdots &
\vdots &\vdots     & \ddots & \ddots & 0     \cr 0 &0 &0 & \cdots &
1+q-N_1 & -1 \cr -1 & 0 &0 & \cdots & -q & 1+q-N_1
\end{matrix}\right].$$
Cyclic permutation of the rows of $B_k$ and the columns of $C_k$
yield upper-triangular matrices with $-1$'s (resp. $-q$)'s on the
diagonal.  Given that the sign of such a cyclic permutation is
$(-1)^{k-1}$, we obtain $\det(B_k) + \det(C_k) = -q^k-1$.
Additionally, by expanding $\det(D_k)$ about the first row followed
by the first column, we obtain $\det(D_k) = -q\det(A_{k-2})$.  In
conclusion $$1+q^k + \det(M_k) = \det(A_k) - q\det(A_{k-2}).$$ After
transposing $M_k^\prime$, by analogous methods we obtain $$\det
M_k^\prime = \det(A_k) - q\det(A_{k-2})$$ and thus the desired
formula $\det M_k = -N_k$.

\subsection{Second proof of Theorem \ref{detformu}: Using the zeta function}

Alternatively, we  note that we can factor $$N_k = 1 + q^k -
\alpha_1^k - \alpha_2^k$$ using the fact that $q=\alpha_1\alpha_2$.
Consequently,
$$N_k = (1-\alpha_1^k)(1-\alpha_2^k)$$ and we can factor each of
these two terms using cyclotomic polynomials.  We recall that
$(1-x^k)$ factors as $$1-x^k=\prod_{d|k} Cyc_d(x)$$ where $Cyc_d(x)$
is a monic irreducible polynomial with integer coefficients.  We can
similarly factor $N_k$ as

$$N_k = \prod_{d|k} Cyc_d(\alpha_1)Cyc_d(\alpha_2).$$

These factors are therefore bivariate analogues of the cyclotomic
polynomials, and we refer to them henceforth as {\bf elliptic
cyclotomic polynomials},
denoted as $ECyc_d$.  

\begin{Def}
We define the elliptic cyclotomic polynomials to be a sequence of polynomials
in variables $q$ and $N_1$ such that for $d \geq 1$,
$$ECyc_d = Cyc_d(\alpha_1)Cyc_d(\alpha_2),$$ where $\alpha_1$ and $\alpha_2$ are
the two roots of $$T^2 - (1+q-N_1)T + q.$$  We verify that they can be
expressed in terms of $q$ and $N_1$ by the following proposition.
\end{Def}

\begin{Prop}
Writing down $ECyc_d$ in terms of $q$ and $N_1$
yields irreducible bivariate polynomials with integer coefficients.
\end{Prop}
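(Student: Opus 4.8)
The plan is to establish both halves of the statement — that $ECyc_d$ actually lies in $\z[q,N_1]$, and that it is irreducible there. First I would dispose of integrality: $ECyc_d = Cyc_d(\alpha_1)Cyc_d(\alpha_2)$ is a \emph{symmetric} function of $\alpha_1,\alpha_2$ with integer coefficients, since interchanging $\alpha_1\leftrightarrow\alpha_2$ merely swaps the two factors and each $Cyc_d$ has integer coefficients. By the fundamental theorem of symmetric functions it is thus an integer polynomial in the elementary symmetric functions $e_1=\alpha_1+\alpha_2=1+q-N_1$ and $e_2=\alpha_1\alpha_2=q$, hence an element of $\z[q,N_1]$. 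This is the routine half and also justifies the ``We verify'' clause of the definition.

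Next I would regard $ECyc_d$ as a polynomial in the single variable $N_1$ over the field $\q(q)$ and locate its roots. Because the roots of $Cyc_d$ are exactly the primitive $d$-th roots of unity, $ECyc_d$ vanishes precisely when $\alpha_1$ or $\alpha_2$ equals such a $\zeta$; in either case, using $\alpha_1\alpha_2=q$ and $\alpha_1+\alpha_2=1+q-N_1$, the value is forced to be
$$N_1=N_1(\zeta):=1+q-\zeta-q\zeta^{-1}.$$
A brief asymptotic check (letting $N_1\to\infty$ sends one root of $T^2-(1+q-N_1)T+q$ to infinity and, by $\alpha_1\alpha_2=q$, the other to $0$, where $Cyc_d$ takes the value $\pm1$) shows the degree in $N_1$ is exactly $\varphi(d)$ with leading coefficient $\pm1$. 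I would then verify the $N_1(\zeta)$ are pairwise distinct over $\q(q)$: an equality $N_1(\zeta)=N_1(\zeta')$ with $\zeta\neq\zeta'$ simplifies to $\zeta\zeta'=q$, which is impossible since the left side is a root of unity while $q$ is transcendental. Hence $ECyc_d=\pm\prod_{\zeta}(N_1-N_1(\zeta))$ is separable of degree $\varphi(d)$, with $\zeta$ ranging over the primitive $d$-th roots of unity.

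The irreducibility core is the step I expect to be the main obstacle. The key point is that $\mathrm{Gal}(\q(q,\zeta_d)/\q(q))\cong(\z/d\z)^{\times}$ acts on the root set by $\sigma_a\colon N_1(\zeta)\mapsto N_1(\zeta^a)$, and this action is transitive because $\zeta^a$ runs over all primitive $d$-th roots as $a$ runs over $(\z/d\z)^{\times}$. Therefore all $\varphi(d)$ roots lie in a single Galois orbit and share one minimal polynomial over $\q(q)$; being distinct and $\varphi(d)$ in number, that minimal polynomial is forced to be $ECyc_d$ itself up to sign, so $ECyc_d$ is irreducible over $\q(q)$. Both ingredients here — genuine distinctness of the $N_1(\zeta)$ and transitivity of the Galois action — rest on treating $q$ as transcendental, which is exactly what prevents roots of unity from colliding with polynomials in $q$.

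To descend from $\q(q)[N_1]$ to $\z[q,N_1]$ I would finish with Gauss's lemma: the leading coefficient $\pm1$ makes $ECyc_d$ primitive as a polynomial in $N_1$ over $\z[q]$, so its irreducibility over $\q(q)$ upgrades to irreducibility in $\z[q,N_1]$ (equivalently in $\q[q,N_1]$). The cases $\varphi(d)=1$ (that is, $d\in\{1,2\}$) are automatic since $ECyc_d$ is then linear in $N_1$, and the uniform argument covers them as well. I anticipate the only genuinely delicate bookkeeping, beyond the Galois transitivity, to be the degree and leading-coefficient count — in particular pinning down $Cyc_d(0)=\pm1$ — needed to guarantee that the product formula captures \emph{all} roots, each simple, so that the minimal polynomial really is the whole of $ECyc_d$.
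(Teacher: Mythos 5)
Your proof is correct, but it takes a genuinely different route from the paper's. On integrality you argue via the Fundamental Theorem of Symmetric Functions applied to $e_1=\alpha_1+\alpha_2=1+q-N_1$, $e_2=\alpha_1\alpha_2=q$; this is exactly the paper's Remark \ref{fundsymm}, whereas the paper's proof proper instead expands $Cyc_d(\alpha_1)Cyc_d(\alpha_2)$ into terms of the form $q^i(\alpha_1^j+\alpha_2^j)=q^i(1+q^j-N_j)$ and quotes the integrality of the $N_j$ from Theorem \ref{Gar} --- a cosmetic difference. The real divergence is irreducibility. The paper argues arithmetically: any integral factor $F$ of $ECyc_d$ is a sub-product $\prod_{i\in S}(1-\omega^i\alpha_1)\prod_{j\in T}(1-\omega^j\alpha_2)$, and since $\alpha_1,\alpha_2$ are complex conjugates by Hasse's theorem (the Riemann hypothesis for elliptic curves), integrality forces $F=\overline{F}$, hence $S=T$, and then irreducibility of $Cyc_d$ over $\q$ forces $S=T$ to be the full set of residues prime to $d$. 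You instead stay entirely formal: treating $q$ as transcendental, you exhibit the $\phi(d)$ distinct roots $N_1(\zeta)=1+q-\zeta-q\zeta^{-1}$ in the variable $N_1$, note that $\mathrm{Gal}(\q(q,\zeta_d)/\q(q))\cong(\z/d\z)^{\times}$ permutes them transitively, conclude that $ECyc_d$ is up to sign the minimal polynomial of any one root, and descend to $\z[q,N_1]$ by Gauss's lemma using the leading coefficient $\pm 1$. Your route buys three things: it needs no input from arithmetic geometry, since transcendence of $q$ does the work Hasse's theorem does in the paper; it is conceptually cleaner, because irreducibility of a bivariate polynomial is a statement about indeterminates, while the paper's conjugation argument implicitly evaluates at genuine curve data (the paper itself stresses elsewhere that specializations such as $N_1=0$ are ``strictly formal''); and it yields as free by-products both the degree formula $\deg_{N_1}ECyc_d=\phi(d)$ and the explicit linear factorization over $\C$, which the paper derives separately (in the proposition following this one and in the second proof of Theorem \ref{detformu}, respectively). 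The bookkeeping you flag as delicate --- degree exactly $\phi(d)$ with leading coefficient $\pm 1$ --- is indeed fine and can alternatively be read off, as the paper does, from the fact that the top power of $N_1$ comes from the terms $\alpha_1^{\phi(d)}$ and $\alpha_2^{\phi(d)}$, whose coefficients are the (unit) leading and constant coefficients of $Cyc_d$. What the paper's argument buys in exchange is brevity and a direct link to the arithmetic origin of $\alpha_1$ and $\alpha_2$.
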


\begin{proof}
Firstly we have $$\alpha_1^j + \alpha_2^j = (1 + q^j - N_j)  \in \z
$$ for all $j \geq 1$ and expanding a polynomial in $\alpha_1$ multiplied by the same polynomial in $\alpha_2$ yields terms of the form
$\alpha_1^i\alpha_2^i(\alpha_1^j+\alpha_2^j)$.  Secondly the
quantity $N_j$ is an integral polynomial in terms of $q$ and $N_1$
by Theorem \ref{Gar} and $\alpha_1^i\alpha_2^i = q^i$. Putting these
relations together, and the fact that $Cyc_d$ is an integral
polynomial itself, we obtain the desired expressions for $ECyc_d$.

Now let us assume that $ECyc_d$ is factored as $F(q,N_1)G(q,N_1)$.
The polynomial $Cyc_d(x)$ factors over the complex numbers as
$$Cyc_d(x) = \prod_{\stackrel{j=1}{\gcd(j,d)=1}}^d (1-\omega^jx)$$
where $\omega$ is a $d$th root of unity.  Thus $F(q,N_1) =
\prod_{i\in S} (1-\omega^i \alpha_1) \prod_{j\in T} (1-\omega^j
\alpha_2)$ for some nonempty subsets $S$, $T$ of elements relatively
prime to $d$.  The only way $F$ can be integral is if $F$ equals its
complex conjugate $\overline{F}$.  However, $\alpha_1$ and
$\alpha_2$ are complex conjugates by the Riemann hypothesis for
elliptic curves \cite{Hasse,Silver} (Hasse's Theorem), and thus
$F=\overline{F}$ implies that the sets $S$ and $T$ are equal.  Since
$Cyc_d(x)$ is known to be irreducible, the only possibility is $S =
T = \{j: \gcd(j,d)=1\}$, and thus $F(q,N_1) = ECyc_d$, $G(q,N_1)=1$.
\end{proof}

\begin{Rem} \label{fundsymm}
Alternatively, the integrality of the $ECyc_d$'s also follows from
the Fundamental Theorem of Symmetric Functions that states that a
symmetric polynomial with integer coefficients can be rewritten as
an integral polynomial in $e_1,e_2,\dots$.  In this case,
$Cyc_d(\alpha_1)Cyc_d(\alpha_2)$ is a symmetric polynomial in two
variables so $e_1=\alpha_1+\alpha_2=1+q-N_1$,
$e_2=\alpha_1\alpha_2=q$, and $e_k=0$ for all $k\geq 3$.  Thus we
obtain an expression for $ECyc_d$ as a polynomial in $q$ and $N_1$
with integer coefficients.
\end{Rem}

We can factor $N_k$, i.e. the $ECyc_d$'s even further, if we no
longer require our expressions to be integral.

\begin{eqnarray*}N_k &=& \prod_{j=1}^k (1-\alpha_1
\omega_k^j)(1-\alpha_2\omega_k^j) \\
\label{eigenECyc} &=& \prod_{j=1}^k (1- (\alpha_1+\alpha_2)\omega_k^j +
(\alpha_1\alpha_2)\omega_k^{2j}) \\
&=& (-1) \prod_{j=1}^k (-\omega_k^{k-j})(1- (1+q-N_1)\omega_k^j +
(q)\omega_k^{2j}) \\
&=& -\prod_{j=1}^k \bigg( (1+q-N_1) -
q\omega_k^{j} - \omega_k^{k-j}\bigg).
\end{eqnarray*}

Furthermore, the eigenvalues of a circulant matrix are well-known,
and involve roots of unity analogous to the expression precisely
given by the second equation above. (For example Loehr, Warrington,
and Wilf \cite{WWL} provide an analysis of a more general family of
three-line-circulant matrices from a combinatorial perspective.
Using their notation, our result can be stated as $$N_k =
\Phi_{k,2}(1+q-N_1,-q)$$ where $\Phi_{p,q}(x,y) = \prod_{j=1}^p (1-x
\omega^j - y \omega^{qj})$ and $\omega$ is a primitive $p$th root of
unity.  It is unclear how our combinatorial interpretation of $N_k$,
in terms of spanning trees, relates to theirs, which involves
permutation enumeration.) In particular, we prove Theorem
\ref{detformu} since $\det M_k$ equals the product of $M_k$'s
eigenvalues, which are precisely given as the $k$ factors of $-N_k$
in second equation above.

\subsection{Combinatorics of elliptic cyclotomic
polynomials}

In this subsection we further explore properties of elliptic
cyclotomic polynomials, noting that they are more than auxiliary
expressions that appear in the derivation of a proof.  To start
with, by M\"obius inversion, we can use the identity
\begin{eqnarray} \label{NktoECyc} N_k &=& \prod_{d|k}
ECyc_d(q,N_1)\end{eqnarray} to define elliptic cyclotomic polynomials directly
as

\begin{eqnarray}
\label{ECyctoNk} ECyc_k(q,N_1) &=& \prod_{d|k}
N_d^{~\mu(k/d)}\end{eqnarray} in addition to the alternative
definition \begin{eqnarray} ECyc_k(q,N_1) &=&
\prod_{\stackrel{j=1}{\gcd(j,d)=1}}^k \bigg( (1+q-N_1) -
q\omega_k^{j} - \omega_k^{k-j}\bigg).\end{eqnarray}

In particular, $ECyc_1 = N_1$ and $ECyc_p = N_p/N_1$ if $p$ is
prime.  To get a handle on $ECyc_k$ for $k$ composite, we provide
the following table for small values of $k$:

\begin{eqnarray*}
ECyc_4 &=& N_1^2 - (2+2q)N_1 + 2(1+q^2) \\
ECyc_6 &=& N_1^2 - (1+q)N_1 + (1 - q + q^2) \\
ECyc_8 &=& N_1^4 - (4+4q)N_1^3 + (6+8q+6q^2)N_1^2 -
(4+4q+4q^2+4q^3)N_1
+ 2(1+q^4) \\
ECyc_9 &=& N_1^6 - (6+6q)N_1^5 + (15+24q+15q^2)N_1^4 - (21+36q+36q^2+21q^3)N_1^3 \\
&+& (18+27q+27q^2+27q^3+18q^4)N_1^2 - (9+9q+9q^2+9q^3+9q^4+9q^5)N_1 + 3(1+q^3+q^6) \\
ECyc_{10} &=& N_1^4 - (3+3q)N_1^3 + (4+3q+4q^2)N_1^2 - (2+q+q^2+2q^3)N_1+ (1-q+q^2-q^3+q^4) \\
ECyc_{12} &=& N_1^4 - (4+4q)N_1^3 +
(5+8q+5q^2)N_1^2-(2+2q+2q^2+2q^3)N_1 + (1-q^2+q^4)
\end{eqnarray*}

We note several commonalities among these polynomials, as described in the
following propositions.  These properties are further rationale for our choice
of name for this family of polynomials.

\begin{Prop} \label{N=0,2q+2}
We have \begin{eqnarray}
ECyc_d|_{N_1=0} &=& C(d)Cyc_d(q) \\
\label{2q+2eq} ECyc_d|_{N_1=2q+2} &=& C^\prime(d)Cyc_d(-q)
\end{eqnarray} where $C(d)$ and $C^\prime(d)$ are the functions from $\z_{>0}$ to
$\z_{\geq 0}$ such that
$$C(d) =
\begin{cases}
0 \mathrm{~if~} d = 1 \\ p \mathrm{~if~} d = p^k \mathrm{~for~}p\mathrm{~prime} \\
1 \mathrm{~otherwise}
 \end{cases}$$
and
$$C^\prime(d) =
\begin{cases}
-2 \mathrm{~if~} d = 1 \\
~~0 \mathrm{~if~} d = 2 \\
~~p \mathrm{~if~} d = 2p^k \mathrm{~for~}p\mathrm{~prime~}(\mathrm{including~}2) \\
~~1 \mathrm{~otherwise}
 \end{cases}.$$
\end{Prop}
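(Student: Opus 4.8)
The plan is to evaluate the defining product $ECyc_d = Cyc_d(\alpha_1)Cyc_d(\alpha_2)$ directly at the two specializations of $N_1$. By the preceding proposition, $ECyc_d$ is a genuine polynomial in $q$ and $N_1$, obtained from this symmetric product through the relations $\alpha_1+\alpha_2 = 1+q-N_1$ and $\alpha_1\alpha_2 = q$; consequently, substituting a numerical value for $N_1$ and then evaluating is the same as factoring the specialized quadratic $T^2-(1+q-N_1)T+q$ and plugging its roots into $Cyc_d$. So the first step is simply to identify those roots.

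At $N_1 = 0$ the quadratic becomes $T^2-(1+q)T+q = (T-1)(T-q)$, so $\{\alpha_1,\alpha_2\}=\{1,q\}$ and hence $ECyc_d|_{N_1=0} = Cyc_d(1)\,Cyc_d(q)$. At $N_1 = 2q+2$ it becomes $T^2+(1+q)T+q = (T+1)(T+q)$, so $\{\alpha_1,\alpha_2\}=\{-1,-q\}$ and $ECyc_d|_{N_1=2q+2} = Cyc_d(-1)\,Cyc_d(-q)$. In each case the factor involving $q$, namely $Cyc_d(q)$ (resp. $Cyc_d(-q)$), is already exactly the one appearing on the right-hand side of the proposition. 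Thus the whole statement collapses to the two classical evaluations $Cyc_d(1) = C(d)$ and $Cyc_d(-1) = C'(d)$.

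Proving these two evaluations is the real content, and the $Cyc_d(-1)$ case is where I expect the main difficulty. For $Cyc_d(1)$ I would use the M\"obius product $Cyc_d(x) = \prod_{e\mid d}(x^e-1)^{\mu(d/e)}$ together with $x^e-1\sim e(x-1)$ as $x\to 1$: for $d>1$ the powers of $(x-1)$ cancel since $\sum_{e\mid d}\mu(d/e)=0$, leaving $\log Cyc_d(1) = -\sum_{e\mid d}\mu(d/e)\log e = \Lambda(d)$, the von Mangoldt function, which equals $\log p$ when $d=p^k$ and $0$ otherwise; with $Cyc_1(1)=0$ this reproduces $C(d)$ exactly. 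For $Cyc_d(-1)$ the same product has zero factors whenever $d$ is even, so I would split on the $2$-adic structure of $d$. When $d$ is odd every divisor is odd and $(-1)^e-1=-2$, giving $Cyc_d(-1) = (-2)^{\sum_{e\mid d}\mu(e)}$, which is $-2$ for $d=1$ and $1$ for $d>1$. When $d$ is even I would invoke the standard reductions $Cyc_{2m}(x)=Cyc_m(-x)$ for odd $m>1$ and $Cyc_{2n}(x)=Cyc_n(x^2)$ for even $n$, each of which sends $Cyc_d(-1)$ to an already-computed value $Cyc_{d'}(1)$; tracking the cases $d=2$, $d=2p^k$ (with $p$ possibly $2$), and all remaining even $d$ then yields $0$, $p$, and $1$ respectively, which is precisely $C'(d)$. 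Assembling these values for $Cyc_d(\pm1)$ into the two products completes the proof.
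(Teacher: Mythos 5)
Your proposal is correct, and its opening move is exactly the paper's: specialize $N_1$, factor the quadratic as $(T-1)(T-q)$ at $N_1=0$ and as $(T+1)(T+q)$ at $N_1=2q+2$, observe the substitution is purely formal, and thereby reduce the proposition to the classical evaluations $Cyc_d(1)=C(d)$ and $Cyc_d(-1)=C'(d)$. Where you genuinely diverge is in how those evaluations are established. The paper proves $Cyc_d(1)=C(d)$ by elementary telescoping: plugging $q=1$ into $(1-q^k)/(1-q)=\prod_{d\mid k,\,d\neq 1}Cyc_d(q)$ gives $Cyc_{p^k}(1)=p$ by induction and then forces the factors indexed by $d$ with two or more prime divisors to equal $1$; for $Cyc_d(-1)=C'(d)$ it offers no proof at all, merely citing the observation recorded in the OEIS \cite{intseq}. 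You instead work from the M\"obius product $Cyc_d(x)=\prod_{e\mid d}(x^e-1)^{\mu(d/e)}$, obtaining $Cyc_d(1)=\exp\Lambda(d)$ for $d>1$ via the von Mangoldt function, and handling $x=-1$ by the reductions $Cyc_{2m}(x)=Cyc_m(-x)$ ($m>1$ odd) and $Cyc_{2n}(x)=Cyc_n(x^2)$ ($n$ even), which convert every even case into an already-computed value at $x=1$; your case check ($d=2$ gives $0$, $d=2p^k$ gives $p$ including $p=2$, all other even $d$ give $1$, odd $d>1$ gives $(-2)^{\sum_{e\mid d}\mu(e)}=1$) is accurate. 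So your route buys a self-contained proof of precisely the half that the paper outsources to a citation, at the price of invoking M\"obius inversion where the paper uses only bare induction. Two small repairs: your displayed identity should read $\log Cyc_d(1)=\sum_{e\mid d}\mu(d/e)\log e=\Lambda(d)$ (equivalently $-\sum_{e\mid d}\mu(e)\log e$); as written your two sign slips cancel, so the conclusion stands. Also, your monic normalization $Cyc_1(x)=x-1$ is not merely permissible but necessary: under the paper's stated convention $1-x^k=\prod_{d\mid k}Cyc_d(x)$, which forces $Cyc_1(x)=1-x$, the $d=1$ case of the second identity would assert $2q+2=(-2)(1+q)$, which is false, whereas with $Cyc_1(-q)=-q-1$ it checks out as $(-2)(-q-1)=2q+2$.
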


\begin{proof}
In the case that $N_1=0$, the characteristic quadratic equation
factors as $$1-(1+q-N_1)T + qT^2 = (1-T)(1-qT).$$  Consequently,
$\alpha_1=1$ and $\alpha_2=q$ in this special case.  (Note this is
strictly formal since $N_1=0$ is impossible, and thus it is not
contradictory that the Riemann Hypothesis fails.)  Nonetheless, we
still have $ECyc_d = Cyc_d(\alpha_1)Cyc_d(\alpha_2)$, and
consequently,
$$ECyc_d|_{N_1=0} = Cyc_d(1)Cyc_d(q).$$ Finally the value of
$Cyc_d(1)$ equals the function defined as $C(d)$ above \cite[Seq.
A020500]{intseq}.

For the reader's convenience we also provide a simple proof of this equality.
It is clear that $Cyc_1(q)=1-q$ and $Cyc_p(q)=1+q+q^2+\dots + q^{p-1}$ so by
induction on $k \geq 1$, assume that $Cyc_{p^k}(1)=p$.
$${1-q^{p^k} \over 1 -q} = 1 + q + q^2 + \dots + q^{p^k-1} = \prod_{j=1}^k
Cyc_{p^j}(q).$$  Plugging in $q=1$, and by induction we get $p^k =
p^{k-1} \cdot Cyc_{p^k}(1)$, thus we have $Cyc_{p^k}(1) =p$.  We now
proceed to show $Cyc_{d}(1) = 1$ if $d = p_1^{k_1}p_2^{k_2} \cdots
p_r^{k_r}$ for any $r\geq 2$.  For this we use $k$ such that $d|k$.
We assume $k = p_1^{k_1'}p_2^{k_2'} \cdots p_r^{k_r'}$.

\begin{eqnarray*}
{1-q^{k} \over 1 -q} &=& 1 + q + q^2 + \dots + q^{k-1}
\\ &=&
\bigg(\prod_{j_1=1}^{k_1'}
Cyc_{p_1^{j_1}}(q)\bigg)\bigg(\prod_{j_2=1}^{k_2'}
Cyc_{p_2^{j_2}}(q)\bigg)\cdots \bigg(\prod_{j_r=1}^{k_r'}
Cyc_{p_r^{j_r}}(q)\bigg)\\&\times&
\bigg(\prod_{d\mathrm{~is~another~divisor~of~}k}
Cyc_{d}(q)\bigg).\end{eqnarray*} The expression ${1-q^{k} \over 1
-q}\bigg|_{q=1}$ equals $k$, and the first $r$ products on the
right-hand-side equal $p_1^{k_1'},~p_2^{k_2'},~ \dots, ~p_r^{k_r'}$
respectively. Thus the last set of factors, i.e. the cyclotomic
polynomials of $d$ with two or more prime factors, must all equal
the value $1$.

We prove (\ref{2q+2eq}) analogously.  When $N_1=2q+2$ (again this is
strictly formal), the characteristic equation factors as
$$1-(1+q-N_1)T + qT^2 = (1+T)(1+qT)$$ implying $\alpha_1=-1$ and
$\alpha_2=-q$. Additionally, $C^\prime(d) = Cyc_d(-1)$ was observed
by Ola Veshta on Jun 01 2001, as cited on \cite[Seq.
A020513]{intseq}.
\end{proof}

\begin{Prop}
For $d\geq 2$, $$\deg_{N_1} ECyc_d = \deg_q ECyc_d = \phi(d),$$
where the Euler
$\phi$ function which counts the number of integers between $1$ and
$d-1$ which are relatively prime to $d$.
\end{Prop}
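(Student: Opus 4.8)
The plan is to read both degrees directly off the product formula for $ECyc_d$ displayed just above, namely
\[
ECyc_d(q,N_1) = \prod_{\stackrel{j=1}{\gcd(j,d)=1}}^{d} \bigg( (1+q-N_1) - q\,\omega_d^{\,j} - \omega_d^{\,d-j} \bigg),
\]
a product of exactly $\phi(d)$ factors. The whole argument rests on the observation that each individual factor is affine-linear in \emph{both} variables: regarded as a polynomial in $N_1$ it is $-N_1 + (1+q-q\,\omega_d^{\,j}-\omega_d^{\,d-j})$, with $N_1$-leading coefficient $-1$, while regarded as a polynomial in $q$ it is $(1-\omega_d^{\,j})\,q + (1-N_1-\omega_d^{\,d-j})$, with $q$-leading coefficient $1-\omega_d^{\,j}$. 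Since degrees add under multiplication over the integral domains $\C[q]$ and $\C[N_1]$ at hand, the degree of the product in each variable equals $\phi(d)$ precisely when the corresponding product of per-factor leading coefficients does not vanish.

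First I would dispose of the $N_1$-degree, which requires no hypothesis. The coefficient of $N_1^{\phi(d)}$ is the product of the per-factor leading coefficients, namely the nonzero constant $(-1)^{\phi(d)}$, so $\deg_{N_1}ECyc_d=\phi(d)$ for every $d$.

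The $q$-degree is the only substantive point, and it is exactly here that the hypothesis $d\geq 2$ is needed. The coefficient of $q^{\phi(d)}$ is $\prod_{\gcd(j,d)=1}(1-\omega_d^{\,j})$, which under the convention $Cyc_d(x)=\prod_{\gcd(j,d)=1}(1-\omega_d^{\,j}x)$ used above is precisely $Cyc_d(1)$; by the preceding proposition this equals $C(d)$. A factor fails to contribute to the $q$-degree exactly when $1-\omega_d^{\,j}=0$, i.e. $d\mid j$, which for a primitive index $\gcd(j,d)=1$ forces $d=1$. Hence for $d\geq 2$ every factor is genuinely linear in $q$, no leading coefficient vanishes, and indeed $Cyc_d(1)=C(d)\in\{1,p\}$ is nonzero, giving $\deg_q ECyc_d=\phi(d)$. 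The degenerate case $d=1$, excluded by the hypothesis, is consistent since $ECyc_1=N_1$ has $q$-degree $0$. I expect the entire proof to be this short; the one place demanding care is confirming that the top $q$-coefficient is the already-evaluated nonzero constant $C(d)$, so that no hidden cancellation can lower the degree, and this is supplied by the factorization of $Cyc_d$ together with the earlier computation of $Cyc_d(1)$.
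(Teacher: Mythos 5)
Your proof is correct, but it takes a genuinely different route from the paper's. The paper never touches the complex factorization in its argument: it invokes Remark~\ref{fundsymm} to write $ECyc_d$ as an integral polynomial in $e_1=\alpha_1+\alpha_2=1+q-N_1$ and $e_2=\alpha_1\alpha_2=q$, identifies $\deg_{N_1}ECyc_d$ with $\deg_{\alpha_1}Cyc_d(\alpha_1)=\phi(d)$, obtains only the upper bound $\deg_q ECyc_d\le\phi(d)$ from the powers of $\alpha_1\alpha_2$, and then forces equality by specializing to $N_1=0$, where Proposition~\ref{N=0,2q+2} gives the constant term $C(d)\,Cyc_d(q)$, whose $q$-degree is exactly $\phi(d)$ once $d\ge 2$. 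You instead read both degrees off the product of $\phi(d)$ factors that are affine-linear in each variable: degrees add over the integral domain $\C[q,N_1]$, and the leading coefficients are $(-1)^{\phi(d)}$ in $N_1$ and $\prod_{\gcd(j,d)=1}\bigl(1-\omega_d^{\,j}\bigr)=Cyc_d(1)=C(d)$ in $q$. Your route is arguably more self-contained: for the $q$-degree it needs only that $1-\omega_d^{\,j}\neq 0$ for each $j$ with $\gcd(j,d)=1$ when $d\ge 2$, so the evaluation $Cyc_d(1)=C(d)$ serves as a pleasant cross-check rather than a logical necessity, and as a bonus you produce the leading coefficients of $ECyc_d$ in both variables. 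What the paper's route buys is that it stays entirely within $\z[q,N_1]$ and symmetric-function territory, reusing the specialization machinery of Proposition~\ref{N=0,2q+2} instead of the eigenvalue product; both arguments rest only on formulas established earlier in the paper (the product formula for you, the $N_1=0$ specialization for the paper), so neither is circular.
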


\begin{proof}
As noted in Remark \ref{fundsymm}, we can write $ECyc_d$ as an
integral polynomial in $e_1 = \alpha_1 +\alpha_2 = 1+q-N_1$ and
$e_2=\alpha_1\alpha_2=q$. The highest degree of $N_1$ in $ECyc_d$ is
therefore equal to the highest degree of $e_1=\alpha_1+\alpha_2$,
which is the same as the largest $m$ such that
$\alpha_1^m\alpha_2^0$ (resp. $\alpha_1^0\alpha_2^m)$ is a term in
$Cyc_d(\alpha_1)Cyc_d(\alpha_2)$.  Thus $\deg_{N_1} ECyc_d(q,N_1) =
\deg_{\alpha_1} Cyc_d(\alpha_1) = \phi(d)$.
Analogously, the degree of $q$ comes from the highest power of $(\alpha_1\alpha_2)^m$ in
$Cyc_d(\alpha_1)Cyc_d(\alpha_2)$.  Thus we have shown
$$\deg_q ECyc_d \leq \phi(d).$$  Equality follows from the first half of
Proposition \ref{N=0,2q+2} when $d\geq 2$ since the constant term with respect to $N_1$, which equals
$C(d)Cyc_d(q)$, has degree $\phi(d)$.
\end{proof}

Finally, if one examines the expressions for $ECyc_d(q,N_1)$, one
notes that they appear alternating in sign just as the polynomials
for $N_k$, except for the constant term which equals $C(d)Cyc_d(q)$
by Proposition \ref{N=0,2q+2}.  More precisely, the author finds the
following empirical evidence for such a claim.

\begin{Prop} \label{ellalt} For $d$ between $2$ and  $104$, we obtain
$$ECyc_d(q,N_1) = Cyc_d(1)\cdot Cyc_d(q) + \sum_{i=1}^{\phi(d)}
(-1)^{i} Q_{i,d}(q)N_1^i$$ where $Q_{i,d}$ is a univariate
polynomial with \emph{positive} integer coefficients.
\end{Prop}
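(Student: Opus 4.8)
The plan is to separate the $N_1^0$-coefficient from the rest and treat them by different means. First I would observe that Proposition \ref{N=0,2q+2} already pins down the constant term: the $N_1^0$-coefficient of $ECyc_d$ equals $C(d)Cyc_d(q) = Cyc_d(1)Cyc_d(q)$. Hence the only real content of the statement is the sign pattern $(-1)^i$ on the coefficient of $N_1^i$ for $1 \le i \le \phi(d)$, together with the $q$-positivity of each $Q_{i,d}(q)$. This is exactly the part excluded from the constant term, which is needed precisely because $Cyc_d(q)$ may carry negative $q$-coefficients (e.g. $Cyc_6(q)=1-q+q^2$), whereas the author observes the higher terms do not.

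The cleanest reformulation uses the substitution $N_1 \mapsto -t$ already exploited in Theorem \ref{whser}. Writing $V_d(q,t) := ECyc_d(q,-t)$, the claim is equivalent to asserting that $V_d(q,t) - C(d)Cyc_d(q)$ has nonnegative coefficients in $q$ and $t$, with each positive power of $t$ carrying a $q$-polynomial with strictly positive coefficients. The advantage of this form is that the $V_e$'s multiply to a manifestly positive object: by (\ref{NktoECyc}) and $N_d(q,-t) = -\mathcal{W}_d(q,t)$, and since $V_1(q,t) = -t$, one gets $\mathcal{W}_d(q,t) = t\prod_{e\mid d,\,e>1}V_e(q,t)$, so $\mathcal{W}_d(q,t)/t$ has nonnegative coefficients by the spanning-tree interpretation of Section \ref{whnums}. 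The $t^0$-part of this product, namely $\prod_{e\mid d,\,e>1}C(e)Cyc_e(q)$, then matches the single-spoke spanning trees, a consistency check that also verifies the constant-term formula combinatorially.

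The prime case is immediate and serves as the base of an induction. For $d=p$ one has $ECyc_p = N_p/N_1$, so Theorem \ref{Gar} gives $ECyc_p = \sum_{j=0}^{p-1}(-1)^j P_{j+1,p}(q)N_1^j$ with each $P_{j+1,p}$ being $q$-positive, which is exactly the asserted shape (here even the constant term $Cyc_p(q)=1+q+\dots+q^{p-1}$ is $q$-positive). For composite $d$ I would attempt to peel off the largest divisor via $N_d = ECyc_d\cdot\prod_{e\mid d,\,e<d}ECyc_e$, combining the Garsia shape of $N_d$ with the inductively known shapes of the $ECyc_e$ for $e<d$ to control $ECyc_d$.

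The hard part -- and the reason the statement is phrased as empirical and capped at $d=104$ -- is that isolating $ECyc_d$ amounts to the M\"obius inversion (\ref{ECyctoNk}), a genuine quotient of the $N_e$'s in which the exponents $\mu(d/e)$ are negative on the squarefree non-unit divisors; the peeling argument above is just this division in disguise. Coefficientwise sign-alternation and $q$-positivity are not preserved under polynomial division, and there is no evident signed combinatorial model enumerating the individual $ECyc_d$ analogous to the spanning-tree model for $N_k$, so the cancellations produced by the division cannot obviously be controlled. Lacking such control in general, I would complete the argument in the stated range by computing each $ECyc_d$ for $2\le d\le 104$ directly from (\ref{ECyctoNk}) (equivalently, from the eigenvalue product $\prod_{\gcd(j,d)=1}((1+q-N_1)-q\omega_d^{j}-\omega_d^{d-j})$) and verifying the sign pattern and the positivity of the $Q_{i,d}(q)$ term by term.
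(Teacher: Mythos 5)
Your proposal is correct and takes essentially the same approach as the paper: the paper offers no structural proof of this proposition at all---it is stated explicitly as empirical evidence, i.e.\ verified by direct computation of $ECyc_d$ for $2 \le d \le 104$ (and observed to fail at $d=105$), which is exactly the fallback your argument ends with. Your preliminary reductions---the constant term via Proposition \ref{N=0,2q+2}, the prime case via Theorem \ref{Gar}, and the observation that the M\"obius-inversion quotient in (\ref{ECyctoNk}) blocks any inductive positivity argument (as it must, given the $d=105$ counterexample)---are all sound and correctly explain why term-by-term verification is the only available route.
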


However, the conjecture fails for $d=105$.  In particular if we
write
\begin{eqnarray*} ECyc_{105}(q,N_1) &=& Cyc_{105}(1)\cdot Cyc_{105}(q) +
\sum_{i=1}^{48} (-1)^{i} Q_{i,105}(q)N_1^i
\end{eqnarray*} where the $Q_{i,105}(q)$'s are univariate polynomials with integer coefficients, then
$Q_{2,105}(q)$ through $Q_{48,105}(q)$ indeed have \emph{positive}
integer coefficients as expected.  However the first univariate
polynomial, i.e. the coefficient of $-N_1$ is
\begin{eqnarray*} Q_{1,105}(q) &=& 24q^{47}+47q^{46}+69q^{45}+69q^{44}+69q^{43}+50q^{42}+32q^{41}
\\
&-& 2q^{40}-18q^{39}-33q^{38}-33q^{37}-33q^{36}-21q^{35}-10q^{34}
\\ &+&
9q^{32}+17q^{31}+24q^{30}+24q^{29}+24q^{28}+20q^{27}+20q^{26}+18q^{25}+18q^{24}
\\ &+&
18q^{23}+18q^{22}+20q^{21}+20q^{20}+24q^{19}+24q^{18}+24q^{17}+17q^{16}+9q^{15}
\\ &-& 
10q^{13}-21q^{12}-33q^{11}-33q^{10}-33q^{9}-18q^{8}-2q^{7}
\\ &+& 32q^6+50q^5+69q^4+69q^3+69q^2+47q+24.
\end{eqnarray*}
Note that there are $46$ nonzero coefficients of $Q_{1,105}$ in the
expansion of $ECyc_{105}(q,N_1)$, $14$ of which have the incorrect
sign.

The number $105 = 3\cdot 5 \cdot 7$ is significant and interesting
from a number theoretic point of view.  This number is also the
first $d$ such that ordinary cyclotomic polynomial $Cyc_d$ has a
coefficient other than $-1,0,$ or $1$.

\vspace{-0.7em}
\begin{eqnarray*} Cyc_{105} &=& 1 + x + x^2 - x^5 - x^6 - 2x^7 - x^8 - x^9 + x^{12} + x^{13} + x^{14}
\\&+& x^{15} + x^{16} + x^{17} - x^{20} - x^{22}
-x^{24} - x^{26} - x^{28} + x^{31} + x^{32} \\
&+& x^{33} + x^{34} + x^{35} + x^{36} - x^{39} - x^{40} - 2x^{41} -
x^{42} - x^{43} \\&+& x^{46} + x^{47} + x^{48}.
\end{eqnarray*}

Despite this counter-example, we still can prove that the
coefficients of the $ECyc_d$'s alternate in sign for an infinite
number of $d$'s.  Specifically, we note that $ECyc_{2^m}$ resemble
the coefficients of $N_{2^{m-1}}$, and moreover the pattern we find
is given by the following proposition.
\begin{Prop} \label{ECyc2m}
\begin{eqnarray} \label{QP2ell} ECyc_{2^m} = 2Cyc_{2^{m-1}}(q) - N_{2^{m-1}}.\end{eqnarray} In
particular, for $i$ between $1$ and $\phi(2^m)=2^{m-1}$, we get
\begin{eqnarray} \label{QP2} Q_{i,2^{m}} =
P_{i,2^{m-1}}\end{eqnarray} where the $P_{i,k}$ are the coefficients
of $N_k$.
\end{Prop}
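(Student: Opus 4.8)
The plan is to prove equation (\ref{QP2ell}) and then read off (\ref{QP2}) as an immediate corollary. I would begin from the M\"obius-inversion definition (\ref{ECyctoNk}), which gives $ECyc_{2^m} = \prod_{d \mid 2^m} N_d^{\,\mu(2^m/d)}$. Since the only divisors of $2^m$ are the powers $2^0, 2^1, \dots, 2^m$, and $\mu(2^m/d)$ vanishes unless $2^m/d \in \{1,2\}$, the product collapses to $ECyc_{2^m} = N_{2^m}/N_{2^{m-1}}$. So the real content is the identity $N_{2^m}/N_{2^{m-1}} = 2Cyc_{2^{m-1}}(q) - N_{2^{m-1}}$, equivalently
\begin{eqnarray}
\label{keysquare}
N_{2^m} = N_{2^{m-1}}\bigl(2Cyc_{2^{m-1}}(q) - N_{2^{m-1}}\bigr).
\end{eqnarray}

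The natural route to (\ref{keysquare}) is through the factorization $N_k = (1-\alpha_1^k)(1-\alpha_2^k)$ already established in the Second proof of Theorem \ref{detformu}. Setting $k = 2^m = 2\cdot 2^{m-1}$ and writing $\beta_i = \alpha_i^{2^{m-1}}$, I have $N_{2^m} = (1-\beta_1^2)(1-\beta_2^2) = (1-\beta_1)(1-\beta_2)(1+\beta_1)(1+\beta_2)$. The first pair is exactly $N_{2^{m-1}}$, so $N_{2^m} = N_{2^{m-1}}\cdot(1+\beta_1)(1+\beta_2)$, and it remains to identify $(1+\beta_1)(1+\beta_2) = 2Cyc_{2^{m-1}}(q) - N_{2^{m-1}}$. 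Expanding, $(1+\beta_1)(1+\beta_2) = 1 + (\beta_1+\beta_2) + \beta_1\beta_2 = 2 + (\alpha_1^{2^{m-1}} + \alpha_2^{2^{m-1}}) - N_{2^{m-1}} - 1 + q^{2^{m-1}}$ after using $\beta_1\beta_2 = q^{2^{m-1}}$ and the defining relation $\alpha_1^{2^{m-1}}+\alpha_2^{2^{m-1}} = 1 + q^{2^{m-1}} - N_{2^{m-1}}$. This simplifies to $2 + 2q^{2^{m-1}} - N_{2^{m-1}}$, so I need $2Cyc_{2^{m-1}}(q) = 2 + 2q^{2^{m-1}}$, i.e. $Cyc_{2^{m-1}}(q) = 1 + q^{2^{m-1}}$ for $m \geq 2$. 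This last fact is the standard evaluation $Cyc_{2^j}(q) = 1 + q^{2^{j-1}}$ for $j\geq 1$, which follows from $q^{2^j}-1 = (q^{2^{j-1}}-1)(q^{2^{j-1}}+1)$ and induction; one should handle the boundary $m=1$ (where $2^{m-1}=1$, $Cyc_1(q)=1-q$) separately, though the displayed claim is really intended for $m\geq 2$.

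Having (\ref{keysquare}), equation (\ref{QP2ell}) is immediate. For the coefficient statement (\ref{QP2}) I would match powers of $N_1$. By Theorem \ref{Gar}, $N_{2^{m-1}} = \sum_{i=1}^{2^{m-1}} (-1)^{i-1} P_{i,2^{m-1}}(q) N_1^i$, a polynomial in $N_1$ with zero constant term. The term $2Cyc_{2^{m-1}}(q) = 2(1+q^{2^{m-1}})$ is constant in $N_1$, so $ECyc_{2^m} = 2Cyc_{2^{m-1}}(q) - N_{2^{m-1}}$ has the same $N_1^i$ coefficients as $-N_{2^{m-1}}$ for every $i\geq 1$. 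Writing $ECyc_{2^m} = C(2^m)Cyc_{2^m}(q) + \sum_{i\geq 1}(-1)^i Q_{i,2^m}(q)N_1^i$ in the normalization of Proposition \ref{ellalt}, the coefficient of $N_1^i$ gives $(-1)^i Q_{i,2^m} = -(-1)^{i-1}P_{i,2^{m-1}} = (-1)^i P_{i,2^{m-1}}$, hence $Q_{i,2^m} = P_{i,2^{m-1}}$ as claimed, which in particular shows these coefficients are positive.

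The only genuinely delicate point is bookkeeping rather than a conceptual obstacle: I must be careful that the M\"obius product (\ref{ECyctoNk}) truly collapses as claimed (which relies on $\mu$ being supported on squarefree arguments and $2^m$ having only $2$-power divisors), and I must confirm the constant-in-$N_1$ part $2Cyc_{2^{m-1}}(q)$ matches the normalized constant term $C(2^m)Cyc_{2^m}(q)$ of Proposition \ref{ellalt} for consistency. Since $C(2^m)=2$ for $m\geq 1$ and $Cyc_{2^m}(q)=1+q^{2^{m-1}}$, one checks $C(2^m)Cyc_{2^m}(q) = 2(1+q^{2^{m-1}}) = 2Cyc_{2^{m-1}}(q)$, so the two descriptions of the constant term agree and everything is consistent. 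I do not expect any real difficulty beyond this verification.
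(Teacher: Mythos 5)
Your argument is correct in substance and, after the common first step, diverges from the paper only in how the key identity is established. Both you and the paper collapse the M\"obius product (\ref{ECyctoNk}) to $ECyc_{2^m}=N_{2^m}/N_{2^{m-1}}$, reducing everything to $N_{2^m}=\bigl(2+2q^{2^{m-1}}\bigr)N_{2^{m-1}}-N_{2^{m-1}}^2$. The paper obtains this in one line from the tower property $N_{mk}(q,N_1)=N_m\bigl(q^k,N_k\bigr)$ (the curve over $\f_{q^{2^m}}$ is the quadratic extension of the same curve viewed over $\f_{q^{2^{m-1}}}$), applied to $N_2(q,N_1)=(2+2q)N_1-N_1^2$ with $q^{2^{m-1}}$ substituted for $q$ and $N_{2^{m-1}}$ for $N_1$. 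You instead factor $N_{2^m}=(1-\beta_1^2)(1-\beta_2^2)$ with $\beta_i=\alpha_i^{2^{m-1}}$ and compute $(1+\beta_1)(1+\beta_2)=2+2q^{2^{m-1}}-N_{2^{m-1}}$ from $\beta_1\beta_2=q^{2^{m-1}}$; this is the same algebra carried out by hand on the Frobenius eigenvalues, using only the factorization of $N_k$ from the second proof of Theorem \ref{detformu}. The paper's route has the advantage of isolating the composition principle that it goes on to exploit (the relation $\mathcal{W}_{mk}(q,t)=\mathcal{W}_m(q^k,\mathcal{W}_k(q,t))$ discussed right after the proposition), while yours is self-contained; your derivation of (\ref{QP2}) by matching coefficients against the normalization of Proposition \ref{ellalt} is also spelled out more carefully than the paper's one-line assertion.

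One labeling error needs repair, which you inherited from the statement itself (and which the paper's own proof commits as well): you claim $Cyc_{2^{m-1}}(q)=1+q^{2^{m-1}}$ and justify it by the evaluation $Cyc_{2^j}(q)=1+q^{2^{j-1}}$, but at $j=m-1$ that evaluation gives $1+q^{2^{m-2}}$, contradicting your claim. What your computation actually proves is $ECyc_{2^m}=2\bigl(1+q^{2^{m-1}}\bigr)-N_{2^{m-1}}=2\,Cyc_{2^m}(q)-N_{2^{m-1}}$; the subscript $2^{m-1}$ in (\ref{QP2ell}) is an off-by-one misprint for $2^m$. Your own consistency check $C(2^m)Cyc_{2^m}(q)=2(1+q^{2^{m-1}})$ says exactly this, and the corrected subscript is confirmed by the paper's table: the constant term of $ECyc_8$ is $2(1+q^4)=2\,Cyc_8(q)$, not $2\,Cyc_4(q)=2(1+q^2)$. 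With the subscript fixed, no separate treatment of $m=1$ is needed either, since $ECyc_2=N_2/N_1=2+2q-N_1=2\,Cyc_2(q)-N_1$. So delete the final identification ``$=2Cyc_{2^{m-1}}(q)$'' and state the result with $Cyc_{2^m}$; everything else in your proof stands.
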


Note that in our proof we use the fact that $ECyc_d$ can be written
as
$$Cyc_d(1)\cdot Cyc_d(q) + \sum_{i=1}^{\phi(d)} (-1)^{i}
Q_{i,d}(q)N_1^i$$ where the $Q_{i,d}$'s are univariate polynomials
with \emph{possibly} negative coefficients.  Therefore, our proof of
Proposition \ref{ECyc2m} actually extends Proposition \ref{ellalt}
to the case where $d$ is a power of $2$ since we previously showed
that the $P_{i,d}$'s alternate.

\begin{proof}
We note that $Cyc_{2^{m-1}} = 1 + q^{2^{m-1}}$ and that (\ref{QP2})
follows from (\ref{QP2ell}).  Also, $ECyc_{2^m} =
N_{2^m}/N_{2^{m-1}}$ and thus it suffices to prove
\begin{eqnarray*}N_{2^m} = (2+ 2q^{2^{m-1}})N_{2^{m-1}} - {N_{2^{m-1}}^2}.\end{eqnarray*}
However, this is a special case of $$N_2(q,N_1) = (2 + 2q)N_1(q,N_1)
-N_1(q,N_1)^2$$ where we plug in $q^{2^{m-1}}$ in the place of $q$.
\end{proof}

Unfortunately, formulas for $Q_{i,d}$'s in terms of $P_{i,k}$'s when
$d$ is not a power of $2$ are not as simple.  On the other hand, the
last part of this proof highlights a principle that has the
potential to open up a new direction.  Namely, $N_k(q,N_1)$ is
defined as the number of points on $C(\f_{q^k})$ where $q$ itself
can also be a power of $p$. Consequently, \begin{eqnarray}N_{m\cdot
k}(q, ~N_1) = \# C(\f_{q^{m\cdot k}}) = N_{m}\bigg(q^{k},
~N_{k}\bigg).\end{eqnarray} While this relation is immediate given
our definition of $N_k = \#C(\f_{q^k})$, when we translate this
relation in terms of spanning trees, the relation
\begin{eqnarray} \label{WmkRel} \mathcal{W}_{mk}(q,t) =
\mathcal{W}_m\bigg(q^k,\mathcal{W}_k(q,t)\bigg)\end{eqnarray} seems
much more novel. Furthermore, in this case, this relation involves
only positive integer coefficients and thus motivates exploration
for a bijective proof.  As noted in Section \ref{chebsec}, such a
compositional formula is indicative of the appearance of a linear
transformation of $x^k$ or $T_k(x)$, which is also clear from the
three-term recurrence satisfied by the $(1+q^k-N_k)$'s.

\subsection{Geometric interpretation of elliptic cyclotomic
polynomials}

Despite the fact that the above expressions of elliptic cyclotomic
polynomials do not have positive coefficients nor coefficients with
alternating signs, we can nonetheless describe a set of geometric
objects which the elliptic cyclotomic polynomials enumerate.

\begin{Thm} \label{geomEcyc} We have
$$ECyc_d = \bigg|Ker\bigg(Cyc_d(\pi)\bigg): ~ C(\overline{\f_q}) \rightarrow C(\overline{\f_q}) \bigg|$$ where
$\pi$ denotes the Frobenius map, and $Cyc_d(\pi)$ is an element of
$End(C)=End(C(\overline{\f_q}))$.
\end{Thm}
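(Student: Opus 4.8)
The plan is to realize the quantity $N_k=\#C(\f_{q^k})$ as the cardinality of the kernel of an explicit endomorphism and then to peel off the factor corresponding to each divisor $d$ of $k$ using the cyclotomic factorization. Write $\pi\in\mathrm{End}(C)$ for the $q$-power Frobenius. The starting point is the classical fact that $C(\f_{q^k})$ is precisely the set of points fixed by $\pi^k$, so that $C(\f_{q^k})=\mathrm{Ker}(\pi^k-1)$, and moreover $\pi^k-1$ is a separable isogeny whose degree equals its number of kernel points; hence $N_k=\deg(\pi^k-1)=|\mathrm{Ker}(\pi^k-1)|$. I would also recall that $\pi$ acts on a Tate module $T_\ell C$ (any $\ell\neq p$) with characteristic polynomial $T^2-(1+q-N_1)T+q=(T-\alpha_1)(T-\alpha_2)$, so that $\alpha_1,\alpha_2$ are exactly the Frobenius eigenvalues appearing throughout the paper.

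Next I would pass from $\pi^k-1$ to its cyclotomic pieces. Applying the ring homomorphism $\z[x]\to\mathrm{End}(C)$, $x\mapsto\pi$, to the identity $1-x^k=\prod_{d\mid k}Cyc_d(x)$ (which holds in $\z[x]$) yields $1-\pi^k=\prod_{d\mid k}Cyc_d(\pi)$ inside the commutative subring $\z[\pi]$, so the factors commute and the product is unambiguous even in the supersingular case. For a fixed $d$ I would then compute the degree of the single endomorphism $Cyc_d(\pi)$ directly from its action on $T_\ell C$: since $Cyc_d(\pi)$ acts with eigenvalues $Cyc_d(\alpha_1)$ and $Cyc_d(\alpha_2)$, and the degree of an isogeny equals the determinant of its action on the Tate module, we get $\deg Cyc_d(\pi)=Cyc_d(\alpha_1)Cyc_d(\alpha_2)$, which is exactly $ECyc_d$ by definition. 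Because $|\alpha_i|=\sqrt q>1$, neither $\alpha_i$ is a root of unity, so $Cyc_d(\alpha_i)\neq 0$ and $Cyc_d(\pi)$ is a genuine nonzero isogeny with finite kernel.

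It remains to convert this degree into a kernel count, i.e. to establish that $Cyc_d(\pi)$ is separable, which is the one point deserving care. I would argue this in two equivalent ways. The clean global route: the inseparable degree is multiplicative along products of isogenies, and $\prod_{d\mid k}Cyc_d(\pi)=1-\pi^k$ is separable; since each factor has inseparable degree at least $1$, every $Cyc_d(\pi)$ must be separable. Alternatively, and more explicitly, the map $\phi\mapsto c_\phi$ sending an endomorphism to its scalar action on the invariant differential is a ring homomorphism $\mathrm{End}(C)\to\overline{\f_q}$ with $c_\pi=0$ and $c_{[a]}=a\bmod p$; evaluating on $Cyc_d(\pi)=\sum a_i\pi^i$ annihilates every term except the constant one, giving $c_{Cyc_d(\pi)}=Cyc_d(0)\bmod p=1\neq 0$ (note $Cyc_d(0)=1$ in the normalization $1-x^k=\prod_{d\mid k}Cyc_d(x)$), whence separability. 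With separability in hand, $|\mathrm{Ker}\,Cyc_d(\pi)|=\deg Cyc_d(\pi)=ECyc_d$, proving the theorem. The main obstacle is precisely this separability bookkeeping together with keeping the eigenvalue/degree computation uniform across the ordinary and supersingular cases; the $\ell$-adic formulation handles the latter since $\pi$ has characteristic polynomial $T^2-(1+q-N_1)T+q$ regardless, and $\z[\pi]$ is commutative even when the full endomorphism ring is not.
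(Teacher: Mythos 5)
Your proof is correct, but it isolates each factor $Cyc_d(\pi)$ by a genuinely different mechanism than the paper. The paper's argument never computes a degree: it factors $1-\pi^k=\prod_{d\mid k}Cyc_d(\pi)$ in $\mathrm{End}(C)$, observes that kernel cardinalities multiply under composition of isogenies, so that $N_k=\big|\mathrm{Ker}(1-\pi^k)\big|=\prod_{d\mid k}\big|\mathrm{Ker}\,Cyc_d(\pi)\big|$, and then compares this against the previously established algebraic factorization $N_k=\prod_{d\mid k}ECyc_d$ for all $k$ simultaneously; induction over divisors (equivalently, M\"obius inversion) then forces $ECyc_d=\big|\mathrm{Ker}\,Cyc_d(\pi)\big|$ term by term. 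Your route is local in $d$: you evaluate $\big|\mathrm{Ker}\,Cyc_d(\pi)\big|$ directly as $\deg Cyc_d(\pi)$, computing that degree as the determinant of the action on a Tate module (which gives $Cyc_d(\alpha_1)Cyc_d(\alpha_2)=ECyc_d$) and justifying the passage from degree to kernel count by proving separability, either via multiplicativity of inseparable degree inside the separable map $1-\pi^k$ or via the invariant-differential homomorphism together with $Cyc_d(0)=1$ in the paper's normalization. What your approach buys: it needs no induction over $k$ and no prior appeal to the identity $N_k=\prod_{d\mid k}ECyc_d$ (it reproves it), it works uniformly in the ordinary and supersingular cases, and it makes explicit two points the paper leaves silent --- that each $Cyc_d(\pi)$ is a nonzero isogeny with finite kernel (from $|\alpha_i|=\sqrt q$, so $Cyc_d(\alpha_i)\neq 0$), and that separability is exactly what converts degree into a point count. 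What the paper's approach buys: it is essentially elementary --- no Tate modules and no separability theory beyond the fact $C(\f_{q^k})=\mathrm{Ker}(1-\pi^k)$ --- because multiplicativity of kernel counts sidesteps the degree computation entirely.
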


\begin{proof}
One of the key properties of the Frobenius map is the fact that
$C(\f_{q^k}) = Ker(1-\pi^k)$, where $1-\pi^k$ is an element of
$End(C)$.  See \cite{Silver} for example.  The map $(1-\pi^k)$
factors into cyclotomic polynomials in $End(C)$ since the
endomorphism ring contains both integers and powers of $\pi$.
Since the maps $Cyc_d(\pi)$ are each group homomorphisms, it follows
that the cardinality of $\bigg|\mathrm{Ker~}\bigg(
Cyc_{d_1}Cyc_{d_2}(\pi)\bigg)\bigg|$ equals  $\bigg|\mathrm{Ker~}
Cyc_{d_1}(\pi)\bigg|\cdot \bigg|\mathrm{Ker~} Cyc_{d_2}(\pi)\bigg|$
.
Thus
\begin{eqnarray*} 
\prod_{d|k} ECyc_d = N_k = \bigg|\mathrm{Ker}~(1-\pi^k)\bigg| =
\bigg|\mathrm{Ker} \prod_{d|k} Cyc_d(\pi)\bigg| = \prod_{d|k}
\bigg|\mathrm{Ker} ~ Cyc_d(\pi)\bigg|,    \end{eqnarray*} and since
the last equation is true for all $k \geq 1$, we must have the
relations
\begin{eqnarray} \label{ellcyc}
ECyc_d = \bigg|\mathrm{Ker} ~ Cyc_d(\pi)\bigg|
\end{eqnarray}
for all $d \geq 1$.
\end{proof}

Since $$N_k = \prod_{d|k} ECyc_d(q,N_1)$$ and
$\mathcal{W}_k(q,t)=-N_k\bigg|_{N_1\rightarrow -t}$, it also makes
sense to consider the decomposition
$$\mathcal{W}_k(q,t) = \prod_{d|k} WCyc_d(q,t)$$ where $WCyc_d(q,t) = -ECyc_d|_{N_1 \rightarrow -t}$.

This motivates the analogous question, namely does there exist a
combinatorial or geometric interpretation of these polynomials? We
in fact can answer this in the affirmative and do so in \cite[Chap.
6]{MusThesis} as well as in a forthcoming paper.

\begin{Rem}
The coefficients of the $WCyc_d$'s are always integers, but not
necessarily positive, as seen in the constant coefficient, as well
as in the counter-example $WCyc_{105}$.  Nonetheless, plugging in
specific integers $q\geq 0$ and $t\geq 1$ do in fact result in
positive expressions, which factor $\mathcal{W}_k(q,t)$.  It is
these values that we are interested in understanding.
\end{Rem}

\section{Conclusions and open problems}

The new combinatorial formula for $N_k$ presented in this write-up
appears fruitful.  It leads one to ask how spanning trees of the
wheel graph are related to points on elliptic curves.
For instance, is there a reciprocity that explains combinatorially
why the bivariate integral polynomial formulas for counting points
on elliptic curves and counting spanning trees of the wheel graph
are equivalent except for the appearance of alternating signs?  Such
reciprocities occur frequently in combinatorics.  For example given
the chromatic polynomial $\chi(\lambda)$ of a graph $G=(V,E)$, the
expression $(-1)^{|V|}\chi(-1)$ provides a formula for the number of
acyclic orientations of $G$ \cite{Chrom}.

The fact that the Fibonacci and Lucas numbers also enter the picture
is also exciting since these numbers have so many
different combinatorial interpretations, and there is such an
extensive literature about them.  Perhaps these combinatorial
interpretations will lend insight into why $N_k$ depends only on the
finite data of $N_1$ and $q$ for an elliptic curve, and how we can
associate points over higher extension fields to points on
$C(\f_q)$.

The elliptic cyclotomic polynomials provide an additional source of
new questions. What is the spanning tree interpretation of
$\mathcal{W}_k(q,N_1)$'s factorization?  Is there a combinatorial
interpretation of $\mathcal{W}_{mk}(q,t) =
\mathcal{W}_m(q^k,\mathcal{W}_k(q,t))$? What is a combinatorial
interpretation of the integral polynomials $Q_{i,d}$, and what does
the fact their coefficients are almost all positive mean? We will
tackle some of these problems in a forthcoming paper in which we
compare more thoroughly the structures of elliptic curves and
spanning trees.

\vspace{2em} \noindent \bf Acknowledgements. \rm The author would
like to thank Adriano Garsia for many useful conversations and his
invaluable guidance through the author's graduate school.  I would
also like to thank the referees for their valuable reports in leading
to a clearer exposition. In particular, the connection to orthogonal
polynomials, including the statement of Theorem $6$, was indicated
by one of the referees, as were the observations that several of the
propositions have direct algebraic proofs. Additionally, Thomas
Shemanske brought the significance of the number $105$, with respect
to ordinary cyclotomic polynomials, to our attention. Section $2$ of
this paper was presented at FPSAC $2006$ and the author would like
to thank the conference referees for their edits. This work was
supported by the NSF, grant DMS-0500557.

\bibliographystyle{amsalpha}

\end{document}